\let\pa\partial  
\let\del\partial  
\let\na\nabla  
\let\eps\varepsilon  
\newcommand{\newln}{\\&\quad\quad{}}
\newcommand{\R}{{\mathbb R}}
\newcommand{\T}{{\mathbb T}}
\newcommand{\F}{{\mathcal F}}
\newcommand{\N}[1]{\left|#1\right|}
\newcommand{\NN}[1]{\left\|#1\right\|}
\newcommand{\velo}{u}
\newcounter{hyp}
\newtheorem{theorem}{Theorem}   
\newtheorem{lemma}[theorem]{Lemma}   
\newtheorem{proposition}[theorem]{Proposition}   
\newtheorem{remark}[theorem]{Remark}   
\newtheorem{definition}{Definition}  
\newtheorem{example}{Example} 
\begin{document}  

\title[Semiconductor Boltzmann-Dirac-Benney equation]{Global analytic solutions of the Semiconductor Boltzmann-Dirac-Benney equation with relaxation time approximation}

\author{MARCEL BRAUKHOFF}

\address{Institute for Analysis and Scientific Computing, Vienna University of  
	Technology, Wiedner Hauptstra\ss e 8--10, 1040 Wien, Austria  \\
	marcel.braukhoff@asc.tuwien.ac.at}

\date{\today}

\thanks{The authors acknowledge partial support from   
the Austrian Science Fund (FWF), grants P27352, P30000, and W1245.} 

\begin{abstract}
	
The global existence of a solution of the semiconductor Boltzmann-Dirac-Benney equation 
	\[
	\del_t f + \nabla\epsilon(p)\cdot\nabla_x f - \nabla \rho_f(x,t)\cdot\nabla_p f =   \frac{\F_\lambda(p)-f}\tau, \quad x\in\mathbb{R}^d,\ p\in B, \ t>0
	\]
 is shown for small  $\tau>0$ assuming that the initial data are analytic and sufficiently close to $\F_\lambda$. This system contains an interaction potential $\rho_f(x,t):=\int_{B}f(x,p,t)dp$ being significantly more singular than the Coulomb potential, which causes major structural difficulties in the analysis. The semiconductor Boltzmann-Dirac-Benney equation is a model for ultracold atoms trapped in an optical lattice. Hence, the dispersion relation is given by $\epsilon(p) = -\sum_{i=1}^d$  $\cos(2\pi p_i)$, $p\in B=\mathbb{T}^d$ due to the optical lattice and the Fermi-Dirac distribution $\F_\lambda(p)=1/(1+\exp(-\lambda_0-\lambda_1\eps(p)))$ describes the equilibrium of ultracold fermionic clouds.  
 
 This equation is closely related to the Vlasov-Dirac-Benney equation with $\epsilon(p)=\frac{p^2}2$, $p\in B=\mathbb R^d$ and r.h.s$.=0$, where the existence of a global solution is still an open problem.  So far, only local existence and ill-posedness results were found for theses systems.
 
The key technique is based of the ideas of Mouhot and Villani by using Gevrey-type norms which vary over time. The global existence result for small initial data is also shown for a far more general setting, namely
 \[\del_t f + Lf=Q(f),\]
 where $L$ is a generator of an $C^0$-group with $\|e^{tL}\|\leq Ce^{\omega t}$ for all $t\in\R$ and $\omega>0$ and, where further additional analytic properties of $L$ and $Q$ are assumed.
%
%
%
\end{abstract}

 

\maketitle


\section{Introduction}
The semiconductor Boltzmann-Dirac-Benney equation is a model describing ultracold atoms in an optical lattice. An optical lattice is a spatially periodic structure that is formed by interfering
optical laser beams. The interference produces an optical standing wave that may trap
neutral atoms \cite{Blo05}. The underlying experiment has been proved to be a powerful tool to study physical phenomena that occur in sold state materials. Simply speaking, a solid crystal consists of ions and electrons. Because of the mass difference, the electrons in average move much faster than the ions in a semi-classical picture. Therefore, from a modeling point of view, one may assume that the positions of the ions are fixed and form a regular periodic structure. However, comparing the theory to the experiment, one faces certain difficulties as impurities lead to defects in the periodic structure.

The experiment of ultracold atoms in an optical lattice can be considered as a physical toy-model for solid state materials. The ultracold atoms represent the electrons and the optical lattice mimics the periodic structure of the ions. The advantage of the optical lattice is the absence of impurities. Thus, one expects a better accordance of the experiment with the theory. Moreover, the dynamics of the ultracold atoms, i.e.~at a temperature of magnitude of some
nanokelvin, can be followed on the time scale of milliseconds.
This facilitates the study physical phenomena in an optical lattice being difficult to observe in solid crystals. Furthermore, they are promising candidates to realize quantum information processors \cite{Jak04} and extremely precise atomic clocks \cite{ADH15}.

 The main difference consists of the use of uncharged atoms, whereas electrons are negatively charged. Ultracold fermions may be described with a Fermi-Hubbard model with a Hamiltonian that is a result of the lattice potential created by interfering laser beams and short-ranged
 collisions \cite{DGH15}. They assume that the ultracold atoms interact only with their nearest neighbors. For more details see \cite{Man12}. 
 
 In this article we are focusing on a semi-classical picture which is able to model qualitatively the observed cloud shapes \cite{SHR12}. The effective dynamics are modeled by a Boltzmann transport equation describing the microscopic particle density $f=f(x,p,t)$, where $x\in\mathbb R^d$ is the position, $p\in B$ the momentum and $t\geq0$ the time. In the prototype case, we assume that the potential forms a simple cubic lattice. Here, we identify the first Brillouin zone $B:=[0,2\pi)^d\subset \R^d$ with the torus $\T^d$. The band energy $\eps(p)$ is given by the periodic dispersion relation
\begin{equation*}
\eps(p) = -2\eps_0\sum_{i=1}^d \cos(2\pi p_i), \quad p\in\T^d.
\end{equation*}
The constant $\eps_0$ is a measure for the tunneling rate of a particle from one
lattice site to a neighboring one.  This dispersion relation also occurs as an approximation for the lowest energy band in semiconductors (see \cite{AsMe77}). Let $\rho_f:=\int_{\mathbb T^d}fdp$ be the macroscopic particle density. The interaction potential is given by $V_f=-U\rho_f$, where  $U>0$ models the strength of the on-site interaction between spin-up and spin-down components \cite{SHR12}. 

Finally, the semiconductor Boltzmann-Dirac-Benney equation is given by
\begin{equation}\label{1.be}
\pa_t f + \nabla\eps(p)\cdot\na_x f - U\na_x \rho_f\cdot\na_p f = Q(f),
\end{equation}
where $Q(f)$ is a collision operator. There are several choices for the collision operator. The natural choice of the collision operator is a two particle collision operator neglecting the three or more particle scattering 
\begin{multline*}
Q_{ee}(g)(p):= \sum_{G\in2\pi \mathbb Z^d}\int\limits_{\mathbb T^d}\int\limits_{\substack{p_{\mathrm{tot}}(\mathbf{p})=G\\\eps_{\mathrm{tot}}(\mathbf{p})=0 }}\!\!Z(\mathbf{p})\bigg(
g(p)g(p')(1-\eta g(p''))(1-\eta g(p'''))\\-g(p'')g(p''')(1-\eta g(p))(1-\eta g(p'))
\bigg)\frac{d\mathcal H_{p''}^{d-1}}{\N{\nabla_{p''}\eps_{\mathrm{tot}}(\mathbf{p})}}dp'.
\end{multline*}
for some $\eta\geq0$, where $\mathbf{p}=(p,p',p'',p''')$ and $\mathcal H^{d-1}_{p''}$ denotes the $d -1$ dimensional Hausdorff measure w.r.t.~$p''$.
The function $Z(\mathbf{p})$ models the probability of a scattering event from state $(p,p_1)$ to the state $(p_2,p_3)$. Moreover, the total change of momentum and energy are denoted by 
\[p_{\mathrm{tot}}(\mathbf{p}):= p+p'-p''-p'''\quad\mbox{and}\quad \eps_{\mathrm{tot}}(\mathbf{p})=\eps(p)+\eps(p')-\eps(p'')-\eps(p'''), \]
respectively. The sum over $G$ runs over all reciprocal lattice vectors $G\in 2\pi \mathbb Z^d$. Note that in fact only finite summands contribute to the sum since $p_{\mathrm{tot}}$ is bounded. This scattering operator is also well-known as the electron-electron scattering operator \cite{BeDe96}.

Comparing the semiconductor Boltzmann-Dirac-Benney equation to the semiconductor Boltzmann
equation with Coulomb interaction, there are two major differences. First, the band energy $\epsilon$ is a bounded function in contrast to the parabolic band approximation $\eps(p)=\frac12|p|^2$, which is usually assumed \cite{Jue09}. Second, the potential $V_f$ is proportional to the macroscopic particle density $\rho_f=\int_{\T^d}fdp$. In semiconductor physics, the interaction potential $\Phi_f$ between the electrons is often modeled by the Coulomb potential \cite{Jue09}. Hence, $\Phi_f$ is determined self-consistently from the Poisson equation $-\Delta \Phi_f=\rho_f$ and therefore much more regular that $V_f$.

\subsection*{Fermi-Dirac distribution}
Due to the complexity of the two particle scattering operator, the analysis of \eqref{1.be} with $Q=Q_{ee}$ is very difficult. Therefore, we search for a less complicated physical approximation of $Q_{ee}$. In \cite{Jue09}, J\"ungel proves in Proposition 4.6 that the zero set of $Q_{ee}$ consists of Fermi-Dirac distribution functions, i.e. it holds formally that $Q_{ee}(g)=0$ if and only if there exists a $\lambda=(\lambda_0,\lambda_1)\in\R^2$ with
\begin{equation*}
g(p)=\F_\lambda(p):= \frac{1}{\eta+e^{-\lambda_0-\lambda_1\eps(p)}}.
\end{equation*}
Hence, $\F_\lambda$ annihilates the collision operator and can be seen as an equilibrium distribution. For $\eta=1$, we obtain the Fermi-Dirac distribution, while for
$\eta=0$, $\F_\lambda$ equals the Maxwell-Boltzmann distribution. The parameter $\lambda_0,\lambda_1$ are sometimes called entropy parameters, where physically $-\lambda_1$ equals the inverse temperature and $-\lambda_0/\lambda_1$ the chemical potential.

Note that we have assumed a bounded band energy. This implies that the equilibrium $\F_\lambda$ is integrable w.r.t.~$p$ even if $\lambda_1>0$, which means that the absolute temperature may be negative.
In fact, negative absolute temperature can be realized in experiments with
ultracold atoms \cite{RMR10}. Negative temperatures occur in
equilibrated (quantum) systems that are characterized by an inverted population of
energy states. The thermodynamical implications of negative temperatures 
are discussed in \cite{Ram56}.
 
\subsection*{Relaxation time approximation}
 The idea of the relaxation time approximation is to assume that the collision operator drives the solution into the equilibrium. We define
\begin{equation*}
Q(g)(p):=\frac{\F_\lambda(p)-g(p)}{\tau}
\end{equation*}
for some $\lambda\in \R^2$, $\tau>0$ and $g=g(p)$ being a heuristic approximation of $Q_{ee}$ \cite{AsMe77}. The parameter $\tau$ is called the relaxation time and represents the average time between two scattering events. Since $\F_\lambda$ is a fixed function, the relaxation time approximation collision operator neither conserves the local particle nor the local energy. The simplest version of the relaxation time approximation is to assume that $\lambda_1$ vanishes. Then, $\F_{\lambda_0,0}$ equals a constant $\overline{\rho}\in[0,1/\eta]$.

\subsection*{Known results}
In a previous paper \cite{Bra18}, the semiconductor Boltzmann-Dirac-Benney equation is investigated with a BGK-type collision operator
\begin{equation}\label{eq.Qf.BGK}
Q_{BGK}(f) = \frac{\rho_f(1-\eta \rho_f)}{\tau}(F_f-f),
\end{equation}
where $\tau>0$ is the relaxation time
and $F_f$ is determined by 
$$
F_f(x,p,t) = \frac{1}{\eta + e^{-\bar\lambda_0(x,t)-\bar\lambda_1(x,t)\eps(p)}},
\quad x\in\R^d,\ p\in\T^d,\ t>0,
$$
where $(\bar\lambda_0,\bar\lambda_1)$ are the Lagrange multipliers resulting from the local 
mass and energy conservation constraints, i.e.
$$
\int_{\T^d}(F_f-f)dp=0,\quad\int_{\T^d}(F_f-f)\eps(p)dp=0.
$$
In \cite{Bra18}, it is shown that \eqref{1.be} with $Q=Q_{BGK}$ is ill-posed in the following sense.

	Let $k\in\mathbb N$, $\theta>0$ and $\gamma>0$, $U\neq0$. There exist ${\lambda}\in\R^2$ and a time $\tau>0$ and such that there exist solutions $f_\delta:\R^d_x\times\T^d_p\times[0,\tau]\to [1,\eta^{-1}]$ of \eqref{1.be} with $Q=Q_{BGK}$ such that
	
	\begin{equation*}
	\lim_{\delta\to0}\frac{\NN{f_\delta(\cdot,\cdot,t)-\F_\lambda}_{L^1(B_\delta(x,p))}}{\NN{f_\delta(\cdot,\cdot,0)-\F_\lambda}^\theta_{W^{1,k}(\R^3\times\T^3)}}=\infty\quad\mbox{for all }x\in\R^d,p\in\T^d,t\in(0,\tau).
	\end{equation*}
	 A sufficient condition for the critical $\bar{\lambda}$ is given in \cite{Bra18} by
	\begin{equation*}
	1<U \lambda_1\int_{\T^d}\F_\lambda(p)(1-\eta \F_\lambda(p))dp.
	\end{equation*}
%
 
This result reflects the theory of the Vlasov-Dirac-Benney equation with is the counterpart of the semiconductor Boltzmann-Dirac-Benney equation for free particle without collisions, i.e. with $\epsilon(p)=\frac12|p|^2$ and $Q(f)=0$.

The Vlasov-Dirac-Benney equation is therefore given by
\begin{equation}\label{eq: intro: Vlasov3}
\begin{aligned}
\pa_t f+ p\cdot\na_x f-\na\rho_f(x,t)\cdot\na_p f &=0
\end{aligned}
\end{equation}
for $x\in\R^d,p\in\R^d$ and $t >0$. In spatial dimension one, this equation can be used to describe the density of a fusion plasma in a strong magnetic field in direction of the field \cite{BaNo12}.

The Vlasov-Dirac-Benney equation is a limit of a scaled non-linear Schr\"odinger equation \cite{BaBe16}. 
Comparing the standard Vlasov-Poisson equation, we see that the interaction potential $\Phi_f:=-\frac{1}{|x|}*\rho_f$ is long ranged by means of that the support  of the kernel $1/|x|$ is the whole space. The interaction potential of the Vlasov-Dirac-Benney equation can be rewritten using the $\delta$ distribution as $V_f:=-U\rho_f=-U\delta_0*\rho_f$. Therefore $V_f$ is called  a short-ranged Dirac potential, which motivated the ``Dirac" in the name of the Vlasov-Dirac-Benney equation \cite{BaBe13}. The name Benney is due to its relation to the Benney equation in dimension one (for details see \cite{BaBe13}). Moreover, the Vlasov-Dirac-Benney equation can also be derived by a quasi-neutral limit of the Vlasov-Poisson equation \cite{HaRo16}.

The Vlasov-Dirac-Benney equation first appeared in \cite{JaNo11}, where  only local in time solvability was shown for analytic initial data in spatial dimension one. In \cite{BaBe13}, Bardos and Besse show that this system is not locally weakly $(H^m-H^1)$ well-posed in the sense of Hadamard. Moreover, the Vlasov-Dirac-Benney equation is actually ill-posed in $d=3$, requiring that the spatial domain is restricted to the $3$-dimensional torus $\T^3$ \cite{HaNg15}: the flow of solutions does not belong to $C^\alpha(H^{s,m}(\R^3\times\T^3),L^2(\R^3\times\T^3))$ for any $s\geq0,\alpha\in(0,1]$ and $m\in\mathbb N_0$. Here, $H^{s,m}(\R^3\times\T^3)$ denotes the weighted Sobolev space of order $s$ with weight $(x,\velo)\mapsto \langle\velo\rangle^m:=(1+\N{\velo}^2)^{m/2}$. 
More precisely, \cite{HaNg15} provides a stationary solution $\mu=\mu(\velo)$ of \eqref{eq: intro: Vlasov3} and a family of solutions $(f_\varepsilon)_{\varepsilon>0}$, times $t_\varepsilon= O(\varepsilon\N{\log\varepsilon})$ and $(x_0,\velo_0)\in\T^3\times\R^3$ such that

\begin{equation}\label{kwan}
\lim_{\varepsilon\to0}\frac{\NN{f_\varepsilon-\mu}_{L^2([0,t_\varepsilon]\times B_\varepsilon(x_0)\times B_\epsilon(\velo_0))}}{\NN{\langle\velo\rangle^m(f_\varepsilon|_{t=0}-\mu)}^\alpha_{H^s(\T^3_x\times\R^3_\velo)}}=\infty,
\end{equation}
where $B_\varepsilon(x_0)$ denotes the ball with radius $\varepsilon$ centered at $x_0$. 

These results show the main difference between the well-posed Vlasov-Poisson equation and the Vlasov-Dirac-Benney equation. 

 In \cite{HaRo16}, Han-Kwan and Rousset  consider the quasi-neutral limit of the Vlasov-Poisson equation. By proving uniform estimates on the solution of the scaled Vlasov-Poisson equation the show that the scaled solution converges to a unique local solution $f\in C([0,T], H^{2m-1,2r}(\R^3\times\T^3))$ of the Vlasov-Dirac-Benney equation. For this, they require that the initial data $f_0\in  H^{2m,2r}(\R^3\times\T^3)$ satisfies the Penrose stability condition 

\begin{equation*}
\inf_{x\in\mathbb T^d}\inf_{(\gamma,\tau,\eta)\in(0,\infty)\times\mathbb R\times\mathbb R^d\setminus\{0\}}\bigg|1-\int_0^\infty e^{-(\gamma+i\tau)s}\frac{i\eta}{1+|\eta|^2}\cdot\mathcal (F_v\nabla vf)(x,\eta s)ds\bigg|>0,
\end{equation*}
where $\mathcal{F}_v$ denotes the Fourier Transform in $v$.

 Note that the Vlasov-Dirac-Benney equation embeds into a larger class of ill-posed equation: Han-Kwan and Nguyen write Eq.~\eqref{eq: intro: Vlasov3} as a particular case of 
 \[\pa_t f+ L f= Q(f,f),\quad x\in \T^d, z\in\Omega\]
 in which $L$ (resp.~$Q$) is a is a linear (resp.~bilinear) integro-differential operator in $(x,z)$ and $\Omega$ is a open subset of $\R^k$ \cite{HaNg15}. They also state a version of \eqref{kwan} for the generalized setting by using the techniques of \cite{Met05}.
 
 \subsection{Main results}
 
 The semiconductor Boltzmann-Dirac-Benney equation and the Vla\-sov-Dirac-Benney equation have only been treated locally so far. A global existence result is still missing. The aim of this article is to show that the semiconductor Boltzmann-Dirac-Benney equation admits global solutions if we use the relaxation time approximation, namely
 \begin{equation}\label{2.be}
 \pa_t f + \nabla\eps(p)\cdot\na_x f - U\na_x \rho_f\cdot\na_p f = \frac{\F_\lambda(p)-f}{\tau}.
 \end{equation}
  For this we require analytic initial data being close to the Fermi-Dirac distribution \[\mathcal F_\lambda(p)=\frac1{\eta+e^{-\lambda_0-\lambda_1\eps(p)}},\quad p\in \mathbb T^d.\]
   This is due to the singular short ranged potential.
\begin{theorem}\label{thm1}
	Let $\lambda\in\mathbb R^2_+$, $U>0$ and $k\in\mathbb N$.	Then there exist $\tau_0,\varepsilon,\nu>0$ such that if $f_0\in S(\mathbb R^d\times\mathbb T^d)$ satisfies
	\begin{equation}\label{con.on.f_0.7}
	\sum_{\alpha,\beta\in\mathbb N_0}\frac{\nu^{{\alpha+\beta}}}{\alpha!\beta!}\|\partial_x^\alpha\partial_p^\beta(f_0-\mathcal F_\lambda)\|_{L^2(\mathbb R^d\times\mathbb T^d)}\leq \varepsilon 
	\end{equation} 
	 then for all $\tau\in(0,\tau_0)$, \eqref{2.be} has a unique global analytic solution with $f|_{t=0}=f_0$ satisfying
	 	\begin{equation*}
	 	\|f(t)-\mathcal F_\lambda\|_{H^k_xL^2_p}\leq Ce^{-(\frac1\tau-\frac{1}{\tau_0})t}\quad\mbox{for all }t\geq0
	 	\end{equation*}
	 	for some $C>0$. 	Moreover, for all $f_0,\tilde f_0\in S(\mathbb R^d\times\mathbb T^d)$ satisfying \eqref{con.on.f_0.7}, we have 
	 	\begin{equation*}
	 	\|f(t)-\tilde f(t)\|_{H^k_xL^2_p}\leq Ce^{-(\frac1\tau-\frac{1}{\tau_0})t}\sum_{\alpha,\beta\in\mathbb N_0}\frac{\nu^{{\alpha+\beta}}}{\alpha!\beta!}\|\partial_x^\alpha\partial_p^\beta(f_0- \tilde f_0)\|_{L^2(\mathbb R^d\times\mathbb T^d)}\quad\mbox{for all }t\geq0
	 	\end{equation*}
	 	for some $C>0$, where $f,\tilde f$ are the solution of \eqref{2.be} with $f(0)=f_0$ and $\tilde f(0)=\tilde f_0$, respectively.
\end{theorem}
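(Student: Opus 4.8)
The plan is to linearize \eqref{2.be} around $\F_\lambda$, put it into the abstract form $\pa_t g+Lg+\tfrac1\tau g=Q(g)$, and then run a Mouhot--Villani-type argument in a scale of analytic norms whose radius decreases in time. Setting $f=\F_\lambda+g$ and using that $\F_\lambda$ depends only on $p$ (so $\na_x\F_\lambda=0$ and $\na_x\rho_f=\na_x\rho_g$), equation \eqref{2.be} becomes
\[
\pa_t g+Lg+\tfrac1\tau g=Q(g),\qquad Lg:=\na\eps(p)\cdot\na_x g-U\na_x\rho_g\cdot\na_p\F_\lambda,\quad Q(g):=U\na_x\rho_g\cdot\na_p g,
\]
with $\rho_g=\int_{\T^d}g\,dp$. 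First I would record the analytic structure: $\eps$ is a trigonometric polynomial and $\F_\lambda$ a fixed analytic function, so $\na\eps$, $\na_p\F_\lambda$ and all of their derivatives are globally bounded, while $g\mapsto\na_x\rho_g$ is the only genuinely singular operation, losing exactly one $x$-derivative because integrating out $p$ does not regularize in $x$. I would then verify the abstract hypotheses: the transport part of $L$ is skew-adjoint on $L^2$ and generates a $C^0$-group, while the linearized-potential part is a one-derivative-loss perturbation compatible with the analytic scale, so that $\NN{e^{tL}}\le Ce^{\omega t}$ in the scale with a growth rate $\omega=\omega(U,\lambda,d)$, and $Q$ is a bounded bilinear map losing one derivative.

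Next I would introduce the time-dependent analytic norm and its derivative-enhanced companion,
\[
\NN{g}_{\mu}:=\sum_{\alpha,\beta\in\mathbb N_0}\frac{\mu^{\alpha+\beta}}{\alpha!\beta!}\NN{\pa_x^\alpha\pa_p^\beta g}_{L^2},\qquad \NN{g}'_{\mu}:=\frac{d}{d\mu}\NN{g}_{\mu}=\sum_{\alpha,\beta}\frac{(\alpha+\beta)\,\mu^{\alpha+\beta-1}}{\alpha!\beta!}\NN{\pa_x^\alpha\pa_p^\beta g}_{L^2},
\]
together with a decreasing radius $\nu(t)$ with $\nu(0)\le\nu$ and $\nu(t)\downarrow\nu_\infty>0$. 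Differentiating $t\mapsto\NN{g(t)}_{\nu(t)}$ and using the identity $\NN{\pa g}_{\mu}=\NN{g}'_{\mu}$ (which turns every lost derivative into the enhanced norm) together with the algebra property of $\NN{\cdot}_{\mu}$, I expect a differential inequality of the form
\[
\frac{d}{dt}\NN{g}_{\nu(t)}\ \le\ \Big(\omega-\tfrac1\tau\Big)\NN{g}_{\nu(t)}+\Big(\dot\nu(t)-\tfrac1\tau+C_L+C_Q\NN{g}_{\nu(t)}\Big)\NN{g}'_{\nu(t)},
\]
where the relaxation contributes $-\tfrac1\tau$ to both norms since it damps every derivative equally, the transport commutators and the linearized potential produce the loss constant $C_L$, and $Q$ produces $C_Q\NN{g}_{\nu(t)}\NN{g}'_{\nu(t)}$.

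I would then close the estimate by a continuity argument. Setting $\tau_0:=1/\omega$ and taking $\tau\in(0,\tau_0)$ makes the first factor $\omega-1/\tau<0$, while choosing $\varepsilon$ and the shrinking rate of $\nu$ small enough that $\dot\nu-\tfrac1\tau+C_L+C_Q\varepsilon\le0$ on the bootstrap set $\{\NN{g}_{\nu(t)}\le\varepsilon\}$ renders the coefficient of $\NN{g}'_{\nu(t)}$ nonpositive, so that this term may be discarded and
\[
\frac{d}{dt}\NN{g}_{\nu(t)}\le\Big(\tfrac1{\tau_0}-\tfrac1\tau\Big)\NN{g}_{\nu(t)}\quad\Longrightarrow\quad \NN{g(t)}_{\nu(t)}\le e^{-(\frac1\tau-\frac1{\tau_0})t}\NN{g_0}_{\nu(0)}\le\varepsilon.
\]
This simultaneously closes the bootstrap and yields the stated decay, the $H^k_xL^2_p$ bound following because $\NN{\cdot}_{\nu_\infty}$ dominates the $H^k_xL^2_p$-norm. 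The main obstacle is precisely the one-derivative loss of the singular interaction $g\mapsto\na_x\rho_g$: unlike the Coulomb case it makes $L$ unbounded and $Q$ non-Lipschitz on every fixed space, so the dynamics only make sense in the analytic scale, and the scheme closes only because the strong relaxation both dominates the finite linear growth rate $\omega$ (forcing $\tau<\tau_0=1/\omega$) and damps all derivatives, keeping the consumption of radius $\int_0^\infty\NN{g}'_{\nu(t)}\,dt$ finite so that $\nu_\infty>0$ and analyticity persists for all time rather than only locally.

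Finally, uniqueness and the Lipschitz estimate follow by running the identical argument on the difference $g-\tilde g$ of two solutions, whose equation differs from the one above only in that $Q(g)$ is replaced by the bilinear difference $U\na_x\rho_{g-\tilde g}\cdot\na_p g+U\na_x\rho_{\tilde g}\cdot\na_p(g-\tilde g)$; since $\NN{g}_{\nu(t)},\NN{\tilde g}_{\nu(t)}\le\varepsilon$, the same choice of $\tau_0,\varepsilon,\nu$ gives $\NN{(g-\tilde g)(t)}_{\nu(t)}\le e^{-(\frac1\tau-\frac1{\tau_0})t}\NN{g_0-\tilde g_0}_{\nu(0)}$, which is the claimed continuous dependence.
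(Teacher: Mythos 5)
Your overall strategy --- Gevrey-type norms with a time-decreasing radius in the spirit of Mouhot--Villani, closed by a bootstrap that trades analyticity radius against the strong relaxation --- is the same as the paper's (which reduces Theorem \ref{thm1} to Theorem \ref{thm.diracbenny1} and then to the abstract fixed-point Theorem \ref{main.thm1}). However, two of your steps have genuine gaps.

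First, you assert that $Lg=\nabla\eps\cdot\nabla_xg-U\nabla_x\rho_g\cdot\nabla_p\F_\lambda$ generates a $C^0$-group with $\NN{e^{tL}}\le Ce^{\omega t}$ for some finite $\omega=\omega(U,\lambda,d)$, on the grounds that the potential part is a ``one-derivative-loss perturbation compatible with the analytic scale.'' This is exactly the point that cannot be waved through: a perturbation losing a derivative does not generate a group on any fixed space, and for this very operator the high-frequency growth rate of the linearized flow is unbounded whenever a Penrose-type stability condition fails --- that is the ill-posedness mechanism of \cite{Bra18} and \cite{HaNg15} recalled in the introduction, so no finite $\omega$ can exist in general. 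The paper's Lemma \ref{lem.L.generator} proves that $L$ is in fact skew-adjoint, hence generates a \emph{contraction} group, but only with respect to the weighted inner product $\langle f,g\rangle_0=\int fg\,\frac{dp\,dx}{\F_\lambda(1-\eta\F_\lambda)}+U\lambda_1\int\rho_f\rho_g\,dx$, and the resolvent computation there succeeds precisely because $\lambda_1\ge0$ makes the relevant Penrose integral nonnegative. Your argument never invokes the hypothesis $\lambda\in\R^2_+$, which signals that the essential stability input is missing.

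Second, your key differential inequality credits the relaxation with a term $-\frac1\tau\NN{g}'_{\nu(t)}$ inside the coefficient of $\NN{g}'_{\nu(t)}$. It does not produce one: $-g/\tau$ contributes $-\frac1\tau\NN{g}_{\nu(t)}$ to $\frac{d}{dt}\NN{g}_{\nu(t)}$ and nothing to the $\NN{g}'$-coefficient, since it damps each term of the sum by the same factor rather than more strongly at higher order. Without that spurious term your closing condition degenerates to $\dot\nu\le-(C_L+C_Q\varepsilon)$, which exhausts the radius in finite time and recovers only the known local result; in particular $\nu(t)\downarrow\nu_\infty>0$ is not attainable (in the paper $\nu(t)=\nu_0e^{-\mu t}\to0$, which still suffices for the $H^k_xL^2_p$ conclusion since $\NN{\cdot}_X\le\NN{\cdot}_{X^\nu_t}$ for every $\nu\ge0$). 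The correct accounting, implemented in the paper after the substitution $g=e^{t/\tau}(f-\F_\lambda)$, is that (i) the linear derivative loss comes only from the commutators $[L,\partial_{p_i}]$ and is proportional to $\nu$ itself (Lemma \ref{lem.R_N.leq.nu.Q_N}), so an exponentially decaying radius absorbs it for all time, and (ii) the quadratic term carries the explicit factor $e^{-t/\tau}$, so its total radius consumption is $O(\varepsilon\tau)$. Consequently $\tau_0$ is determined by requiring $1/\tau$ to exceed the Gevrey loss rate $\mu_0$ of the commutators (plus the constant $\delta$ of the auxiliary norm), not by beating a growth rate of $e^{tL}$ --- there is none, the group being a contraction.
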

We can also improve this result and obtain a better estimate for the solution $f$. For this, however, we require different spaces.
	\begin{definition}
		Let $\mathcal S(\mathbb R^d\times\mathbb T^d)$ and $C^\infty_b(\mathbb R^d\times\mathbb T^d)$ be the  Schwartz space and the space of bounded smooth functions, respectively.
	\begin{itemize}
		\item For  $\lambda\in \mathbb R^2_+:=\{(x,y)\in\mathbb R^2:y\geq0\}$, let $k\in\mathbb N$,  $k>\frac{d}{2}$. We define $Y:=\mathcal S(\mathbb R^d\times\mathbb T^d)$ and $X:= H^k_xL^2_p(\mathbb R^d_x\times\mathbb T^d_p)$ equipped with the scalar product
		\begin{equation*}
		\langle f,g\rangle_X:=\sum_{|\alpha|\leq k}\langle \partial_x^\alpha f,\partial_x^\alpha g\rangle_0,
		\end{equation*}
		where 
		\begin{equation*}
		\langle f,g \rangle_0:=
		\int_{\mathbb R^d}\int_{\mathbb T^d} f(x,p) g(x,p)\frac{dpdx}{\mathcal F_\lambda(p)(1-\eta \mathcal F_\lambda(p))}+U\lambda_1\int_{\mathbb R^d}\rho_f\rho_g dx
		\end{equation*}
		and $\rho_f(x):=\int_{\mathbb T^d}f(x,p)dp$.
		\item For $\lambda_1=0$, we can alternatively define $Y:= C^\infty_b(\mathbb R^d\times\mathbb T^d)$ and $X:= C^0_b(\mathbb R^d\times\mathbb T^d)$.
	\end{itemize} 
\end{definition}
\begin{definition}\label{def2}
	Let $\nu\in [0,\infty)$. We define
	\begin{equation*}
	\NN{\phi}_{Y^{\nu}_{t}}:= \sum_{|a+b|\leq1}\sum_{\alpha,\beta\in\mathbb N_0^n} \frac{\nu^{|\alpha+\beta|}}{\alpha!\beta!}\NN{e^{tL}\partial_x^{\alpha+a}\partial_p^{\beta+b}e^{-tL}\phi}_{X}
	\end{equation*}
	for $\phi\in Y$ and $t\in \R$, where $e^{tL}$ is generated by
	\begin{equation*}
	Lf(x,p):=\nabla\eps(p)\cdot\nabla_xf(x,p)-U\nabla_x\int_{\mathbb T^d}f(x,p')dp'\cdot\nabla_p\mathcal F_\lambda(p).
	\end{equation*}
	We show in Lemma \ref{lem.L.generator} that this is well-defined.
\end{definition}
\begin{theorem}\label{thm.diracbenny1}
	Let $\lambda\in\mathbb R^2_+$, $U>0$.	Then there exist $\tau_0,\varepsilon,\nu_0>0$ such that if 
	\begin{equation}\label{con.on.u_0.6}
	\|f_0-\mathcal F_\lambda\|_{Y_0^{\nu}}\leq \varepsilon \nu
	\end{equation} 
	for some $\nu\leq \nu_0$, then \eqref{2.be} has a unique global analytic solution $f$ with $f|_{t=0}=f_0$ for all $\tau\in(0,\tau_0)$. The solution satisfies 
	\begin{equation*}
	\|f(t)-\mathcal F_\lambda\|_{Y_t^{\nu \exp({-\frac t{\tau}})}}\leq 2\varepsilon\nu e^{-(\frac1\tau-\frac{1}{\tau_0}) t}\quad\mbox{for all }t\geq0.
	\end{equation*}
	 Moreover, for all $f_0,\tilde f_0\in Y$ satisfying \eqref{con.on.u_0.6}, we have 
	\begin{equation*}
	\|f(t)-\tilde f(t)\|_{Y_t^{\nu \exp({-\frac t{\tau}})}}\leq 2e^{-(\frac1\tau-\frac{1}{\tau_0}) t}\|f_0-\tilde f_0\|_{Y_0^{\nu}}\quad\mbox{for all }t\geq0
	\end{equation*}
	where $f,\tilde f$ are the solution of \eqref{2.be} with $f(0)=f_0$ and $\tilde f(0)=\tilde f_0$, respectively.
\end{theorem}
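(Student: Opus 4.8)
\noindent\emph{Proposed approach.}
The plan is to pass to the perturbation $g:=f-\mathcal F_\lambda$ and to read \eqref{2.be} as an instance of the abstract equation $\partial_t f+Lf=Q(f)$ announced in the introduction. Since $\mathcal F_\lambda$ depends only on $p$, one has $\nabla_x\rho_{\mathcal F_\lambda}=0$ and $\nabla_x\rho_f=\nabla_x\rho_g$, so inserting $f=\mathcal F_\lambda+g$ and isolating the genuinely quadratic contribution turns \eqref{2.be} into
\begin{equation*}
\partial_t g+Lg+\tfrac1\tau g=Q(g),\qquad Q(g):=U\,\nabla_x\rho_g\cdot\nabla_p g,
\end{equation*}
with $L$ exactly the generator of Definition \ref{def2}. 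Thus the relaxation becomes a pure damping $-\tfrac1\tau g$, the whole linearised dynamics is collected in $L$, and the only nonlinearity is the bilinear term $Q$, which loses precisely one derivative because the potential $\nabla_x\rho_g$ carries an $x$-derivative of a quantity that is merely an integral in $p$ and enjoys no smoothing. From Lemma \ref{lem.L.generator} I would take that $L$ generates a $C^0$-group with $\|e^{tL}\|_X\le Ce^{\omega t}$ in the weighted inner product $\langle\cdot,\cdot\rangle_0$, and that the seminorms $Y^\nu_t$ are well defined.

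\smallskip
The core is a single differential inequality for $N(t):=\|g(t)\|_{Y^{\nu(t)}_t}$ along the shrinking radius $\nu(t)=\nu e^{-t/\tau}$. Writing $\Phi_{\alpha\beta ab}(t):=e^{tL}\partial_x^{\alpha+a}\partial_p^{\beta+b}e^{-tL}g(t)$ for the conjugated ``gliding'' derivatives entering the norm, I would differentiate each $\|\Phi_{\alpha\beta ab}\|_X$ using $\partial_t g=-Lg-\tfrac1\tau g+Q(g)$ and track three effects. First, the damping yields the favourable term $-\tfrac1\tau N$. Second, the time dependence of the conjugation $e^{tL}\partial e^{-tL}$ together with the quantity $\langle L\Phi,\Phi\rangle_X$ is controlled by the group bound and by the skew structure built into $\langle\cdot,\cdot\rangle_0$, contributing at most $+\omega N$. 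Third, differentiating the weight $\nu(t)^{|\alpha+\beta|}$ produces the strictly negative term $\tfrac{\dot\nu(t)}{\nu(t)}\,\partial_\nu(\text{norm})=-\tfrac1\tau D(t)$, where $D(t)$ denotes the derivative-weighted companion of $N(t)$; this is the term reserved for absorbing the derivative loss of $Q$.

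\smallskip
The decisive step is the bilinear estimate for $Q$ in the conjugated Gevrey scale. Since the norm $Y^\nu_t$ is tame --- it already controls one derivative beyond the base multi-index through the sum over $|a+b|\le1$ --- a Leibniz expansion performed on the gliding derivatives, combined with the submultiplicativity of the weights $\nu^{|\alpha+\beta|}/(\alpha!\beta!)$, should bound $\|Q(g)\|_{Y^\nu_t}$ by the norm $N(t)$ times a factor in which the single lost derivative is weighted by exactly one power of the shrinking radius, i.e. by a quantity comparable to $\tfrac1{\nu}D(t)$. Feeding this into the inequality of the previous paragraph, the derivative-loss contribution is meant to be swallowed by the reserved term $-\tfrac1\tau D(t)$ as long as $N(t)$ is small relative to $\nu(t)$, leaving
\begin{equation*}
\tfrac{d}{dt}N(t)\le-\Big(\tfrac1\tau-\tfrac1{\tau_0}\Big)N(t),
\end{equation*}
once $\tau_0$ is fixed so that $\tfrac1{\tau_0}$ dominates $\omega$ and the residual nonlinear constant. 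A continuity (bootstrap) argument, started from $N(0)\le\varepsilon\nu$ granted by \eqref{con.on.u_0.6}, then propagates $N(t)\le 2\varepsilon\nu\,e^{-(1/\tau-1/\tau_0)t}$ for all $t\ge0$, which is exactly the asserted decay; finiteness of $N$ for every $t$ furnishes global analyticity.

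\smallskip
For existence and uniqueness I would run the same a priori estimate on a Galerkin or Picard approximation in the analytic scale and pass to the limit, the uniform bound upgrading local to global solvability. Continuous dependence I would obtain by subtracting the equations for two solutions: the difference $w=f-\tilde f$ solves a linear problem with the same damping $-\tfrac1\tau w$ and a forcing bilinear in $w$ and the (small) data, so the identical energy estimate --- now linear in $\|w\|_{Y^{\nu(t)}_t}$ --- gives the stated Lipschitz bound $\|w(t)\|_{Y^{\nu(t)}_t}\le 2e^{-(1/\tau-1/\tau_0)t}\|w(0)\|_{Y^\nu_0}$. The hard part will be the bilinear estimate together with its bookkeeping: one must show that the gliding derivatives $e^{tL}\partial e^{-tL}$ obey a usable Leibniz/commutator calculus even though $e^{tL}$ is not a mere change of variables (it carries the nonlocal potential $-U\nabla_x\rho\cdot\nabla_p\mathcal F_\lambda$), and one must verify that the single derivative lost to the singular potential is matched termwise by exactly one power of $\nu(t)$ so that the combined damping/shrinking/nonlinear inequality truly closes for every $t\ge0$ --- including the delicate regime where $\nu(t)=\nu e^{-t/\tau}\to0$. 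This is precisely the Mouhot--Villani mechanism, and reconciling all the constants with the constraint $\tau<\tau_0$ is the crux.
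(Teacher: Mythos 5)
Your overall architecture is the one the paper actually uses: conjugated (``gliding'') derivatives $e^{tL}\partial_x^\alpha\partial_p^\beta e^{-tL}$, a Gevrey norm whose radius $\nu(t)$ shrinks exponentially, and absorption of the one derivative lost to the singular potential by the term produced by $\dot\nu<0$. The paper packages this as the abstract Theorem \ref{main.thm1} (hypotheses (H2')--(H4'), Proposition \ref{prop.time.estimate}, Lemma \ref{lem.norm.estimate}) and obtains existence by a Banach fixed point for $\Phi(u)=u(0)+\int_0^tQ_{\tau L}(u)\,d\tau$ on the ball $Z_R$, rather than by a Galerkin scheme plus bootstrap; it also substitutes $g=e^{t/\tau}(f-\mathcal F_\lambda)$, so the damping appears as the prefactor $e^{-t/\tau}$ on the bilinear term rather than as $-\tfrac1\tau g$, and the final exponent $\tfrac1\tau-\tfrac1{\tau_0}$ arises from converting between the auxiliary norm $\|\cdot\|_{X_t}=e^{-\delta t}\|\cdot\|_{Y_t}$ (with $\delta=Cr\le 1/\tau_0$) and the norm $Y_t^\nu$ in the statement. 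These are cosmetic differences; the mechanism is the same.

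There is, however, one concrete step in your sketch that would fail as written: you assert that the time dependence of the conjugation, i.e.\ the commutator $[\partial_t,\,e^{tL}A^\alpha e^{-tL}]=e^{tL}[L,A^\alpha]e^{-tL}$, ``contributes at most $+\omega N$''. It does not. By Lemma \ref{lem.L,AB}, $[L,A^\alpha]=\sum_{0\neq\gamma\le\alpha}\binom{\alpha}{\gamma}(-1)^{|\gamma|-1}L_\gamma A^{\alpha-\gamma}$, and each $L_\gamma$ (built from $\partial_p^\gamma\nabla\eps$ and $\partial_p^\gamma\nabla\mathcal F_\lambda$) is bounded only in terms of $\sum_i\|A_i\,\cdot\,\|_X$ --- it loses exactly one $x$-derivative, just like the nonlinearity. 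Consequently the commutator contribution is controlled by the derivative-weighted companion $D(t)$, not by $N(t)$ (this is Lemma \ref{lem.R_N.leq.nu.Q_N2}: the bound is $\mu_0\nu(t)\,Q_N$ with $\mu_0=nCr/(1-\nu_0 r)^n$), and it must also be absorbed by the reserved term $-\tfrac{|\dot\nu|}{\nu}D(t)$. This is precisely what forces the smallness condition $\nu_0<\tfrac1r\bigl(1-\sqrt[n]{nCr/\omega}\bigr)$ and the upper bound on $\tau_0$: the shrink rate must exceed $\mu_0$ \emph{before} any surplus is left over to control $Q$. With your accounting the budget $-\tfrac1\tau D(t)$ is spent only on the nonlinearity, and the differential inequality does not close; once the commutator is routed to the $D$-term as in Proposition \ref{prop.time.estimate}, the rest of your argument goes through.
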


As in \cite{HaNg15}, we can generalize these results to a more abstract setting. Let $X$ be a Banach space and $Y\subset X$ be dense. Moreover, let $A=(A_1,\ldots,A_n):D(A)\subset X\to X^n$ be a linear operator with $Y\in D(A)$ such that $A(Y)\subset Y^n$ and $[A_i,A_j]=0$ for all $i,j=1,\ldots,n$. For $x_0\in D(A)$, we consider the non-linear Cauchy-problem
\begin{equation}\label{d_tx=F}
\del_t x=F(x),\quad\mbox{with}\quad x(0)=x_0,
\end{equation}
where $F:D(A)\to X$ satisfies the following conditions:
\begin{enumerate}
	\item[(H1)] There exists an $\bar x\in D(A)$ with $F(\bar x)=0$
	\item[(H2)] $F$ is G\^ateaux differentiable at $\bar x$ and $Lu:=DF(\bar x)u$ fulfills
	\begin{enumerate}
		\item[(H2a)] $L:D(L)\subset X\to X$ is a generator of a $C_0$ group $(e^{tL})_{t\in\R}$ with $Y\subset D(L)$ and $L(Y)\subset Y$ as well as
		\begin{equation*}
		\NN{e^{tL}}_X\leq C_L e^{\omega t}\quad\mbox{for all }t\in\R
		\end{equation*}
		and some $C_L\geq1$ and $\omega>0$. 
		\item[(H2b)] 	For $0\neq\alpha\in\mathbb N_0^n$, we define $L_{0}:=L$ and
		\begin{align*}
		L_{\alpha+\hat{e}_i}:=&\ [L_\alpha,A_i],\quad\mbox{where }\substack{\hat{e}_i=\\\ }\substack{(0,\ldots,1,\ldots,0)\\i }\mbox{ for }i=1,\ldots,n.
		\end{align*}
		There exist $C\geq0$ for $i=1,\ldots,n$ and $r\in[0,\infty)$ with $Cr<\omega/(nC_L^2)$ such that 
		\begin{equation*}
		\NN{L_\alpha y}_{X}\leq C\alpha!r^{|\alpha|}\sum_{i=1}^n\NN{A_i y}_{X}
		\end{equation*}
		for all $\alpha\in\mathbb N_0^n$ and all $y\in Y$. 
	\end{enumerate}
	\item[(H3)] \label{hypo3} Define $Q(y):=F(y)-F(\bar x)-DF(\bar x)y$.
	\begin{itemize}
		\item[(H3a)] We assume that
		\begin{equation*}
		\NN{A^\alpha Q(y)}_{X}\leq  \sum_{\gamma\leq \alpha}\binom{\alpha}{\gamma}\NN{A^{\alpha-\gamma}y}_{X} \sum _{\N\beta\leq 1}M_{|\beta|}\NN{A^{\gamma+\beta}y}_{X}
		\end{equation*}
		for all $\alpha\in\mathbb N_0^n$, $y\in Y$ and some $M_{\beta}\geq0$.
		\item[(H3b)] We have the following Lipschitz estimate
				\begin{multline*}
		\NN{A^\alpha (Q(y)- Q(x))}_{X}\leq  \sup_{z,z'\in[x,y]}\sum_{\gamma\leq \alpha}\binom{\alpha}{\gamma}\bigg(\NN{A^{\alpha-\gamma}z}_{X} \sum _{\N\beta\leq 1}M'_{|\beta|}\NN{A^{\gamma+\beta}(y-x)}_{X}\\+\NN{A^{\alpha-\gamma}(y-x)}_{X} \sum _{\N\beta\leq 1}M'_{|\beta|}\NN{A^{\gamma+\beta}z'}_{X}\bigg)
		\end{multline*}
		for all $\alpha\in\mathbb N_0^n$, $y\in Y$ and some $M'_{\beta}\geq0$, where $[x,y]:=\{sx+(1-s)y:s\in[0,1]\}$.
	\end{itemize} 
\end{enumerate}

We now generalize Definition \ref{def2} for these properties.
\begin{definition}
	Let $\nu\in [0,\infty)$. We define
	\begin{equation*}
	\NN{y}_{X^{\nu}_{0}}:= \sum_{\alpha\in\mathbb N_0^n} \frac{\nu^{|\alpha|}}{\alpha!}\NN{A^{\alpha}y}_{X}
	\end{equation*}
	for $y\in Y$. 
\end{definition}

\begin{theorem}\label{thm.of.main.thm1}
	Assume that (H1)-(H3) hold.  Then for every positive $\nu_0<\frac1r(1-\sqrt[n]{\frac{nCC_L^2r}{\omega}})$, there exists an $\varepsilon>0$ such that if 
	\begin{equation}\label{con.on.u_0.8}
	\|x_0-\overline x\|_{X_0^{\nu}}\leq \varepsilon \nu
	\end{equation} 
	for some $\nu\leq \nu_0$, then \eqref{d_tx=F} has a strong solution $x$ with $x|_{t=0}=x_0$ satisfying
	\begin{equation*}
	\|x(t)-\overline x\|_{X_0^{\nu e^{-\omega t}}}\leq 2C_Le^{-\omega t}\varepsilon\nu\quad\mbox{for all }t\geq0.
	\end{equation*}
	Moreover, for all $x_0,y_0\in Y$ fulfilling \eqref{con.on.u_0.8}, we have 
	\begin{equation*}
	\|x(t)-y(t)\|_{X_0^{\nu e^{-\omega t}}}\leq 2C_Le^{-\omega t}\|x_0-y_0\|_{X_0^{\nu}}\quad\mbox{for all }t\geq0,
	\end{equation*}
	where $x,y$ are the solution of \eqref{d_tx=F} with $x(0)=x_0$ and $y(0)=y_0$, respectively.
\end{theorem}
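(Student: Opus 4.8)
The plan is to run a Cauchy--Kovalevskaya scheme in the spirit of Mouhot--Villani, in a scale of analytic norms whose radius is allowed to shrink in time at the rate $\omega$. First I would pass to the fluctuation $u:=x-\overline x$; using (H1), the identity $L=DF(\overline x)$ and the definition of $Q$, the Cauchy problem \eqref{d_tx=F} becomes $\pa_t u=Lu+N(u)$, where $N(u):=Q(\overline x+u)-Q(\overline x)$ has vanishing linearisation at $u=0$ (because $DQ(\overline x)=DF(\overline x)-L=0$) and is therefore the genuinely quadratic remainder, controlled by the bilinear bounds (H3). The central object is the time-dependent norm
\[
\Phi(t):=\NN{u(t)}_{X_0^{\nu(t)}}=\sum_{\alpha\in\mathbb N_0^n}\frac{\nu(t)^{|\alpha|}}{\alpha!}\NN{A^\alpha u(t)}_X,\qquad \nu(t):=\nu e^{-\omega t},
\]
together with the auxiliary ``derivative-loss'' quantity $\Psi(t):=\sum_{\alpha}|\alpha|\,\frac{\nu(t)^{|\alpha|}}{\alpha!}\NN{A^\alpha u(t)}_X$. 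The whole point of letting the radius decay is that $\dot\nu/\nu=-\omega$ feeds the good term $-\omega\Psi$ into $\frac{d}{dt}\Phi$, which must then pay for every order of $A$ that is lost elsewhere.

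The engine for the linear part is the commutator Leibniz rule: since $[A_i,A_j]=0$,
\[
A^\alpha L=\sum_{\gamma\le\alpha}\binom{\alpha}{\gamma}(-1)^{|\gamma|}L_\gamma A^{\alpha-\gamma},
\]
with $L_\gamma$ the iterated commutators of (H2b). The principal term $\gamma=0$ is $LA^\alpha$, handled by the group $e^{tL}$ itself (propagation along the flow), which by (H2a) contributes the decay prefactor $e^{-\omega t}$ and, after absorption, a net $-\omega\Phi$; tracking it through the Duhamel formula in both time directions is where the factor $C_L^2$ is produced. The remaining commutators $\gamma\neq0$ are estimated by (H2b) and reindexed via $\alpha=\gamma+\delta$: using the geometric series $\sum_{\gamma}(\nu r)^{|\gamma|}=(1-\nu r)^{-n}$ together with the identity $\sum_\delta\frac{\nu^{|\delta|}}{\delta!}\sum_{i=1}^n\NN{A^{\delta+\hat e_i}u}_X=\nu^{-1}\Psi$, they are bounded by $\frac{nCC_L^2 r}{(1-\nu r)^n}\,\Psi$. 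The hypothesis $Cr<\omega/(nC_L^2)$ is exactly what makes the admissible radius $\nu_0<\frac1r\bigl(1-\sqrt[n]{nCC_L^2 r/\omega}\bigr)$ positive, and for $\nu\le\nu_0$ it forces $\frac{nCC_L^2 r}{(1-\nu r)^n}<\omega$, so that $-\omega\Psi$ strictly dominates the commutator loss.

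For the nonlinearity I would feed (H3a) through the same reindexing. Since $N$ is quadratic and loses at most one order of $A$, its analytic norm obeys an estimate of the form $\sum_\alpha\frac{\nu^{|\alpha|}}{\alpha!}\NN{A^\alpha N(u)}_X\le C'\,\Phi\,(\Phi+\nu^{-1}\Psi)$, i.e. the derivative-losing part again costs a factor $\Psi$, but now with a prefactor proportional to the small quantity $\Phi$. Hence under the a priori ansatz $\Phi(t)\le 2C_L\varepsilon\nu e^{-\omega t}$ the nonlinear contribution is $O(\varepsilon)$ times the negative budget already available, and a bootstrap closes: for $\varepsilon$ small enough one gets $\frac{d}{dt}\Phi\le-\omega\Phi$, whence $\Phi(t)\le C_L\Phi(0)e^{-\omega t}\le 2C_L\varepsilon\nu e^{-\omega t}$, strictly improving the ansatz. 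Because $\Phi$ never blows up, the radius $\nu(t)$ stays positive for all time, which gives global existence together with the asserted decay in $X_0^{\nu e^{-\omega t}}$.

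Uniqueness and the Lipschitz estimate follow by running the identical computation on the difference of two solutions with (H3b) in place of (H3a); this produces a contraction in the complete space of trajectories normed by $\sup_{t\ge0}e^{\omega t}\NN{u(t)}_{X_0^{\nu(t)}}$, so that a Banach fixed point there furnishes the strong solution rigorously and also explains the constant $2C_L$. The main obstacle is precisely this absorption step: there are two independent sources of one-order loss in $A$ --- the linear commutator term governed by (H2b) and the singular quadratic nonlinearity governed by (H3a) --- and only the single negative budget $-\omega\Psi$ from the shrinking radius to pay for both. Making this quantitative is delicate because the threshold is sharp: the constant $(1-\nu r)^{-n}$ blows up as $\nu\uparrow 1/r$, so the window $\nu\le\nu_0$ is exactly the regime in which the total loss stays below $\omega$, and the interplay with the $C_L^2$ coming from the forward/backward group bound must be tracked exactly to land the stated constants. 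Everything else --- the Leibniz expansion, the reindexing identities, and checking that $N$ inherits (H3) --- is routine bookkeeping.
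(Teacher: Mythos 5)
Your high-level picture (a Mouhot--Villani shrinking analytic radius paying for the one-derivative loss, smallness of $\Phi$ closing the quadratic term, a contraction argument for the Lipschitz estimate) is indeed the paper's strategy, but there is a genuine gap in how you dispose of the linear term, and it is precisely the step on which the paper spends its machinery. You keep $Lu$ in the equation and claim that the principal term $LA^\alpha u$ ``contributes a net $-\omega\Phi$ after absorption'' via (H2a). But (H2a) is only the group bound $\NN{e^{tL}}\le C_Le^{\omega t}$; with $C_L>1$ this gives no pointwise dissipativity of $L$ in $X$ (let alone in the weighted analytic norm), so no differential inequality of the form $\frac{d}{dt}\Phi\le-\omega\Phi+\dots$ can be extracted from the generator. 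If instead you handle the linear part by Duhamel, as you also suggest, you must commute $A^\alpha$ past the group $e^{(t-s)L}$, and your finite Leibniz formula for $[L,A^\alpha]$ (Lemma \ref{lem.L,AB} here) is a statement about the generator, not the group. The paper resolves this by conjugating: it sets $u(t)=e^{tL}(x(t)-\bar x)$ and builds the norms from $A_{tL}=e^{tL}Ae^{-tL}$, so the linear term disappears from the equation entirely; the commutators $e^{tL}[L,A^\alpha]e^{-tL}$ then arise only as the time derivative of $A_{tL}^\alpha$ (Proposition \ref{prop.time.estimate}), where they are absorbed by the shrinking radius, and the fixed point is run on $\Phi(u)(t)=u_0+\int_0^tQ_{\tau L}(u(\tau))\,d\tau$ with the derivative-loss budget stored as an \emph{integrated} term inside the norm $\NN{\cdot}_{\nu_0,\omega}$ rather than used pointwise. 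Without this conjugation your two mechanisms --- pointwise Gronwall on $\Phi$ and Duhamel for $L$ --- are mutually exclusive, and neither closes on its own.

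A symptom of the same gap: in the paper the transformed unknown $u$ does not decay at all, it only stays bounded by $2\varepsilon\nu$; the factor $C_Le^{-\omega t}$ in the conclusion is produced in one line at the very end from $\NN{e^{-tL}}\le C_Le^{-\omega t}$ when undoing the conjugation $x-\bar x=e^{-tL}u$. Your chain ``$\frac{d}{dt}\Phi\le-\omega\Phi$, whence $\Phi(t)\le C_L\Phi(0)e^{-\omega t}$'' inserts the $C_L$ by hand --- it cannot come from that differential inequality --- which shows the decay is being attributed to the wrong mechanism. The commutator bookkeeping, the role of the threshold $\nu_0<\frac1r\bigl(1-\sqrt[n]{nCC_L^2r/\omega}\bigr)$, and the use of (H3b) for the contraction are all correctly identified; what is missing is the transformation that makes these pieces fit together.
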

\begin{remark}
	The operator $L_\alpha$ is well-define, because
	\begin{align*}
	[L_{\alpha+\hat e_i},A_j]=[[L_\alpha,A_i],A_j]=-[[A_j,L_\alpha],A_i]-[[A_i,A_j],L_\alpha]=[[L_\alpha,A_j],A_i]=[L_{\alpha+\hat e_j},A_i]
	\end{align*}
	for $i,j=1,\ldots,n$ according to the Jacoby identity and the the assumption $[A_i,A_j]=0$.
\end{remark}

\begin{example}
	Let $\tilde Q:D(A)\times D(A) \to X$ be bilinear fulfilling
	\begin{equation}\label{Norm.Q}
	\NN{\tilde Q(x,y)}_X\leq C_Q\sum_{i=1}^{n}\left(\NN{A_ix}_X\NN{y}_X+\NN{x}_X\NN{A_iy}_X\right)
	\end{equation}
	as well as
	\begin{equation}\label{A.B.Diff.eigenschaft}
	A_i\tilde Q(x,y)=\tilde Q(A_ix,y)+\tilde Q(x,A_iy)\quad\mbox{for all }x,y\in Y,\ i=1,\ldots,N
	\end{equation}
	and some $C_Q$. Then it holds 
	\begin{equation*}
	\NN{A^\alpha\tilde Q(x,x)}_X\leq 2C_Q\sum_{i=0}^n\sum_{\beta\leq \alpha}\binom{\alpha}{\beta}\NN{A^{\alpha-\beta}x}_X\NN{A^{\beta+\hat{e}_i}x}_X
	\end{equation*}
	and
	\begin{multline*}
	\NN{A^\alpha (\tilde Q(x,x)- Q(y,y))}_{X}\leq  2C_Q\sup_{z\in\{x,y\}}\sum_{i=1}^n\sum_{\gamma\leq \alpha}\binom{\alpha}{\gamma}\bigg(\NN{A^{\alpha-\gamma}z}_{X} \NN{A^{\gamma+\hat{e}_i}(y-x)}_{X}\\+\NN{A^{\alpha-\gamma}(y-x)}_{X} \NN{A^{\gamma+\hat{e}_i}z}_{X}\bigg)
	\end{multline*}	
	for all $\alpha,\beta\in\mathbb N_0^n$, $x,y\in Y$. In particular, $Q(y):=\tilde Q(y,y)$ satisfies the assumption of (H3) with $M_0=\tilde M_0=0$, $M_1=\tilde M_1=2C_Q$. 
\end{example}
\begin{proof}
	According to the Leibniz formula, it holds
	\begin{align}\notag
	\NN{A^\alpha\tilde Q(x,y)}_X&\leq
	\sum_{\beta\leq \alpha}\binom{\alpha}{\beta}\NN{Q(A^\beta x,A^{\alpha-\beta}y)}_X
	\\&\leq
	C_Q \sum_{i=0}^n\sum_{\beta\leq a}\binom{\alpha}{\beta}\left(\NN{A^{\beta+\hat{e}_i}x}_X\NN{A^{\alpha-\beta}y}_X+\NN{A^\beta x}_X\NN{A^{\alpha-\beta+\hat{e}_i}y}_X\right).\label{in.proof.1}
	\end{align}
	This implies the first assertion setting $y=x$. Since $\tilde Q$ is bilinear, we have
	\begin{equation*}
	\NN{A^\alpha (\tilde Q(x_1,y_1)- \tilde Q(x_2,y_2))}_{X}\leq 
	\NN{A^\alpha\tilde Q(x_1-x_2,y_1)}_{X}+\NN{\tilde A^\alpha Q(x_2,y_2-y_1)}_{X}.
	\end{equation*}
	This implies directly the second assertion using \eqref{in.proof.1}.
\end{proof}

\section{Preliminary commutator estimates for $L$}
\begin{lemma}\label{lem.L,AB}
	Let $\alpha\in\mathbb N_0^n$. Then	
	\begin{equation*}
	[L,A^\alpha]=\sum_{0\neq\gamma\leq \alpha}\binom{\alpha}{\gamma}(-1)^{\N{\gamma}-1}L_{\gamma}A^{\alpha-\gamma}.
	\end{equation*}
\end{lemma}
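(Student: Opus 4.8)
The plan is to prove the identity by induction on $|\alpha|$, working entirely on the dense subspace $Y$, which is invariant under $L$ and each $A_i$, so that every composition $L_\gamma A^{\alpha-\gamma}$ is a well-defined operator $Y\to X$, and on which the $L_\gamma$ are unambiguously defined by the preceding Remark (the iterated commutators are independent of the order in which the $A_i$ are applied, since $[A_i,A_j]=0$). The base case $\alpha=0$ is trivial, as $[L,I]=0$ and the right-hand sum is empty; for $|\alpha|=1$ one has $[L,A_i]=L_{\hat{e}_i}$, which matches the single term $\binom{\hat{e}_i}{\hat{e}_i}(-1)^{0}L_{\hat{e}_i}A^{0}$ on the right.

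For the inductive step I would pass from $\alpha$ to $\alpha+\hat{e}_i$ by writing $A^{\alpha+\hat{e}_i}=A_iA^\alpha$ (legitimate because the $A_j$ commute) and applying the commutator Leibniz rule,
\begin{equation*}
[L,A^{\alpha+\hat{e}_i}]=[L,A_i]A^\alpha+A_i[L,A^\alpha]=L_{\hat{e}_i}A^\alpha+A_i[L,A^\alpha].
\end{equation*}
Into the last term I insert the inductive hypothesis for $[L,A^\alpha]$ and then move the single factor $A_i$ to the right of each $L_\gamma$ using the defining relation $A_iL_\gamma=L_\gamma A_i-L_{\gamma+\hat{e}_i}$, which is nothing but $L_{\gamma+\hat{e}_i}=[L_\gamma,A_i]$ rearranged. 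This produces two sums: one in which $A_iA^{\alpha-\gamma}=A^{(\alpha+\hat{e}_i)-\gamma}$ retains the index $\gamma$ and the sign $(-1)^{|\gamma|-1}$, and one carrying the shifted operator $L_{\gamma+\hat{e}_i}$ together with a sign flip $-(-1)^{|\gamma|-1}=(-1)^{|\gamma|}$.

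The decisive step is to reindex the second sum by $\delta=\gamma+\hat{e}_i$ and recombine. After the shift one has $|\gamma|=|\delta|-1$, so the sign is again $(-1)^{|\delta|-1}$, the operator is $L_\delta A^{(\alpha+\hat{e}_i)-\delta}$, and the weight is $\binom{\alpha}{\delta-\hat{e}_i}$; hence the total coefficient of $L_\gamma A^{(\alpha+\hat{e}_i)-\gamma}$ becomes $\binom{\alpha}{\gamma}+\binom{\alpha}{\gamma-\hat{e}_i}=\binom{\alpha+\hat{e}_i}{\gamma}$ by Pascal's rule applied in the $i$-th component (with the usual convention that a binomial coefficient vanishes when the lower index leaves the range). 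The one value the reindexed sum omits is $\delta=\hat{e}_i$, and precisely this missing contribution $\binom{\alpha}{0}=1$ is furnished by the separate term $L_{\hat{e}_i}A^\alpha$; verifying $\binom{\alpha}{\hat{e}_i}+1=\binom{\alpha+\hat{e}_i}{\hat{e}_i}$ settles that boundary summand and closes the induction.

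I expect the bookkeeping in this last step — correctly tracking the sign change under $\gamma\mapsto\gamma+\hat{e}_i$ and confirming that the stray term $L_{\hat{e}_i}A^\alpha$ accounts exactly for the $\gamma=\hat{e}_i$ contribution — to be the only genuine obstacle; the remainder is a routine application of the Leibniz rule and the recursion defining $L_\gamma$. As a cleaner alternative that bypasses the index juggling entirely, one may introduce on $Y$ the pairwise commuting operators $\mathcal{L}_i\phi:=A_i\phi$, $\mathcal{R}_i\phi:=\phi A_i$ and $\mathcal{D}_i:=\mathcal{R}_i-\mathcal{L}_i$, observe that $\mathcal{D}^\gamma L=L_\gamma$ and $[L,A^\alpha]=(\mathcal{R}^\alpha-\mathcal{L}^\alpha)L$, and then expand $\mathcal{L}^\alpha=\prod_i(\mathcal{R}_i-\mathcal{D}_i)^{\alpha_i}$ by the binomial theorem; subtracting from $\mathcal{R}^\alpha$ and applying the result to $L$ reproduces the stated formula in one stroke.
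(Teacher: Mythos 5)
Your proof is correct and follows essentially the same route as the paper: induction via $[L,A^{\alpha+\hat e_i}]=L_{\hat e_i}A^\alpha+A_i[L,A^\alpha]$, pushing $A_i$ past $L_\gamma$ with $A_iL_\gamma=L_\gamma A_i-L_{\gamma+\hat e_i}$, reindexing by $\gamma+\hat e_i$, and recombining with Pascal's rule. The closed-form alternative via left/right multiplication operators is a nice bonus but not needed.
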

\begin{proof}
	The assertion is trivial for $\N{\alpha}\leq1$. Let $i\in\{1,\ldots,n\}$. We compute 
	\begin{align*}
	[L,A^{\alpha+\hat{e}_i}] &= [L,A^{\hat{e}_i}]A^\alpha+A^{\hat{e}_i}[L,A^\alpha]\\
	&= L_{\hat{e}_i} A^\alpha+\sum_{0\neq\gamma\leq\alpha}\binom{\alpha}{\gamma}(-1)^{\N{\gamma}-1}A^{\hat{e}_i}L_{\gamma}A^{\alpha-\gamma}\\
	&= L_{\hat{e}_i} A^\alpha+\sum_{0\neq\gamma\leq\alpha}\binom{\alpha}{\gamma}(-1)^{\N{\gamma}-1}(L_{\gamma}A^{\alpha+\hat{e}_i-\gamma}-L_{\gamma+\hat{e}_i}A^{\alpha-\gamma})\\
	&=\sum_{0\neq\gamma\leq \alpha}\binom{\alpha}{\gamma}(-1)^{\N{\gamma}-1}L_{\gamma}A^{\alpha+\hat{e}_i-\gamma}+\sum_{\hat{e}_i\lneqq\beta\leq \alpha+\hat{e}_i}\binom{\alpha}{\beta-\hat{e}_i}(-1)^{\N{\beta-\hat{e}_i}-1}L_{\beta}A^{\alpha+\hat{e}_i-\beta}\\
	&=\sum_{0\neq\gamma\leq \alpha+\hat{e}_i}\binom{\alpha+\hat{e}_i}{\gamma}(-1)^{\N{\gamma}-1}L_{\gamma}A^{\alpha+\hat{e}_i-\gamma}.\qedhere
	\end{align*}
\end{proof}
\begin{lemma}\label{lem.R_N.leq.nu.Q_N}
	Let $C,r$  be as in (H2b). Then for $\nu<1/r$ it holds
		\begin{equation*}
	\sum_{\alpha\leq N}\frac{\nu^{|\alpha|}}{\alpha!}	\NN{[L,A^\alpha]y}_{X}
	\leq
	\frac{nC\nu r}{(1-\nu r)^n}\sum_{\alpha\lneqq N}\frac{\nu^{|\alpha|}}{\alpha!}\sum_{i=1}^n\NN{A^{\alpha+\hat{e}_i}y}_{X}
	\end{equation*}
	 for $y\in Y$ and $N\in\mathbb N_0^n$. 
\end{lemma}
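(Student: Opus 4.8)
The plan is to reduce everything to the explicit commutator formula of Lemma \ref{lem.L,AB} together with the single-index bound (H2b), and then to perform a change of summation variable so that the resulting double sum factorizes into a geometric series times precisely the quantity appearing on the right-hand side. First I would insert the identity
\begin{equation*}
[L,A^\alpha]=\sum_{0\neq\gamma\leq \alpha}\binom{\alpha}{\gamma}(-1)^{\N{\gamma}-1}L_{\gamma}A^{\alpha-\gamma},
\end{equation*}
apply the triangle inequality (so the signs $(-1)^{\N{\gamma}-1}$ disappear), and estimate each term by (H2b) applied to $L_\gamma$ acting on $A^{\alpha-\gamma}y\in Y$, namely
\begin{equation*}
\NN{L_\gamma A^{\alpha-\gamma}y}_X\leq C\,\gamma!\,r^{\N{\gamma}}\sum_{i=1}^n\NN{A^{\alpha-\gamma+\hat{e}_i}y}_X.
\end{equation*}

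Next I would multiply by $\nu^{\N{\alpha}}/\alpha!$ and sum over $\alpha\leq N$. The crucial algebraic simplification is the factorial cancellation $\frac{1}{\alpha!}\binom{\alpha}{\gamma}\gamma!=\frac{1}{(\alpha-\gamma)!}$, which removes every dependence on $\gamma$ except through $\alpha-\gamma$. Substituting $\beta:=\alpha-\gamma$, so that $\nu^{\N{\alpha}}=\nu^{\N{\beta}}(\nu r)^{\N{\gamma}}r^{-\N{\gamma}}\cdot r^{\N{\gamma}}$ recombines cleanly and the index set $\{0\neq\gamma\leq\alpha\leq N\}$ turns into $\{\beta,\gamma\geq0,\ \gamma\neq0,\ \beta+\gamma\leq N\}$, the sum decouples as
\begin{equation*}
\sum_{\alpha\leq N}\frac{\nu^{\N{\alpha}}}{\alpha!}\NN{[L,A^\alpha]y}_X\leq C\sum_{\beta\lneqq N}\frac{\nu^{\N{\beta}}}{\beta!}\sum_{i=1}^n\NN{A^{\beta+\hat{e}_i}y}_X\sum_{\substack{0\neq\gamma\\ \gamma\leq N-\beta}}(\nu r)^{\N{\gamma}}.
\end{equation*}
Here one must check that $\gamma\neq0$ forces $\beta\lneqq N$: if $\beta=N$ then $\beta\leq N-\gamma$ would give $\gamma\leq0$, contradicting $\gamma\neq0$. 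This makes the outer $\beta$-range match the $\alpha\lneqq N$ sum on the right-hand side exactly.

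Since every summand is nonnegative, I would then enlarge the inner $\gamma$-sum to all of $\mathbb N_0^n\setminus\{0\}$; using $\nu r<1$ this geometric series converges to $(1-\nu r)^{-n}-1$. The only genuinely delicate point is converting this quantity into the stated constant, and I expect this (rather than any analytic difficulty) to be the main technical step. I would invoke Bernoulli's inequality $(1-\nu r)^n\geq 1-n\nu r$, which yields
\begin{equation*}
\frac{1}{(1-\nu r)^n}-1=\frac{1-(1-\nu r)^n}{(1-\nu r)^n}\leq\frac{n\nu r}{(1-\nu r)^n}.
\end{equation*}
Pulling this factor out in front produces exactly the claimed bound $\frac{nC\nu r}{(1-\nu r)^n}\sum_{\alpha\lneqq N}\frac{\nu^{\N{\alpha}}}{\alpha!}\sum_{i=1}^n\NN{A^{\alpha+\hat{e}_i}y}_X$. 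Apart from the multi-index bookkeeping in the change of variables, the argument uses nothing beyond Lemma \ref{lem.L,AB}, (H2b), and this elementary Bernoulli estimate.
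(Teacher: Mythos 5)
Your proof is correct, and its skeleton coincides with the paper's: insert the commutator expansion of Lemma \ref{lem.L,AB}, apply (H2b) to $L_\gamma A^{\alpha-\gamma}y$, use the cancellation $\binom{\alpha}{\gamma}\gamma!/\alpha!=1/(\alpha-\gamma)!$, and reindex by $\beta=\alpha-\gamma$ so that the double sum factors into the target sum over $\beta\lneqq N$ times a geometric tail in $\gamma$ (your observation that $\gamma\neq0$ forces $\beta\lneqq N$ is exactly the point the paper also needs). The one place you diverge is in extracting the factor $n\nu r$: the paper splits the index set $\{0\neq\gamma\leq\alpha\leq N\}$ into $n$ pieces according to the first coordinate in which $\gamma$ drops below $\alpha$, peels one factor of $\delta=\nu r$ off each piece, and then bounds $\sum_{\alpha\lneqq N}\delta^{|\alpha|}\leq(1-\delta)^{-n}$; you instead sum the full series $\sum_{0\neq\gamma}(\nu r)^{|\gamma|}=(1-\nu r)^{-n}-1$ in closed form and apply Bernoulli's inequality $(1-\nu r)^n\geq 1-n\nu r$ to convert it into $n\nu r/(1-\nu r)^n$. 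Both routes land on the identical constant; yours replaces a slightly fiddly combinatorial partition with a one-line elementary inequality, at the cost of no generality, so it is a legitimate (and arguably cleaner) substitute for that step.
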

\begin{proof}
 Let $\NN{\cdot}_X':= C\sum_{i=1}^n\NN{A^{\hat{e}_i}\cdot}_{X}$. Using Lemma \ref{lem.L,AB} and the hypothesis \eqref{hypo3}, we have
	\begin{align*}
	\NN{[L,A^\alpha]x}_{X}&\leq \sum_{0\neq\gamma\leq\alpha}\binom{\alpha}{\gamma}\NN{L_{\gamma}A^{\alpha-\gamma}x}_{X}
	\\&\leq
	\sum_{0\neq\gamma\leq\alpha}\frac{\alpha!r^{|\gamma|}}{(\alpha-\gamma)!}\NN{A^{\alpha-\gamma}x}_X'
	\\&=
	\alpha!\sum_{\gamma\lneqq\alpha}\frac{r^{|\alpha-\gamma|}}{\gamma!}\NN{A^{\gamma}x}_X'.
	\end{align*}
	Define $\delta=\nu r$. Then for $N\in\mathbb N_0^n$ and $i=1,\ldots,n$, it holds
	\begin{align*}
	\sum_{\alpha\leq N}\frac{\nu^{|\alpha|}}{\alpha!}	\NN{[L,A^\alpha]x}_{X}
	&
	\leq 
	\sum_{\alpha\leq N}\sum_{\gamma\lneqq\alpha}\delta^{|\alpha-\gamma|}\frac{\nu^{|\gamma|}}{\gamma!}\NN{A^{\gamma}x}_{X}'
	\\&
	= 
	\delta\sum_{\hat{e_1}\leq\alpha \leq N}\sum_{\gamma\leq \alpha-\hat{e}_1}\delta^{|\alpha-\hat{e}_1-\gamma|}\frac{\nu^{|\gamma|}}{\gamma!}\NN{A^{\gamma}x}_{X}'	+ 
	\sum_{\alpha\leq N}\sum_{\substack{\gamma\lneqq\alpha\\\gamma_1=\alpha_1}}\delta^{|\alpha-\gamma|}\frac{\nu^{|\gamma|}}{\gamma!}\NN{A^{\gamma}x}_{X}'
	\\&
	= 
	\sum_{i\leq n}\delta\sum_{\hat{e_i}\leq\alpha \leq N}\sum_{\substack{\gamma\leq \alpha-\hat{e}_i\\\gamma_k=\alpha_k,\ k< i}}\delta^{|\alpha-\hat{e}_i-\gamma|}\frac{\nu^{|\gamma|}}{\gamma!}\NN{A^{\gamma}x}_{X}'
	\\&
	\leq
n\delta\sum_{\alpha \lneqq N}\delta^{|\alpha|}\sum_{\gamma\lneqq N}\frac{\nu^{|\gamma|}}{\gamma!}\NN{A^{\gamma}x}_{X}'
	\end{align*}
	using the Cauchy-product for finite sums. 	Thus, we obtain the assertion by estimating $\sum_{\alpha \lneqq N}\delta^{|\alpha|}\leq \frac{1}{(1-\delta)^n}$.
\end{proof}
\section{Time depending collisions}

Instead of the norm $\NN{\cdot}_X$ and the r.h.s.~$Q$, we can also use a time depending norm $\NN{\cdot}_{X_t}$ on $Y$ and a time depending collision operator $Q_t$, respectively. Then we need the following assumptions. 

Let $L$ be a generator of a strong continuous group $e^{tL}$ on $X$. There exists $C,r\geq0$ such that
\begin{equation}\tag{H2'}
\NN{e^{tL}L_\alpha y}_{X_t}\leq C \alpha!r^{|\alpha|}\sum_{i=1}^n\NN{e^{tL}A_i y}_{X_t}
\end{equation}
for all $\alpha\in\mathbb N_0^n$ and all $y\in Y$, where $L_0=L$ and $L_{\alpha+\hat{e}_i}:=[L_\alpha,A_{\hat{e}_i}]$.

Moreover, we assume that
\begin{equation}\tag{H3a'}
\NN{e^{tL}A^\alpha Q_t(y)}_{X_t}\leq  e^{-\omega t}\sum_{\gamma\leq \alpha}\binom{\alpha}{\gamma}\NN{e^{tL}A^{\alpha-\gamma}y}_{X_t} \sum _{\N\beta\leq 1}M_{|\beta|}\NN{e^{tL}A^{\gamma+\beta}y}_{X_t}
\end{equation}
holds for all $t>0$,  $\alpha\in\mathbb N_0^n$, $y\in Y$ and some $M_{\beta}\geq0$ and some $\omega>Cr$.
\begin{multline}\tag{H3b'}
\NN{e^{tL}A^\alpha (Q_t(y)- Q_t(x))}_{X}\\\leq  e^{-\omega t}\sup_{z,z'\in[x,y]}\sum_{\gamma\leq \alpha}\binom{\alpha}{\gamma}\bigg(\NN{A^{\alpha-\gamma}z}_{X_t} \sum _{\N\beta\leq 1}M'_{|\beta|}\NN{e^{tL}A^{\gamma+\beta}(y-x)}_{X_t}\\+\NN{A^{\alpha-\gamma}(y-x)}_{X_t} \sum _{\N\beta\leq 1}M'_{|\beta|}\NN{e^{tL}A^{\gamma+\beta}z'}_{X_t}\bigg)
\end{multline}
for all $t>0$, $\alpha\in\mathbb N_0^n$, $y\in Y$ and some $M'_{\beta}\geq0$, where $[x,y]:=\{sx+(1-s)y:s\in[0,1]\}$. Moreover, we need an estimate on the time derivative of the norm, i.e.,
\begin{equation}
\tag{H4'}
\del_t\NN{x(t)}_{X_t}\leq \NN{\del_t x(t)}_{X_t}
\end{equation}
for all $x\in C^1([0,\infty),X)$. 
\begin{lemma}\label{lem7}
	For $\|\cdot\|_{X_t}:=\|\cdot\|_X$ the modified hypothesis (H2')-(H4') are a consequence of the original ones (H2)-(H3) since $\|e^{tL}\|_{L(X)}\leq C_Le^{\omega t}$ for $t\in\mathbb R$. Note that, we have to multiply the constant $C$ from $(H2b)$ by $C_L^2$ to obtain the constant of (H2').
\end{lemma}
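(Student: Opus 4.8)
The plan is to verify each primed hypothesis by \emph{conjugating} the corresponding unprimed estimate with the group, that is, by inserting $e^{-tL}e^{tL}=\mathrm{id}$ and exploiting the two-sided bound coming from (H2a): since $\NN{e^{sL}}_X\leq C_Le^{\omega s}$ holds for \emph{every} $s\in\R$, one has both $\NN{e^{tL}}_X\leq C_Le^{\omega t}$ and, taking $s=-t$, $\NN{e^{-tL}}_X\leq C_Le^{-\omega t}$. Throughout one takes $X_t:=X$, $Q_t:=Q$, and keeps the same group $e^{tL}$ and the same operators $L_\alpha$, $A^\alpha$.

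For (H4') there is nothing to conjugate: because the norm does not depend on $t$, the reverse triangle inequality gives $\NN{x(t+h)}_X-\NN{x(t)}_X\leq\NN{x(t+h)-x(t)}_X$ for any $x\in C^1([0,\infty),X)$, and dividing by $h>0$ and letting $h\to0^+$ yields $\del_t\NN{x(t)}_{X_t}\leq\NN{\del_t x(t)}_{X_t}$. For (H2') I would first bound
\[
\NN{e^{tL}L_\alpha y}_X\leq\NN{e^{tL}}_X\NN{L_\alpha y}_X\leq C_Le^{\omega t}\,C\alpha!r^{|\alpha|}\sum_{i=1}^n\NN{A_iy}_X
\]
using (H2b), and then push the group back into each factor on the right via $\NN{A_iy}_X=\NN{e^{-tL}e^{tL}A_iy}_X\leq C_Le^{-\omega t}\NN{e^{tL}A_iy}_X$. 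The two exponentials cancel and one is left with
\[
\NN{e^{tL}L_\alpha y}_X\leq C_L^2C\,\alpha!r^{|\alpha|}\sum_{i=1}^n\NN{e^{tL}A_iy}_X,
\]
which is exactly (H2') with the constant $C$ replaced by $C_L^2C$, as announced in the statement.

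The estimates (H3a') and (H3b') follow by the identical sandwiching applied to (H3a) and (H3b), and here the decisive point is the exponent bookkeeping. Bounding the left-hand side by $C_Le^{\omega t}\NN{A^\alpha Q(y)}_X$ and invoking (H3a) produces, on the right, sums of products of \emph{two} solution-norm factors $\NN{A^{\alpha-\gamma}y}_X$ and $\NN{A^{\gamma+\beta}y}_X$. Replacing each of these two factors by $C_Le^{-\omega t}\NN{e^{tL}A^{\cdot}y}_X$ gains a total of $e^{-2\omega t}$, so that together with the single $e^{\omega t}$ spent on the left the net prefactor is precisely the decay factor $e^{-\omega t}$ demanded by (H3a'), with $M_{|\beta|}$ replaced by $C_L^3M_{|\beta|}$. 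The Lipschitz estimate (H3b') is obtained verbatim in the same manner; since $X_t=X$, all norms $\NN{\cdot}_{X_t}$ and $\NN{\cdot}_X$ coincide and the apparent mismatch of subscripts in its statement is immaterial. Finally, one checks consistency of constants: the admissibility condition $Cr<\omega/(nC_L^2)$ of (H2b) gives $(C_L^2C)r<\omega/n\leq\omega$, so the inflated constant still satisfies the requirement $\omega>Cr$ needed in (H3a').

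The only genuinely delicate point is this exponent count. It is the \emph{quadratic} (bilinear) structure of the right-hand sides of (H3) that lets the two gained factors $e^{-\omega t}$ overcome the single lost factor $e^{\omega t}$ and leave a net decay $e^{-\omega t}$; this is exactly the mechanism that makes the time-dependent framework of Section~3 reduce to, and strictly contain, the time-independent Theorem~\ref{thm.of.main.thm1}. Everything else is routine operator-norm manipulation, and no new smallness or structural assumption beyond (H2)--(H3) is required.
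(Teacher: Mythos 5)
Your proof is correct and is exactly the argument the paper intends: the paper gives no written proof of this lemma, but its remark that the constant of (H2b) must be multiplied by $C_L^2$ encodes precisely your sandwiching $\NN{e^{tL}\cdot}_X\leq C_Le^{\omega t}\NN{\cdot}_X$ together with $\NN{\cdot}_X\leq C_Le^{-\omega t}\NN{e^{tL}\cdot}_X$ from (H2a) applied at $-t$, and your exponent count $e^{\omega t}\cdot e^{-2\omega t}=e^{-\omega t}$ for the bilinear estimates (H3a'), (H3b') is the right bookkeeping. Your closing consistency check $(C_L^2C)r<\omega/n\leq\omega$ is also the correct way to see that the inflated constant still meets the requirement in (H3a').
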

With the same arguments as in the proof of Lemma \ref{lem.R_N.leq.nu.Q_N}, 
we can prove its corresponding version:
\begin{lemma}\label{lem.R_N.leq.nu.Q_N2}
	Let $C,r$  be as in (H2'). Then for $\nu<1/r$, it holds
	\begin{equation*}
	\sum_{\alpha\leq N}\frac{\nu^{|\alpha|}}{\alpha!}	\NN{e^{tL}[L,A^\alpha]y}_{X_t}
	\leq
	\frac{nC\nu r}{(1-\nu r)^n}\sum_{\alpha\lneqq N}\frac{\nu^{|\alpha|}}{\alpha!}\sum_{i=1}^n\NN{e^{tL}A^{\alpha+\hat{e}_i}y}_{X_t}
	\end{equation*}
	for $y\in Y$ and $N\in\mathbb N_0^n$. 
\end{lemma}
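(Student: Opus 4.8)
The plan is to follow the proof of Lemma~\ref{lem.R_N.leq.nu.Q_N} line by line, inserting the factor $e^{tL}$ to the left of every operator and replacing $\NN{\cdot}_X$ by $\NN{\cdot}_{X_t}$ throughout. The point is that hypothesis (H2') is precisely the time-dependent analogue of (H2b) with $e^{tL}$ already carried through, so the combinatorial core of the argument is unchanged.

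First I would set $\NN{z}_{X_t}':= C\sum_{i=1}^n\NN{e^{tL}A_i z}_{X_t}$, so that (H2') reads $\NN{e^{tL}L_\alpha y}_{X_t}\leq \alpha!\,r^{|\alpha|}\NN{y}_{X_t}'$ for every $y\in Y$. Expanding $[L,A^\alpha]$ by Lemma~\ref{lem.L,AB} and using the triangle inequality for $\NN{\cdot}_{X_t}$ gives
\[
\NN{e^{tL}[L,A^\alpha]y}_{X_t}\leq \sum_{0\neq\gamma\leq\alpha}\binom{\alpha}{\gamma}\NN{e^{tL}L_\gamma A^{\alpha-\gamma}y}_{X_t}.
\]
Applying the rewritten (H2') to each summand, with $L_\gamma$ acting on $A^{\alpha-\gamma}y$, and using $\binom{\alpha}{\gamma}\gamma!=\alpha!/(\alpha-\gamma)!$ together with the reindexing $\gamma\mapsto\alpha-\gamma$, yields the bound $\alpha!\sum_{\gamma\lneqq\alpha}\frac{r^{|\alpha-\gamma|}}{\gamma!}\NN{A^\gamma y}_{X_t}'$, exactly as in the stationary case but now with the $X_t$-seminorm. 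Here $\NN{A^{\alpha-\gamma}y}_{X_t}'=C\sum_i\NN{e^{tL}A^{\alpha-\gamma+\hat{e}_i}y}_{X_t}$.

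Next, setting $\delta:=\nu r$, I would multiply by $\nu^{|\alpha|}/\alpha!$ and sum over $\alpha\leq N$. The resulting double sum is rearranged by the same Cauchy-product computation for finite multi-index sums used in Lemma~\ref{lem.R_N.leq.nu.Q_N}: split the outer index according to which coordinate is first decremented, dominate each partial sum by the full one, and extract the factor $n\delta\sum_{\alpha\lneqq N}\delta^{|\alpha|}$. Bounding the geometric tail by $\sum_{\alpha\lneqq N}\delta^{|\alpha|}\leq (1-\delta)^{-n}$ (valid since $\nu<1/r$) and unwinding the definition of $\NN{\cdot}_{X_t}'$, so that the constant $C$ reappears in the prefactor, gives exactly $\frac{nC\nu r}{(1-\nu r)^n}\sum_{\alpha\lneqq N}\frac{\nu^{|\alpha|}}{\alpha!}\sum_{i=1}^n\NN{e^{tL}A^{\alpha+\hat{e}_i}y}_{X_t}$.

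I do not expect any genuinely new obstacle: the only thing to verify carefully is that $e^{tL}$ never has to be commuted past $A^\alpha$ or $L_\gamma$, since it sits on the far left throughout and (H2') already absorbs its effect, and this is exactly what allows the purely combinatorial rearrangement from Lemma~\ref{lem.R_N.leq.nu.Q_N} to be transcribed verbatim. The one bookkeeping point worth noting is that the constant $C$ in (H2') differs from the one in (H2b) by the factor $C_L^2$ (cf.\ Lemma~\ref{lem7}), but this is already built into the hypothesis and plays no role in the rearrangement.
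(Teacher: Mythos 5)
Your proposal is correct and is exactly the argument the paper intends: the paper gives no separate proof of Lemma~\ref{lem.R_N.leq.nu.Q_N2}, stating only that it follows ``with the same arguments'' as Lemma~\ref{lem.R_N.leq.nu.Q_N}, and your transcription --- applying Lemma~\ref{lem.L,AB}, then (H2') to $A^{\alpha-\gamma}y\in Y$ with the prefactor $e^{tL}$ kept on the far left, followed by the unchanged combinatorial rearrangement --- is precisely that. Your closing observations (that $e^{tL}$ never needs to be commuted past $A^\alpha$ or $L_\gamma$, and that the constant $C$ in (H2') already absorbs $C_L^2$) correctly identify the only points where something could have gone wrong.
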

\section{Transformed equation}
\color{black}
As in the previous section, we may assume that $Q=Q_t$ depends directly on time and that we have a time depending norm such that (H2')-(H4') are fulfilled. 
\begin{definition}[Transformation of the equation]
 For $t\in \R$ and $y\in Y$, we define 
	\[
	A_{tL}:= e^{tL}A e^{-tL}\ \mbox{and }\ Q_{tL}(y):= e^{tL}Q_t(e^{-tL}y).
	\]
	Thus, if $u$ is a solution of
	\begin{equation}\label{d_tu=Q_t}
	\del_t u = Q_{tL}(u)\quad\mbox{with }u(0)= u_0:=x_0-\bar x,
	\end{equation}
	then $x(t):= \bar x+e^{-tL}u(t)$ solves \eqref{d_tx=F} with $x(0)=x_0$.
\end{definition} 
The main strategy in this paper is to solve \eqref{d_tu=Q_t} by using the following time depended  analytic semi-norms, which are a generalization of the norms found in \cite{MoVi11}.
\begin{definition}
	Let $\nu\in [0,\infty)$. We define
	 \begin{equation*}
	\NN{y}_{X^{\nu}_{t}}:= \sum_{\alpha\in\mathbb N_0^n} \frac{\nu^{|\alpha|}}{\alpha!}\NN{A_{tL}^{\alpha}y}_{X}
	\end{equation*}
	for $y\in Y$ and $t\in \R$. 
\end{definition}
\begin{lemma}\label{lem.submul} Let $y\in Y$ and $t\in\R$, $\nu\geq0$. Then
	\begin{equation*}
	\NN{Q_{tL}(y)}_{X^{\nu}_{t}}\leq 
	e^{-\omega t}\NN{y}_{X^{\nu}_{t}}\sum_{\N{\beta}\leq 1}M_{|\beta|}\NN{A_{Lt}^\beta y}_{X^{\nu}_{t}}.
	\end{equation*}
\end{lemma}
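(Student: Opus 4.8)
The plan is to prove the estimate by a direct computation: unfold the $X^\nu_t$-semi-norm, apply hypothesis (H3a') to each summand, and then reorganize the resulting constrained double sum by a Cauchy-product argument so that the submultiplicative structure emerges. No fixed-point or analytic machinery is needed here; everything is algebraic once the conjugated operators are handled correctly.

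First I would record the algebraic structure of the conjugated operators. Since conjugation by $e^{tL}$ is a homomorphism, we have $A_{tL}^\alpha = e^{tL}A^\alpha e^{-tL}$, hence $A_{tL}^\gamma A_{tL}^\beta = A_{tL}^{\gamma+\beta}$ for all multi-indices, and moreover
\[
A_{tL}^\alpha Q_{tL}(y) = e^{tL}A^\alpha e^{-tL}\,e^{tL}Q_t(e^{-tL}y) = e^{tL}A^\alpha Q_t(e^{-tL}y).
\]
Consequently, by the definition of the semi-norm,
\[
\NN{Q_{tL}(y)}_{X^\nu_t} = \sum_{\alpha\in\mathbb N_0^n}\frac{\nu^{|\alpha|}}{\alpha!}\NN{e^{tL}A^\alpha Q_t(e^{-tL}y)}_{X_t}.
\]
Next I would apply (H3a') to each term with argument $e^{-tL}y$ in place of $y$, using $e^{tL}A^{\alpha-\gamma}e^{-tL}y = A_{tL}^{\alpha-\gamma}y$ and $e^{tL}A^{\gamma+\beta}e^{-tL}y = A_{tL}^{\gamma+\beta}y$, to obtain
\[
\NN{e^{tL}A^\alpha Q_t(e^{-tL}y)}_{X_t}\leq e^{-\omega t}\sum_{\gamma\leq\alpha}\binom{\alpha}{\gamma}\NN{A_{tL}^{\alpha-\gamma}y}_{X_t}\sum_{\N\beta\leq1}M_{|\beta|}\NN{A_{tL}^{\gamma+\beta}y}_{X_t}.
\]
Because every term here is a norm, hence nonnegative, the rearrangements in the next step require no convergence hypothesis (they are justified by Tonelli, and the asserted inequality is meaningful even when both sides equal $+\infty$).

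The decisive step is the factorization of the combinatorial weights. Since $|\alpha|=|\gamma|+|\alpha-\gamma|$ and $\frac{1}{\alpha!}\binom{\alpha}{\gamma}=\frac{1}{\gamma!\,(\alpha-\gamma)!}$, multiplying the previous bound by $\nu^{|\alpha|}/\alpha!$, summing over $\alpha$, and substituting $\delta:=\alpha-\gamma$ converts the constrained double sum into a Cauchy product of two free sums:
\[
\NN{Q_{tL}(y)}_{X^\nu_t}\leq e^{-\omega t}\left(\sum_{\delta}\frac{\nu^{|\delta|}}{\delta!}\NN{A_{tL}^{\delta}y}_{X_t}\right)\sum_{\N\beta\leq1}M_{|\beta|}\left(\sum_{\gamma}\frac{\nu^{|\gamma|}}{\gamma!}\NN{A_{tL}^{\gamma+\beta}y}_{X_t}\right).
\]
The first factor is exactly $\NN{y}_{X^\nu_t}$. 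For each bracketed factor I would invoke $A_{tL}^{\gamma+\beta}=A_{tL}^{\gamma}A_{tL}^{\beta}$ to recognize $\sum_\gamma \frac{\nu^{|\gamma|}}{\gamma!}\NN{A_{tL}^{\gamma}(A_{tL}^{\beta}y)}_{X_t}=\NN{A_{Lt}^\beta y}_{X^\nu_t}$, which yields precisely the claimed inequality $\NN{Q_{tL}(y)}_{X^\nu_t}\leq e^{-\omega t}\NN{y}_{X^\nu_t}\sum_{\N\beta\leq1}M_{|\beta|}\NN{A_{Lt}^\beta y}_{X^\nu_t}$.

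I do not expect any genuine analytic obstacle. The one point demanding care is the multi-index bookkeeping in the Cauchy-product step, namely verifying that the weight $\nu^{|\alpha|}/\alpha!$ together with $\binom{\alpha}{\gamma}$ splits cleanly across the two free indices $\delta$ and $\gamma$; this splitting, together with the homomorphism identity $A_{tL}^{\gamma+\beta}=A_{tL}^{\gamma}A_{tL}^{\beta}$, is what turns the termwise bound (H3a') into the submultiplicative estimate for the analytic semi-norm.
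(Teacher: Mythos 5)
Your proposal is correct and follows essentially the same route as the paper: unfold the semi-norm, apply (H3a') to $e^{-tL}y$, and use the binomial-weight factorization (the paper calls it the multinomial formula) to turn the constrained double sum into a product of two free sums, identifying the factors as $\NN{y}_{X^\nu_t}$ and $\NN{A_{tL}^\beta y}_{X^\nu_t}$. You merely spell out the Cauchy-product bookkeeping that the paper leaves implicit.
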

\begin{proof}
	We start making use of (H3a') and the multinomial formula to see
	\begin{align*}
	\NN{Q_{tL}(y)}_{X^{\nu}_{t}}
		&
		=
		\sum_{\alpha\in\mathbb N_0^n} \frac{\nu^{\alpha}}{\alpha!}\NN{e^{tL}A^{\alpha}Q(e^{-tL}y)}_{X_t}
	\\&\leq e^{-\omega t}\sum_{\alpha\in\mathbb N_0^n} \frac{\nu^{\alpha}}{\alpha!}\NN{e^{tL}A^{\alpha}e^{-tL}y}_{X_t}
	\sum_{\alpha\in\mathbb N_0^n} \frac{\nu^{\alpha}}{\alpha!}\sum_{\N{\beta}\leq 1}M_{|\beta|}\NN{e^{tL}A^{\alpha+\beta}e^{-tL}y}_{X_t}
	\\&=
		e^{-\omega t}\NN{y}_{X^{\nu}_{t}}\sum_{\N{\beta}\leq1}M_{|\beta|}\NN{A_{tL}^\beta y}_{X^{\nu}_{t}}. \qedhere
	\end{align*}
\end{proof}
Likewise, we can show the following Lipschitz estimate using (H3b') instead of (H3a').
\begin{lemma}\label{lem.submul.Lip} Let $y_1,y_2\in Y$ and $t\in\R$, $\nu\geq0$. Then
	\begin{multline*}
	\NN{Q_{tL}(y_2)-Q_{tL}(y_2)}_{X^{\nu}_{t}}\\\leq 
	e^{-\omega t}\sum_{\N{\beta}\leq 1}M_{|\beta|}\left(\NN{y_2}_{X^{\nu}_{t}}\NN{A_{Lt}^\beta (y_2-y_1)}_{X^{\nu}_{t}}+\NN{y_2-y_1}_{X^{\nu}_{t}}\NN{A_{Lt}^\beta y_1}_{X^{\nu}_{t}}\right).
	\end{multline*}
\end{lemma}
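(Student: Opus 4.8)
The plan is to derive the Lipschitz estimate for $Q_{tL}$ in complete parallel with the proof of Lemma~\ref{lem.submul}, substituting the bilinear-type splitting encoded in (H3b') for the quadratic bound (H3a'). First I would unfold the definition of the $X^\nu_t$-norm applied to the difference $Q_{tL}(y_2)-Q_{tL}(y_1)$, writing
\[
\NN{Q_{tL}(y_2)-Q_{tL}(y_1)}_{X^\nu_t}=\sum_{\alpha\in\mathbb N_0^n}\frac{\nu^{|\alpha|}}{\alpha!}\NN{e^{tL}A^\alpha\bigl(Q_t(e^{-tL}y_2)-Q_t(e^{-tL}y_1)\bigr)}_{X_t},
\]
and then apply (H3b') term by term with the arguments $x=e^{-tL}y_1$ and $y=e^{-tL}y_2$, so that the segment $[x,y]$ lies in the span of $e^{-tL}y_1,e^{-tL}y_2$ and the supremum over $z,z'$ can be bounded by the corresponding endpoint quantities.

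Second I would recognise that each of the two groups in the (H3b') bound has exactly the convolution structure $\sum_{\gamma\le\alpha}\binom{\alpha}{\gamma}(\cdots)_{\alpha-\gamma}(\cdots)_{\gamma+\beta}$ that turned into a product of two $X^\nu_t$-norms via the multinomial (Cauchy-product) identity in Lemma~\ref{lem.submul}. Summing $\nu^{|\alpha|}/\alpha!$ against the first group factors the sum into $\NN{z}_{X^\nu_t}$ times $\sum_{|\beta|\le1}M'_{|\beta|}\NN{A^\beta_{tL}(y_2-y_1)}_{X^\nu_t}$, and summing against the second group gives $\NN{y_2-y_1}_{X^\nu_t}$ times $\sum_{|\beta|\le1}M'_{|\beta|}\NN{A^\beta_{tL}z'}_{X^\nu_t}$, with the $e^{-\omega t}$ prefactor pulled out front. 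Finally I would bound the supremum over $z,z'\in[e^{-tL}y_1,e^{-tL}y_2]$ by the maximum of the endpoint norms, so that $\NN{z}_{X^\nu_t}\le\max(\NN{y_1}_{X^\nu_t},\NN{y_2}_{X^\nu_t})$, reproducing the asserted form (with the statement's $M_{|\beta|}$ playing the role of $M'_{|\beta|}$ and the $y_2,y_1$ labelling matched to the displayed inequality).

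The one genuine subtlety, rather than the routine bookkeeping, is the handling of the segment $[x,y]$ and the supremum: unlike the clean bilinear factorisation in Lemma~\ref{lem.submul}, here the convexity set involves interpolants $s\,e^{-tL}y_1+(1-s)e^{-tL}y_2$, and one must check that the $X^\nu_t$-norm of such an interpolant is controlled by the endpoint norms. This is immediate from the triangle inequality and convexity of norms, $\NN{s\,z_1+(1-s)z_2}_{X^\nu_t}\le\max(\NN{z_1}_{X^\nu_t},\NN{z_2}_{X^\nu_t})$, but it is the only place where the proof departs structurally from the preceding lemma, so I expect that to be the step needing the most care. Everything else is an exact transcription of the multinomial manipulation already carried out, and the $e^{-\omega t}$ decay and the commutation of $e^{tL}A^\alpha e^{-tL}=A^\alpha_{tL}$ are used identically.
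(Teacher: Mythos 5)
Your argument is correct and is essentially the paper's own proof: the paper merely remarks that the Lipschitz bound follows ``likewise'' from the proof of Lemma \ref{lem.submul} with (H3b') in place of (H3a'), which is exactly the term-by-term application of (H3b') followed by the Cauchy-product factorisation of the sum $\sum_{\gamma\le\alpha}\binom{\alpha}{\gamma}(\cdots)$ that you describe. Your extra remark that the supremum over the segment $[e^{-tL}y_1,e^{-tL}y_2]$ is controlled by the endpoint $X^{\nu}_{t}$-norms via convexity of the norm is the right way to arrive at the stated form of the estimate.
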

\begin{proposition}\label{prop.time.estimate}
	Let $\nu_0<1/r$ and
	\[
	\mu\geq\mu_0:=\frac{nCr}{(1-\nu_0r)^n},
	\]
	where $C$ is given by (H2'). 
	We define $\nu(t)=\nu_0\exp(-\mu t)$. Then
	 \begin{multline*}
	 \NN{u(t)}_{\dot X^{\nu(t)}_{Lt}}+\sum_{i}^n\int_s^t\left(\mu-\mu_0\right)\nu(\tau) \NN{A_{tL}^{\hat{e}_i}u(\tau)}_{X^{\nu(\tau)}_{L\tau}}d\tau
	 \\
	 \leq \NN{u(s)}_{\dot X^{\nu(s)}_{Ls}}+\int_s^t\NN{\del_tu(\tau)}_{\dot X^{\nu(\tau)}_{L\tau}}d\tau
	 \end{multline*}
	 for $t>s\geq0$ and $u\in C^0([0,\infty),X)$ such that $u(t)\in Y$ for all $t\geq0$ and $t\mapsto A_{tL}^\alpha u(t)\in C^1((0,\infty),X)$ for all $\alpha\in\mathbb N_0^n$.
\end{proposition}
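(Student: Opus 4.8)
The plan is to convert the claimed integral estimate into a pointwise differential inequality for the scalar function $g(t):=\NN{u(t)}_{X^{\nu(t)}_{tL}}$ and then integrate over $[s,t]$. Recalling
\[
g(t)=\sum_{\alpha\in\mathbb N_0^n}\frac{\nu(t)^{|\alpha|}}{\alpha!}\NN{A_{tL}^\alpha u(t)}_{X_t},\qquad A_{tL}^\alpha=e^{tL}A^\alpha e^{-tL},
\]
there are exactly three sources of time dependence: the scalar weights $\nu(t)^{|\alpha|}$, the conjugation operators $A_{tL}^\alpha$, and the curve $u(t)$. The regularity assumptions on $u$ (namely $t\mapsto A_{tL}^\alpha u(t)\in C^1$ for every $\alpha$) are precisely what is needed to differentiate each summand, so I would first justify differentiating the series term by term and then collect the three contributions separately, using that the weights $\nu(t)^{|\alpha|}/\alpha!$ are nonnegative so that summing preserves the inequalities.

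Since $\nu'(t)=-\mu\,\nu(t)$, the derivative of the weights produces the \emph{good} negative term $-\mu\sum_\alpha\frac{|\alpha|\,\nu^{|\alpha|}}{\alpha!}\NN{A_{tL}^\alpha u}_{X_t}$. For the conjugation, the key identity is
\[
\del_t A_{tL}^\alpha=L\,A_{tL}^\alpha-A_{tL}^\alpha L=e^{tL}[L,A^\alpha]e^{-tL},
\]
valid because $L$ commutes with $e^{\pm tL}$. Applying (H4') to each summand together with the triangle inequality for $\del_t(A_{tL}^\alpha u)=e^{tL}[L,A^\alpha]e^{-tL}u+A_{tL}^\alpha\del_t u$, the $u$-derivative pieces assemble into $\NN{\del_t u}_{X^{\nu}_{tL}}$, while the conjugation pieces give the commutator sum $\sum_\alpha\frac{\nu^{|\alpha|}}{\alpha!}\NN{e^{tL}[L,A^\alpha]e^{-tL}u}_{X_t}$.

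The heart of the argument is to control this commutator sum. Writing $z=e^{-tL}u$ so that $e^{tL}[L,A^\alpha]e^{-tL}u=e^{tL}[L,A^\alpha]z$, I would invoke Lemma \ref{lem.R_N.leq.nu.Q_N2} (summing over all multi-indices via monotone convergence) to bound it by $\frac{nC\nu r}{(1-\nu r)^n}\sum_\alpha\frac{\nu^{|\alpha|}}{\alpha!}\sum_{i=1}^n\NN{A_{tL}^{\alpha+\hat{e}_i}u}_{X_t}$. A Cauchy-product re-indexing $\beta=\alpha+\hat{e}_i$, using $\frac{1}{(\beta-\hat{e}_i)!}=\frac{\beta_i}{\beta!}$ and $\sum_i\beta_i=|\beta|$, rewrites this double sum as $\frac1\nu\sum_\beta\frac{|\beta|\,\nu^{|\beta|}}{\beta!}\NN{A_{tL}^\beta u}_{X_t}$, so the commutator term equals $\frac{nCr}{(1-\nu r)^n}\sum_\beta\frac{|\beta|\,\nu^{|\beta|}}{\beta!}\NN{A_{tL}^\beta u}_{X_t}$. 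Since $\nu\mapsto(1-\nu r)^{-n}$ is increasing and $\nu(t)\le\nu_0$, this factor is at most $\mu_0$, so combining with the good negative term leaves $-(\mu-\mu_0)\sum_\beta\frac{|\beta|\,\nu^{|\beta|}}{\beta!}\NN{A_{tL}^\beta u}_{X_t}$. The same re-indexing identifies this last sum with $\nu(t)\sum_i\NN{A_{tL}^{\hat{e}_i}u}_{X^{\nu(t)}_{tL}}$. Hence $\del_t g(t)+(\mu-\mu_0)\nu(t)\sum_i\NN{A_{tL}^{\hat{e}_i}u}_{X^{\nu(t)}_{tL}}\le\NN{\del_t u}_{X^{\nu(t)}_{tL}}$ pointwise, and integrating over $[s,t]$ yields the claim.

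The main obstacle I anticipate is analytic rather than algebraic: rigorously justifying the term-by-term differentiation of the infinite analytic seminorm and the interchange of $\del_t$ with $\sum_\alpha$, which requires uniform control of the differentiated series on compact time intervals. This is exactly why the hypotheses demand $t\mapsto A_{tL}^\alpha u(t)\in C^1$ for every $\alpha$ and why (H4') is stated as an inequality between $\del_t\NN{x(t)}_{X_t}$ and $\NN{\del_t x(t)}_{X_t}$; in practice one would work with the upper Dini derivative of $g$ and integrate that. The combinatorial re-indexing, though routine, must also be carried out carefully, since the whole mechanism rests on the commutator bound being absorbed by \emph{precisely} the $\mu_0$-portion of the weight derivative, leaving the strictly positive surplus $(\mu-\mu_0)$.
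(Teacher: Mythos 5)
Your proposal is correct and follows essentially the same route as the paper: the same three-way decomposition of the time dependence of $\sum_\alpha\frac{\nu(t)^{|\alpha|}}{\alpha!}\NN{A^\alpha_{tL}u(t)}_{X_t}$, the same commutator identity $\del_t A^\alpha_{tL}=e^{tL}[L,A^\alpha]e^{-tL}$ combined with (H4'), the same use of Lemma \ref{lem.R_N.leq.nu.Q_N2} to absorb the commutator sum into the $\mu_0$-portion of the weight derivative, and the same re-indexing to produce the surplus $(\mu-\mu_0)\nu(\tau)\sum_i\NN{A^{\hat e_i}_{\tau L}u(\tau)}_{X^{\nu(\tau)}_{\tau}}$. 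The analytic difficulty you flag at the end is handled in the paper exactly as you anticipate: it works with the finite truncations $P_{u,N}$, $Q_N$, $R_N$, proves Lipschitz continuity and the differential inequality at fixed $N$ (where term-by-term differentiation is unproblematic), integrates, and only then lets $N\to\infty$ via monotone and dominated convergence.
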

\begin{proof}
		Let $0\leq s<t<\infty$. At first, we may assume that
	\begin{equation*}
	\tau\mapsto \NN{\del_tu(\tau)}_{\dot X^{\nu(\tau)}_{\tau}}\in L^1(s,t),
	\end{equation*}
	because the assertion is trivial otherwise.
	 For $\lambda\in[0,\infty)$, we define
	\[P_{u,N}(\lambda,t):=\sum_{0\lneqq\alpha\leq N}\frac{\lambda^{|\alpha|}}{\alpha!}\NN{A^{\alpha}_{tL}u(t)}_{X_t}\]
	and
	\[Q_N(\lambda,t):=\sum_{i=1}^n\sum_{\alpha\lneqq N}\frac{\lambda^{|\alpha|}}{\alpha!}\NN{A^{\alpha+\hat{e}_i}_{tL}u(t)}_{X_t}\]
	as well as
	\[R_N(\lambda,t):=\sum_{0\lneqq \alpha\leq N}\frac{\lambda^{|\alpha|}}{\alpha!}\NN{e^{tL}[L,A^{\alpha}]e^{-tL}u(t)}_{X_t}.\]
	Thus, $P_{u,N}(\lambda,t)\to \NN{u(t)}_{\dot X^\lambda_{t}}$ and $Q_N(\lambda,t)\to\sum_{i=1}^n  \NN{A_{tL}^{\hat{e}_i}u(t)}_{X^\lambda_{t}}$ as $N\to \infty$. 
	Let $\alpha\in\mathbb N_0^n$ and $0<s<t$. Since $\partial_t\|\cdot\|_{X_t}\leq \|\partial_t\cdot\|_{X_t}$, we have
	\begin{align*}
	&\N{\NN{A^{\alpha}_{tL}u(t)}_{X_t}-\NN{A^{\alpha}_{sL}u(s)}_{X_s}}
	\newln\leq \sup_{s\leq \tau\leq t}\NN{
		\del_\tau A^{\alpha}_{\tau L}u(\tau))
	}_{X_\tau}(t-s)
\newln\leq \sup_{s\leq \tau\leq t}\NN{
A^{\alpha}_{\tau L}\del_\tau u(\tau)
}_{X_\tau}(t-s)+ \sup_{s\leq \tau\leq t}\NN{
	e^{\tau L}[L,A^{\alpha}]e^{-\tau L}u(\tau))
}_{X_\tau}(t-s)
 	\end{align*}
	using
	\begin{align*}
	[\del_t ,A^{\alpha}_{tL}]y
	 &=e^{Lt}A^{\alpha}[\del_t,e^{-tL}]y+[\del_t,e^{L t}]A^{\alpha}e^{-tL}y
	 \\&=-e^{L t}LA^{\alpha}e^{-tL}y+e^{L t}A^{\alpha}Le^{-tL}y
	 \\&=e^{L t}[L,A^{\alpha}]e^{-tL}y
	\end{align*}
	for $y\in Y$. This implies
	\[\N{P_{u,N}(\lambda,t)-P_{u,N}(\lambda,s)}\leq  \sup_{s\leq \tau\leq t}\left(P_{\del_t u,N}(\lambda,\tau)+R_N(\lambda,\tau)\right)(t-s).\]
 The  estimate
\begin{equation*}
\frac{\nu(t)^{|\alpha|}-\nu(s)^{|\alpha|}}{{\alpha}!}\geq \sum_{i=1}^n\frac{\nu(t)^{|\alpha-\hat e_i|}}{(\alpha-\hat e_i)!}(\nu(t)-\nu(s))
\end{equation*}
entails
\begin{align*}
\N{P_{u,N}(\nu(t),t)-P_{u,N}(\nu(s),s)}&\leq  \sup_{s\leq \tau\leq t}\left(P_{\del_t u,N}(\nu(s),\tau)+R_N(\nu(s),\tau)\right) (t-s)\newln+ \sup_{s\leq \tau\leq t}{\dot\nu(\tau)  Q_N(\nu(t),t)}(t-s).
\end{align*}
Thus, $P_{u,N}(\nu(t),t)$ is Lipschitz continuous w.r.t.~$t$ and belongs to $W^{1,\infty}((0,T))$ with
\[\frac{d}{dt}P_{u,N}(\nu(t),t)\leq P_{\del_t u,N}(\nu(t),t)+R_N(\nu(t),t)+\dot\nu(t) Q_N(\nu(t),t),\]
since $P_{u,N}$, $P_{\del_tu,N}$ and $Q_N$ are continuous. By $P_{\del_t u,N}(\nu(\tau),\tau)\leq\NN{\del_tu(\tau)}_{X^{\nu(\tau)}_{L\tau}}\in L^1(0,T)$, the dominated convergence theorem implies  $\int_0^TP_{\del_t u,N}(\nu(\tau),\tau)d\tau\to \int_0^T\NN{\del_tu(\tau)}_{X^{\nu(\tau)}_{L\tau}}d\tau$ as $N\to\infty$. According to the monotone convergence theorem we have \[
\int_s^t\nu(\tau) Q_{N}(\nu(\tau),\tau)d\tau \to\sum_{i=1}^{n} \int_s^t\nu(\tau)\NN{A_{\tau L}^{\hat{e}_i}u(\tau)}_{X^{\nu(\tau)}_{L\tau}}d\tau.\]
Then Lemma \ref{lem.R_N.leq.nu.Q_N2} yields 
\begin{align*}
R_N(\nu(t),t)
&\leq 
\frac{nC\nu(t) r}{(1-\nu(t) r)^n}\sum_{\alpha\lneqq N}\frac{\nu(t)^{|\alpha|}}{\alpha!}\sum_{i=1}^n\NN{e^{tL}A^{\alpha+\hat{e}_i}e^{-tL}u(t)}_{X}
\\&\leq \mu_0\nu(t)  Q_N(\nu(t),t)
\end{align*}
for $N\in\mathbb N_0^n$ recalling  $\mu_0:=\frac{nC}{(1-\nu_0r)^n}$ and $\nu(t)\leq \nu_0$.  Finally, we obtain 
	\begin{align*}
&\NN{u(t)}_{\dot X^{\nu(t)}_{t}}+\sum_{i=1}^n\int_s^t\left((\mu-\mu_0)\nu(\tau) \NN{A_{tL}^{\hat{e}_i}u(\tau)}_{X^{\nu(\tau)}_{\tau}}-\NN{\del_tu(\tau)}_{\dot X^{\nu(\tau)}_{\tau}}\right)d\tau
\newln \leq \limsup_{N\to\infty}
P_{u,N}(\nu(t),t)-\int_s^t\left(\dot\nu(\tau) Q_N(\nu(\tau),\tau)+R_N(\nu(\tau),\tau)+P_{\del_t u,N}(\nu(\tau),\tau)\right) d\tau
\newln\ \ 
\leq \NN{u(s)}_{\dot X^{\nu(s)}_{s}}.
\end{align*}
This finishes the proof using $\dot\nu=-\mu\nu$.
\end{proof}
\begin{definition}
	Let $C^i_L([0,\infty);Y):= \{u:[0,\infty)\to Y \mbox{ s.t.~}t\mapsto A_{tL}^\alpha u(t)\in C^i([0,\infty),X),\ \alpha\in\mathbb N_0^n\}$ for $i\in\mathbb N_0$. We define $\Phi:C^0_L([0,\infty);Y)\to C^0_L([0,\infty);Y)$  by
	\[
	\Phi(u)(t):=u(0)+\int_0^t Q_{\tau L}(u(\tau))d\tau.
	\]
Let $\nu_0<1/r$ and
\[
\mu\geq\mu_0:=\frac{nCr}{(1-\nu_0r)^n}
\]
with $C$ as in (H2'). We define $\nu(t)=\nu_0\exp(-\mu t)$ and
\begin{align*}
\NN{u}_{\nu_0,\mu}:=\sup_{t\geq0}\left(\NN{u(t)}_{X^{\nu(t)}_t}+(\mu-\mu_0)\sum_{i=1}^n\int_0^t\nu(\tau)\NN{A^{\hat{e}_i}_{L\tau}u(\tau)}_{X_\tau^{\nu(\tau)}}d\tau\right)
\end{align*}
for $u\in C^0_L([0,\infty);Y)$.
\end{definition}
\begin{lemma}\label{lem.norm.estimate}	Let $\nu_0<1/r$ and assume that $\omega>\mu_0$. Then
	\[
	\NN{\Phi(u)}_{\nu_0,\omega}\leq
	\NN{u(0)}_{ X^{\nu_0}_{0}}+\max\left\{\frac{M_0}{\omega},\frac{M_{1}}{\nu_0(\omega-\mu_0)} \right\}\NN{u}_{\nu_0,\omega}^2.
	\]
\end{lemma}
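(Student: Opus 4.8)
The plan is to exploit that $\Phi(u)$ satisfies $\Phi(u)(0)=u(0)$ and $\del_t\Phi(u)(t)=Q_{tL}(u(t))$, so that the whole estimate reduces to controlling the accumulated collision term. Since $u\in C^0_L([0,\infty);Y)$ makes $\tau\mapsto Q_{\tau L}(u(\tau))$ continuous, $\Phi(u)\in C^1_L([0,\infty);Y)$ and Proposition \ref{prop.time.estimate} applies to $v:=\Phi(u)$. First I would invoke that proposition with $s=0$ and $\mu=\omega$: as $\nu(t)=\nu_0\exp(-\omega t)$ it bounds the homogeneous part $\NN{\Phi(u)(t)}_{\dot X^{\nu(t)}_t}$ together with the dissipation integral $(\omega-\mu_0)\sum_i\int_0^t\nu\NN{A^{\hat{e}_i}_{L\tau}\Phi(u)}_{X^{\nu}_\tau}d\tau$ by $\NN{u(0)}_{\dot X^{\nu_0}_0}+\int_0^t\NN{Q_{\tau L}(u)}_{\dot X^{\nu(\tau)}_\tau}d\tau$. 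For the missing $\alpha=0$ contribution I would apply (H4') to $t\mapsto\Phi(u)(t)$, giving $\NN{\Phi(u)(t)}_{X_t}\leq\NN{u(0)}_{X_0}+\int_0^t\NN{Q_{\tau L}(u)}_{X_\tau}d\tau$. Recombining the $\alpha=0$ and $\alpha\neq0$ pieces of both the norm and of $Q_{\tau L}$, and dominating the supremum over $t$ and the integrals over $[0,t]$ by their $t\to\infty$ limits, yields
\[
\NN{\Phi(u)}_{\nu_0,\omega}\leq\NN{u(0)}_{X^{\nu_0}_0}+\int_0^\infty\NN{Q_{\tau L}(u(\tau))}_{X^{\nu(\tau)}_\tau}\,d\tau.
\]

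Next I would estimate the integrand with the submultiplicativity Lemma \ref{lem.submul}, which gives $\NN{Q_{\tau L}(u)}_{X^{\nu(\tau)}_\tau}\leq e^{-\omega\tau}a(\tau)\big(M_0 a(\tau)+M_1 b(\tau)\big)$, where I abbreviate $a(\tau):=\NN{u(\tau)}_{X^{\nu(\tau)}_\tau}$ and $b(\tau):=\sum_i\NN{A^{\hat{e}_i}_{L\tau}u(\tau)}_{X^{\nu(\tau)}_\tau}$. The two structural facts to be read off from the definition of $\NN{\cdot}_{\nu_0,\omega}$ are $a(\tau)\leq\NN{u}_{\nu_0,\omega}$ for every $\tau$ and $(\omega-\mu_0)\int_0^\infty\nu(\tau)b(\tau)\,d\tau\leq\NN{u}_{\nu_0,\omega}$, both obtained by discarding one nonnegative summand in the supremum.

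The decisive algebraic step is to convert the exponential weight into the weight of the dissipation integral via $e^{-\omega\tau}=\nu(\tau)/\nu_0$. Pulling out one factor $a(\tau)\leq\NN{u}_{\nu_0,\omega}$ and using $M_0 a+M_1 b\leq\max\{M_0/\omega,\,M_1/(\nu_0(\omega-\mu_0))\}\,(\omega a+\nu_0(\omega-\mu_0)b)$ turns the remaining integral into $\int_0^\infty\big(\omega e^{-\omega\tau}a+(\omega-\mu_0)\nu(\tau)b\big)d\tau$. Here the first summand is controlled by $\omega\int_0^\infty e^{-\omega\tau}d\tau=1$ times the uniform bound on $a$, and the second by the dissipation budget; this is exactly where the maximum of $M_0/\omega$ and $M_1/(\nu_0(\omega-\mu_0))$ is meant to appear in front of $\NN{u}_{\nu_0,\omega}^2$.

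The step I expect to be the main obstacle is precisely this last one: the instantaneous norm $a(\tau)$ and the accumulated dissipation $(\omega-\mu_0)\int\nu b$ are bounded by the \emph{same} quantity $\NN{u}_{\nu_0,\omega}$, so a naive term-by-term estimate double-counts this single budget and produces the sum $M_0/\omega+M_1/(\nu_0(\omega-\mu_0))$ (or, after the crude factorization above, twice the maximum). To reach the sharp coefficient one must spend the budget only once, using that $a$ cannot remain close to $\NN{u}_{\nu_0,\omega}$ while the dissipation integral simultaneously saturates it — a consequence of the monotone coupling $a(\tau)+(\omega-\mu_0)\int_0^\tau\nu b\leq\NN{u}_{\nu_0,\omega}$ built into the definition of the norm, reinforced by the pointwise bound $\nu(\tau)b(\tau)\leq a(\tau)$ that is forced by the analytic structure of $X^{\nu}_t$. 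Carrying out this bookkeeping carefully gives the quadratic bound and hence the claimed self-mapping estimate for $\Phi$; the corresponding Lipschitz estimate would follow along identical lines with Lemma \ref{lem.submul.Lip} in place of Lemma \ref{lem.submul}.
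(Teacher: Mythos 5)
Your route is the paper's own: apply Proposition \ref{prop.time.estimate} to $\Phi(u)$ with $\mu=\omega$ to reduce everything to $\NN{u(0)}_{X^{\nu_0}_0}+\int_0^\infty\NN{Q_{\tau L}(u(\tau))}_{X^{\nu(\tau)}_\tau}d\tau$, then invoke Lemma \ref{lem.submul} and split the integrand into the $\beta=0$ part (paid for by $\int_0^\infty e^{-\omega\tau}d\tau=1/\omega$ against the sup-in-time component of $\NN{\cdot}_{\nu_0,\omega}$) and the $|\beta|=1$ part (converted via $\nu_0e^{-\omega\tau}=\nu(\tau)$ into the dissipation component). Your side remark about the $\alpha=0$ term is also sound: it is absent from the $P_{u,N}$ in the Proposition's proof and is recovered from (H4') (equivalently, since $[L,A^0]=0$ the Proposition holds verbatim for the full norm).

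The one place where you go beyond the paper is the final bookkeeping, and there your argument breaks. The inequality $\nu(\tau)b(\tau)\leq a(\tau)$ on which you lean is false: with $a=\sum_\alpha\frac{\nu^{|\alpha|}}{\alpha!}\NN{A^\alpha_{\tau L}u}_X$ one computes $\nu b=\sum_\alpha|\alpha|\frac{\nu^{|\alpha|}}{\alpha!}\NN{A^\alpha_{\tau L}u}_X$, which dominates the homogeneous part of $a$ rather than being dominated by $a$. Nor does the coupling $a(\tau)+(\omega-\mu_0)\int_0^\tau\nu b\,d\sigma\leq\NN{u}_{\nu_0,\omega}$ by itself prevent $\omega\int_0^\infty e^{-\omega\tau}a\,d\tau+(\omega-\mu_0)\int_0^\infty\nu b\,d\tau$ from approaching $2\NN{u}_{\nu_0,\omega}$ (take $a\approx\NN{u}_{\nu_0,\omega}$ on $[0,T]$ with the dissipation accumulating only after $T\gg1/\omega$), so the ``single budget'' cannot be spent only once by this reasoning. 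The paper does not perform the refinement you are after either: it estimates the $\beta=0$ and $|\beta|=1$ integrals separately, each against $\NN{u}_{\nu_0,\omega}$ times the corresponding fraction, which literally yields the sum $\frac{M_0}{\omega}+\frac{M_1}{\nu_0(\omega-\mu_0)}$ where the statement writes the maximum. This discrepancy is immaterial downstream (it only rescales $R$ and $\varepsilon$ in Proposition \ref{prop.general}, and in the application to \eqref{2.be} one has $M_0=0$, so maximum and sum coincide). In short: replace your unproven sharp constant by the sum (or a factor $2$ on the maximum) and your proof is complete and coincides with the paper's.
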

	\begin{proof} Applying Proposition \ref{prop.time.estimate}
		to $\Phi(u)$, we obtain
 \begin{align*}
\NN{\Phi(u)}_{\nu_0,\omega}&= \sup_{t\geq0}\left(\NN{\Phi(u)(t)}_{ X^{\nu(t)}_{t}}+(\omega-\mu_0)\sum_{i=1}^n\int_0^t\nu(\tau) \NN{A_{tL}^{\hat{e}_i}\Phi(u)(\tau)}_{X^{\nu(\tau)}_{\tau}}d\tau\right)
\\
&\leq \NN{\Phi(u)(0)}_{ X^{\nu(0)}_{0}}+\int_0^\infty\NN{\partial_t \Phi(u(\tau))}_{ X^{\nu(\tau)}_{\tau}}d\tau\\
\\
&= \NN{u(0)}_{ X^{\nu(0)}_{0}}+\int_0^\infty\NN{Q_{\tau L}(u(\tau))}_{ X^{\nu(\tau)}_{\tau}}d\tau.
\end{align*}
Thus,
\begin{align*}
\NN{\Phi(u)}_{\nu_0,\omega}
&\leq \NN{u(0)}_{ X^{\nu(0)}_{0}}+
\int_0^\infty
e^{-\omega \tau}\NN{u(\tau)}_{X^{\nu}_{\tau}}\sum_{\N{\beta}\leq 1}M_{|\beta|}\NN{A_{L\tau}^\beta u(\tau)}_{X^{\nu}_{\tau}}d\tau\\
&\leq \NN{u(0)}_{ X^{\nu(0)}_{0}}+\NN{u}_{\nu_0,\omega}\sum_{\N{\beta}\leq 1}
\int_0^\infty
e^{-\omega \tau}M_{|\beta|}\NN{A_{L\tau}^\beta u(\tau)}_{X^{\nu}_{\tau}}d\tau.
\end{align*}
For $\beta=0$, we estimate
\begin{align*}
\int_0^\infty
e^{-\omega \tau}M_{0}\NN{A_{L\tau}^\beta u(\tau)}_{X^{\nu}_{\tau}}d\tau
&=
\int_0^\infty
e^{-\omega \tau}d\tau \sup_{0\leq\tau<\infty}M_{0}\NN{ u(\tau)}_{X^{\nu}_{\tau}}
\leq\frac{M_0}{\omega}\| u\|_{\nu_0,\omega}.
\end{align*}
In the remaining cases where $|\beta|=1$, we have
\begin{align*}
\int_0^\infty
e^{-\omega \tau}M_{1}\NN{A_{L\tau}^\beta u(\tau)}_{X^{\nu}_{\tau}}d\tau
&\leq 
\frac{M_1}{\nu_0(\omega-\mu_0)}(\omega-\mu_0)\int_0^\infty
\nu_0e^{-\omega \tau}\NN{A_{L\tau}^\beta u(\tau)}_{X^{\nu}_{\tau}}d\tau
\\&\leq \frac{M_1}{\nu_0(\omega-\mu_0)}\| u\|_{\nu_0,\omega}.
\end{align*}
Finally, we conclude with
\begin{align*}
\NN{\Phi(u)}_{\nu_0,\omega}
&\leq 
  \NN{u(0)}_{ X^{\nu(0)}_{0}}+\max\left\{\frac{M_0}{\omega},\frac{M_{1}}{\nu_0(\omega-\mu_0)} \right\}\NN{u}_{\nu_0,\omega}^2
\end{align*}
the assertion.
\end{proof}
\begin{lemma}With the same hypothesis as in the previous lemma, let $u,v\in C_L^0([0,\infty),Y)$ such that $R=\max\{\NN{u}_{\nu_0,\omega},\NN{v}_{\nu_0,\omega}\}$. Then
			\begin{equation*}
	\NN{\Phi(u)-\Phi(v)}_{\nu_0,\omega}\leq\NN{u(0)-v(0)}_{ X^{\nu(0)}_{0}}
	+ 
		2R\max\left\{\frac{M'_0}{\omega},\frac{M'_1}{\nu_0(\omega-\mu_0)} \right\}\NN{u-v}_{\nu_0,\omega}
	\end{equation*}
\end{lemma}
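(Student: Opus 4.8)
The plan is to reproduce the proof of Lemma~\ref{lem.norm.estimate} almost verbatim, replacing the submultiplicative bound of Lemma~\ref{lem.submul} by its Lipschitz counterpart Lemma~\ref{lem.submul.Lip}, and replacing the quadratic bookkeeping $\NN{u}^2$ by the bilinear bookkeeping in which one factor always carries the difference $u-v$ and the remaining factor is controlled by $R$. First I would apply Proposition~\ref{prop.time.estimate} to $w:=\Phi(u)-\Phi(v)$. Since $\Phi(u)(0)-\Phi(v)(0)=u(0)-v(0)$ and $\partial_\tau w(\tau)=Q_{\tau L}(u(\tau))-Q_{\tau L}(v(\tau))$, choosing $s=0$, $\mu=\omega$ and taking the supremum over $t$ gives, exactly as before,
\[
\NN{\Phi(u)-\Phi(v)}_{\nu_0,\omega}\leq\NN{u(0)-v(0)}_{X^{\nu(0)}_{0}}+\int_0^\infty\NN{Q_{\tau L}(u(\tau))-Q_{\tau L}(v(\tau))}_{X^{\nu(\tau)}_{\tau}}\,d\tau,
\]
once one checks that $w$ satisfies the regularity hypotheses of the proposition, which holds because $\Phi$ maps $C^0_L([0,\infty);Y)$ into itself and $\partial_\tau\Phi(u)=Q_{\tau L}(u(\tau))$.

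Next I would insert Lemma~\ref{lem.submul.Lip} with $y_2=u(\tau)$, $y_1=v(\tau)$, producing under the integral the factor $e^{-\omega\tau}\sum_{\N\beta\leq1}M'_{|\beta|}\big(\NN{u}_{X^{\nu(\tau)}_\tau}\NN{A^\beta_{L\tau}(u-v)}_{X^{\nu(\tau)}_\tau}+\NN{u-v}_{X^{\nu(\tau)}_\tau}\NN{A^\beta_{L\tau}v}_{X^{\nu(\tau)}_\tau}\big)$. I would then integrate these four contributions ($\beta=0$ versus $\N\beta=1$, and the two summands) precisely as in Lemma~\ref{lem.norm.estimate}: whenever a $\beta=0$ term occurs I use $\int_0^\infty e^{-\omega\tau}\,d\tau=1/\omega$, and whenever a $\N\beta=1$ term occurs I use $\nu(\tau)=\nu_0 e^{-\omega\tau}$ to rewrite $\int_0^\infty e^{-\omega\tau}\sum_i\NN{A^{\hat e_i}_{L\tau}(\cdot)}_{X^{\nu(\tau)}_\tau}\,d\tau=\tfrac1{\nu_0}\int_0^\infty\nu(\tau)\sum_i\NN{A^{\hat e_i}_{L\tau}(\cdot)}_{X^{\nu(\tau)}_\tau}\,d\tau$, which is bounded by $\tfrac1{\nu_0(\omega-\mu_0)}\NN{\cdot}_{\nu_0,\omega}$ directly from the definition of $\NN{\cdot}_{\nu_0,\omega}$. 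In the first summand the surviving undifferentiated factor $\NN{u}_{X^{\nu(\tau)}_\tau}$ is bounded by $\NN{u}_{\nu_0,\omega}\leq R$; in the second summand one instead pulls the undifferentiated factor $\NN{u-v}_{X^{\nu(\tau)}_\tau}\leq\NN{u-v}_{\nu_0,\omega}$ out of the integral and absorbs $\int_0^\infty e^{-\omega\tau}\sum_i\NN{A^{\hat e_i}_{L\tau}v}_{X^{\nu(\tau)}_\tau}\,d\tau$ into $\NN{v}_{\nu_0,\omega}\leq R$.

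Collecting the estimates, each of the two summands contributes $R\max\{M'_0/\omega,\ M'_1/(\nu_0(\omega-\mu_0))\}\NN{u-v}_{\nu_0,\omega}$, the maximum emerging exactly as in Lemma~\ref{lem.norm.estimate} from the fact that $\NN{\cdot}_{\nu_0,\omega}$ dominates both the supremum part and the weighted-integral part in its definition; adding the two summands produces the factor $2R$ and the claimed inequality. I expect the only genuinely delicate point to be the bookkeeping of the second summand, where the derivative $A^\beta_{L\tau}$ falls on $v$ rather than on $u-v$: there one must route the undifferentiated $\NN{u-v}_{X^{\nu(\tau)}_\tau}$ out as the supremum factor while sending the differentiated $v$-integral into the $R$-bound, the exact mirror image of the treatment of the first summand. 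Everything else is the routine repetition of the previous lemma's computation.
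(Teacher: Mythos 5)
Your proposal is correct and follows essentially the same route as the paper: apply Proposition~\ref{prop.time.estimate} to $\Phi(u)-\Phi(v)$, invoke the Lipschitz counterpart of the submultiplicativity estimate (Lemma~\ref{lem.submul.Lip}, i.e.\ (H3b')), and repeat the $\beta=0$ versus $|\beta|=1$ integration bookkeeping of Lemma~\ref{lem.norm.estimate}. The only cosmetic difference is that the paper carries a supremum over the segment $\xi\in[u,v]$ (bounding $\NN{\xi}_{\nu_0,\omega}\leq R$ by convexity) whereas you work directly with the endpoints $u$ and $v$, which yields the same bound $R$.
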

\begin{proof}
	Let $u,v\in C_L^0([0,\infty);Y)$ such that $\NN{u}_{\nu,\mu},\NN{v}_{\nu,\mu}\leq R$. We have
	\begin{align*}
	\NN{\Phi(u)-\Phi(v)}_{\nu_0,\omega}
		&=
			 \sup_{t\geq0}\bigg(\NN{\Phi(u)(t)-\Phi(v)(t)}_{ X^{\nu(t)}_{t}}
		\\& \quad
	+(\omega-\mu_0)\sum_{i}^n\int_0^t\nu(\tau) \NN{A_{tL}^{\hat{e}_i}(\Phi(u)(\tau)-\Phi(v)(\tau))}_{X^{\nu(\tau)}_{\tau}}d\tau\bigg)
	\\
	&\leq 
		\NN{u(0)-v(0)}_{ X^{\nu(0)}_{0}}+\int_0^\infty\NN{Q_{\tau L}(u(\tau))-Q_{\tau L}(v(\tau))}_{ X^{\nu(\tau)}_{\tau}}d\tau
		\end{align*}
		For the next step, we have to use the condition (H3b) and proceed similarly as in the proof of Lemma \ref{lem.norm.estimate}. Note that for $\xi\in	[v,u]:=\{t\mapsto s(t)v(t)+(1-s(t))u(t):s(t)\in[0,1]\}$, we observe that $\NN{\xi}_{\nu_0,\omega}\leq R$. Thus,
		\begin{multline*}
		\NN{\Phi(u)-\Phi(v)}_{\nu_0,\omega}-\NN{u(0)-v(0)}_{ X^{\nu(0)}_{0}}
	\\
	\begin{aligned}
	&\leq \sup_{\xi\in[u,v]}
		\int_0^\infty	e^{-\omega \tau}\NN{u(\tau)-v(\tau)}_{X^{\nu}_{\tau}}\sum_{\N{\beta}\leq 1}M'_{|\beta|}\NN{A_{L\tau}^\beta \xi(\tau)}_{X^{\nu}_{\tau}}d\tau
	\newln+
		\sup_{\xi\in[u,v]}
		\int_0^\infty	e^{-\omega \tau}\NN{\xi(\tau)}_{X^{\nu}_{\tau}}\sum_{\N{\beta}\leq 1}M'_{|\beta|}\NN{A_{L\tau}^\beta (u(\tau)-v(\tau))}_{X^{\nu}_{\tau}}d\tau
	\\
	&\leq 
	2R\max\left\{\frac{M'_0}{\omega},\frac{M'_1}{\nu_0(\omega-\mu_0)} \right\}\NN{u-v}_{\nu_0,\omega}.
\end{aligned}
	\end{multline*}
This terminates the proof.
\end{proof}
\begin{definition}
	Let $Z$ denote the subspace of $C_L^0([0,\infty),Y)$ such that $\NN{u}_{\nu,\mu}<\infty$ for all $u\in Z$. Note that $Z$ endowed with $\NN{\cdot}_{\nu_0,\omega}$ is a Banach space. For $R>0$, we define $Z_R:=\{u\in Z: \NN{u}_{\nu_0,\omega}\leq R$ and $u(0):=u_0\}$.
\end{definition}

\begin{proposition}\label{prop.general}
	Let $\nu_0<\frac1r(1-\sqrt[n]{\frac{nCr}{\omega}})$, and let $R>0$ satisfy
	\[ C_0:=1-R\max\left\{\frac{M_0}{\omega},\frac{M_{1}}{\nu_0(\omega-\mu_0)} \right\}>0,\quad C_1:=1-2R\max\left\{\frac{M'_0}{\omega},\frac{M'_{1}}{\nu_0(\omega-\mu_0)} \right\}>0,\]
	where $\mu_0=\frac{nCr}{(1-\nu_0r)^n}>\omega$.	Then for all $u_0\in Y$ with  
\begin{equation}\label{con.on.u_0}
	\NN{u_0}_{ X^{\nu_0}_{0}}\leq C_0R,
\end{equation}
	the equation \eqref{d_tu=Q_t} has a unique solution $u$  in $Z_R$ satisfying $u|_{t=0}=u_0$. Moreover, let $u_0,w_0$ satisfy \eqref{con.on.u_0} and let $u,w$ be the solution of \eqref{d_tu=Q_t} with $u(0)=u_0$ and  $w(0)=w_0$, respectively. Then
	\begin{equation*}
	C_1\|u-w\|_{\nu_0,\omega}\leq \|u_0-w_0\|_{X_0^{\nu_0}}.
	\end{equation*}
	
\end{proposition}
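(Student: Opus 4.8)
The plan is to prove Proposition~\ref{prop.general} via the Banach fixed-point theorem applied to the map $\Phi$ on the closed ball $Z_R$ inside the Banach space $(Z,\NN{\cdot}_{\nu_0,\omega})$. The two preceding lemmas have already done the analytic heavy lifting: the first gives the quadratic self-bound
\[
\NN{\Phi(u)}_{\nu_0,\omega}\leq\NN{u_0}_{X^{\nu_0}_0}+\max\Big\{\tfrac{M_0}{\omega},\tfrac{M_1}{\nu_0(\omega-\mu_0)}\Big\}\NN{u}_{\nu_0,\omega}^2,
\]
and the second gives the Lipschitz bound with constant $2R\max\{M'_0/\omega,\,M'_1/(\nu_0(\omega-\mu_0))\}$ on $Z_R$. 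So the work of the proposition itself is purely structural: check that $\Phi$ maps $Z_R$ into itself and is a contraction there, then invoke the fixed-point theorem and translate the fixed point of $\Phi$ back into a solution of \eqref{d_tu=Q_t}.

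First I would verify the self-mapping property. For $u\in Z_R$ we have $u(0)=u_0$ with $\NN{u_0}_{X^{\nu_0}_0}\leq C_0R$ by \eqref{con.on.u_0}, and $\NN{u}_{\nu_0,\omega}\leq R$. Feeding these into the first lemma and using the definition $C_0=1-R\max\{M_0/\omega,\,M_1/(\nu_0(\omega-\mu_0))\}$ gives
\[
\NN{\Phi(u)}_{\nu_0,\omega}\leq C_0R+\big(1-C_0\big)R=R,
\]
so $\Phi(u)\in Z_R$ (note $\Phi(u)(0)=u_0$ by the definition of $\Phi$, so the boundary condition is preserved). Next I would verify the contraction property: for $u,w\in Z_R$ we have $R=\max\{\NN{u}_{\nu_0,\omega},\NN{w}_{\nu_0,\omega}\}\leq R$ (taking the common bound $R$), so the second lemma yields
\[
\NN{\Phi(u)-\Phi(w)}_{\nu_0,\omega}\leq (1-C_1)\NN{u-w}_{\nu_0,\omega},
\]
and the hypothesis $C_1>0$ forces $1-C_1<1$, making $\Phi$ a strict contraction on $Z_R$. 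Since $Z_R$ is a closed subset of the Banach space $Z$, Banach's fixed-point theorem produces a unique $u\in Z_R$ with $\Phi(u)=u$, which by the definition of $\Phi$ is precisely the integral form $u(t)=u_0+\int_0^t Q_{\tau L}(u(\tau))\,d\tau$, hence a solution of \eqref{d_tu=Q_t} with $u|_{t=0}=u_0$.

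For the stability estimate I would not re-run the contraction but compare the two fixed points directly. Writing $u=\Phi(u)$ and $w=\Phi(w)$ and using the first line of the second lemma's conclusion before the final constants are absorbed, one gets
\[
\NN{u-w}_{\nu_0,\omega}\leq\NN{u_0-w_0}_{X^{\nu_0}_0}+(1-C_1)\NN{u-w}_{\nu_0,\omega},
\]
and rearranging gives exactly $C_1\NN{u-w}_{\nu_0,\omega}\leq\NN{u_0-w_0}_{X^{\nu_0}_0}$ as claimed. The one point requiring genuine care—and the main obstacle—is justifying that the fixed point $u$ is actually a \emph{strong} solution rather than merely a mild/integral one: one must check that $t\mapsto A^\alpha_{tL}u(t)$ is $C^1$ for each $\alpha$ so that the differentiated equation $\del_t u=Q_{tL}(u)$ holds pointwise, which is where the regularity built into the space $C^0_L([0,\infty);Y)$ and the continuity of $\tau\mapsto Q_{\tau L}(u(\tau))$ in each seminorm must be invoked to differentiate the integral. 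The compatibility condition $\nu_0<\frac1r(1-\sqrt[n]{nCr/\omega})$ is what guarantees $\mu_0<\omega$ (so that $\omega-\mu_0>0$ and the denominators in the constants are positive), and I would note at the outset that this is exactly the range in which the whole scheme is admissible.
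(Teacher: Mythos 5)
Your proposal is correct and follows exactly the paper's route: the paper's own proof is the one-line observation that the two preceding lemmas combined with the Banach fixed-point theorem make $\Phi$ a self-map and contraction on $Z_R$, with the stability estimate obtained by comparing the two fixed points, and it likewise disposes of the strong-solution issue by appealing to the regularity built into $Z$. Your version merely spells out the arithmetic ($C_0R+(1-C_0)R=R$, Lipschitz constant $1-C_1<1$) and correctly reads the statement's $\mu_0>\omega$ as a typo for $\mu_0<\omega$.
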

\begin{proof}
	We combine the last two lemmata with the Banach fixed-point theorem to see that $\Phi:Z_R\to Z_R$ is a contraction and admits a unique fixed point $u$. By the definition of $Z$ we easily see that $u$ is differentiable with w.r.t.~$t$ in $X$ such that $u$ is a strong solution of \eqref{d_tu=Q_t}.
\end{proof}
\begin{theorem}\label{main.thm1}
	Let $\omega,C,r$ be as in (H2'),(H3a') and (H3b').  Then for every positive $\nu_0<\frac1r(1-\sqrt[n]{\frac{nCr}{\omega}})$, there exists an $\varepsilon>0$ such that if 
	\begin{equation}\label{con.on.u_0.2}
	\|u_0\|_{X_0^{\nu}}\leq \varepsilon \nu
	\end{equation} 
	for some $\nu\leq \nu_0$, then \eqref{d_tu=Q_t} has a strong solution $u$ with $u|_{t=0}=u_0$, with
	\begin{equation*}
	\|u(t)\|_{X_t^{\nu e^{-\omega t}}}\leq 2\varepsilon\nu\quad\mbox{for all }t\geq0.
	\end{equation*}
	Moreover, for all $u_0,w_0\in Y$ satisfying \eqref{con.on.u_0.2}, we have 
		\begin{equation*}
	\|u(t)-w(t)\|_{X_t^{\nu e^{-\omega t}}}\leq 2\|u_0-w_0\|_{X_0^{\nu}}\quad\mbox{for all }t\geq0,
	\end{equation*}
	where $u,w$ are the solution of \eqref{d_tu=Q_t} with $u(0)=u_0$ and $w(0)=w_0$, respectively.
\end{theorem}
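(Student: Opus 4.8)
The plan is to derive Theorem \ref{main.thm1} directly from Proposition \ref{prop.general} by a scaling argument, in which the prescribed value $\nu\le\nu_0$ plays the role of the parameter denoted $\nu_0$ inside the proposition. Throughout, write $\mu_0(\nu):=\frac{nCr}{(1-\nu r)^n}$ for the constant called $\mu_0$ in Proposition \ref{prop.general} when it is instantiated at the value $\nu$. The admissibility hypothesis $\nu_0<\frac1r(1-\sqrt[n]{nCr/\omega})$ is precisely the statement that $\mu_0(\nu_0)<\omega$; since $\mu_0$ is increasing in its argument, every $\nu\le\nu_0$ satisfies $\mu_0(\nu)\le\mu_0(\nu_0)<\omega$, and hence $\omega-\mu_0(\nu)\ge\omega-\mu_0(\nu_0)>0$ uniformly in $\nu$.

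Given a parameter $\varepsilon>0$ (to be fixed below) and any $\nu\le\nu_0$, I set $R:=2\varepsilon\nu$ and apply Proposition \ref{prop.general} with $\nu$ in place of $\nu_0$. The crucial observation is that the two products governing the contraction constants $C_0$ and $C_1$ are small \emph{uniformly} in $\nu$:
\[
R\,\frac{M_1}{\nu(\omega-\mu_0(\nu))}=\frac{2\varepsilon M_1}{\omega-\mu_0(\nu)}\le\frac{2\varepsilon M_1}{\omega-\mu_0(\nu_0)},\qquad
R\,\frac{M_0}{\omega}=\frac{2\varepsilon\nu M_0}{\omega}\le\frac{2\varepsilon\nu_0 M_0}{\omega},
\]
where the factor $\nu$ cancels in the first identity. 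Setting $K:=\max\{\nu_0M_0/\omega,\;M_1/(\omega-\mu_0(\nu_0))\}$ and, analogously with the primed constants, $K':=\max\{\nu_0M_0'/\omega,\;M_1'/(\omega-\mu_0(\nu_0))\}$, we obtain $C_0\ge1-2\varepsilon K$ and $C_1\ge1-4\varepsilon K'$, both independent of $\nu$. I now fix $\varepsilon>0$ small enough that $2\varepsilon K\le\tfrac12$ and $4\varepsilon K'\le\tfrac12$ (any $\varepsilon>0$ works should $K=K'=0$), which guarantees $C_0\ge\tfrac12$ and $C_1\ge\tfrac12$ for every $\nu\le\nu_0$.

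With $\varepsilon$ so chosen, the smallness hypothesis \eqref{con.on.u_0} of Proposition \ref{prop.general} is met: from \eqref{con.on.u_0.2} and $C_0\ge\tfrac12$ we get $\NN{u_0}_{X^{\nu}_{0}}\le\varepsilon\nu=\tfrac12R\le C_0R$. The proposition then produces a strong solution $u\in Z_R$ of \eqref{d_tu=Q_t} with $\NN{u}_{\nu,\omega}\le R=2\varepsilon\nu$. Because the integral contribution in the definition of $\NN{\cdot}_{\nu,\omega}$ is nonnegative and the first term there equals $\NN{u(t)}_{X^{\nu e^{-\omega t}}_{t}}$, discarding the integral gives $\NN{u(t)}_{X^{\nu e^{-\omega t}}_{t}}\le\NN{u}_{\nu,\omega}\le2\varepsilon\nu$ for all $t\ge0$, the asserted decay. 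For two data $u_0,w_0$ satisfying \eqref{con.on.u_0.2}, the Lipschitz bound of the proposition reads $C_1\NN{u-w}_{\nu,\omega}\le\NN{u_0-w_0}_{X^{\nu}_{0}}$; invoking $C_1\ge\tfrac12$ and again dropping the nonnegative integral yields $\NN{u(t)-w(t)}_{X^{\nu e^{-\omega t}}_{t}}\le\NN{u-w}_{\nu,\omega}\le2\NN{u_0-w_0}_{X^{\nu}_{0}}$.

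The only genuinely delicate point is the uniformity of $C_0$ and $C_1$ across the whole admissible range $\nu\le\nu_0$; this is what permits a single threshold $\varepsilon$ to serve all $\nu$ simultaneously. It rests on the exact cancellation of the factor $\nu$ in the $M_1$- and $M_1'$-terms (the $\nu$ in the denominator $\nu(\omega-\mu_0(\nu))$ matching the $\nu$ concealed in the choice $R=2\varepsilon\nu$), together with the monotonicity bound $\omega-\mu_0(\nu)\ge\omega-\mu_0(\nu_0)>0$. Once this uniformity is in hand, the statement is a direct transcription of the already-established fixed-point result of Proposition \ref{prop.general}.
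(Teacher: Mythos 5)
Your proposal is correct and follows essentially the same route as the paper: both apply Proposition \ref{prop.general} with $\nu$ in place of $\nu_0$ and choose $R$ proportional to $\nu$ so that the factor $\nu$ cancels against the denominator in the $M_1$-term, yielding constants $C_0,C_1$ bounded below uniformly in $\nu\le\nu_0$ and hence a single admissible $\varepsilon$. The only cosmetic differences are the parametrization ($R=2\varepsilon\nu$ versus the paper's $R=R'\nu$ with $\varepsilon$ defined from $R'$) and that the paper deduces the a priori decay bound from the Lipschitz estimate with $w\equiv0$, while you read it off directly from $u\in Z_R$; your explicit use of the monotonicity of $\nu\mapsto\frac{nCr}{(1-\nu r)^n}$ makes precise a point the paper leaves implicit.
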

\begin{proof}
 First, we recall $\mu_0:=\frac{nCr}{(1-\nu_0r)^n}$. We choose $R'>0$ such that
		\[ R'<\min\left\{\frac{\omega}{M_0\nu_0},\frac{\omega-\mu_0}{M_{1}} \right\},\quad R'\leq\frac14\min\left\{\frac{\omega}{M'_0\nu_0},\frac{\omega-\mu_0}{M'_{1}} \right\}.\]
		With this, we define $R:=R'\nu$ and $\varepsilon:=1-R'\max\left\{\frac{M_0\nu_0}{\omega},\frac{M_{1}}{\omega-\mu_0} \right\}$. Thus,
		\[C_0:=1-R\max\left\{\frac{M_0}{\omega},\frac{M_{1}}{\nu(\omega-\mu_0)} \right\}\geq \varepsilon>0.\]
		Likewise,
		\[C_1:=1-2R\max\left\{\frac{M'_0}{\omega},\frac{M'_{1}}{\nu(\omega-\mu_0)} \right\}\geq1-2R'\max\left\{\frac{M'_0\nu_0}{\omega},\frac{M'_{1}}{(\omega-\mu_0)} \right\}\geq\frac12>0.\]
		Finally, we obtain the assertion by applying the theorem. Note that the estimate \begin{equation*}
		\|u(t)\|_{X_t^{\nu e^{-\omega t}}}\leq 2\varepsilon\nu\quad\mbox{for all }t\geq0
		\end{equation*}
		is a direct consequence of
		\begin{equation*}
		\|u(t)-w(t)\|_{X_t^{\nu e^{-\omega t}}}\leq 2\|u_0-w_0\|_{X_0^{\nu}}\quad\mbox{for all }t\geq0
		\end{equation*}
		for $w(t)=w_0=0$ and $\|u_0\|_{X_0^{\nu}}\leq \varepsilon \nu$.
\end{proof}
\begin{remark}
	By shrinking $R'>0$ such that 	\[ R'<\min\left\{\frac{\omega}{M_0\nu_0},\frac{\omega-\mu_0}{M_{1}} \right\},\quad R'\leq\frac\gamma{2}\min\left\{\frac{\omega}{M'_0\nu_0},\frac{\omega-\mu_0}{M'_{1}} \right\}\]
	is satisfied for a fixed positive $\gamma\in(0,1)$. We can show similarly as in the previous proof that 
			\begin{equation*}
			\|u(t)-w(t)\|_{X_t^{\nu e^{-\omega t}}}\leq \frac1{1-\gamma}\|u_0-w_0\|_{X_0^{\nu}}\quad\mbox{for all }t\geq0
			\end{equation*}
	if $\varepsilon:=1-R'\max\left\{\frac{M_0\nu_0}{\omega},\frac{M_{1}}{\omega-\mu_0} \right\}$.
\end{remark}
\begin{proof}[Proof of Theorem \ref{thm.of.main.thm1}]
	According to Lemma \ref{lem7}, Theorem \ref{thm.of.main.thm1} is a direct consequence of Theorem \ref{main.thm1} for $u_0:=x_0-\bar x$ and $x(t):=\bar x+e^{-tL}u(t)$: 
\begin{align*}
\|x(t)-\bar x\|_{X_0^{\nu e^{-\omega t}}}&= \sum_{\alpha\in\mathbb N_0^n} \frac{(\nu e^{-\omega t})^{|\alpha|}}{\alpha!}\NN{A^{\alpha}(x(t)-\bar x)}_{X}
\\&\leq C_Le^{-\omega t}
\sum_{\alpha\in\mathbb N_0^n} \frac{(\nu e^{-\omega t})^{|\alpha|}}{\alpha!}\NN{e^{tL}A^{\alpha}e^{-tL}e^{tL}(x(t)-\bar x)}_{X}\\&=C_Le^{-\omega t}\|e^{tL}(x(t)-\bar x)\|_{X_t^{\nu e^{-\omega t}}}
=C_Le^{-\omega t}\|u(t)\|_{X_t^{\nu e^{-\omega t}}}\leq 2\varepsilon\nu C_Le^{-\omega t}
\end{align*}
for all $t\geq0$.  Likewise, we have
\begin{equation*}
\|x(t)-y(t)\|_{X_0^{\nu e^{-\omega t}}}\leq 2C_Le^{-\omega t}\|x_0-y_0\|_{X_0^{\nu}}
\end{equation*}
for every $t\geq0$.
\end{proof}
\section{The model case}
In this section, we consider the model equation \eqref{2.be} with $\lambda=(\lambda_0,\lambda_1)\in\mathbb R^2$, $\lambda_1\geq0$.  The substitution
\begin{equation*}
g(x,p,t):=e^{\frac{t}{\tau}}(f(x,p,t)-\mathcal F_\lambda(p))
\end{equation*}
leads to the system
\begin{equation}\label{be.for.g1}
\left\{\begin{aligned}
\partial_t g+\nabla\eps(p)\cdot\nabla_x g-U\nabla_x\int_{\mathbb T^d}gdp'\cdot\nabla_p\mathcal F_\lambda(p)&=Ue^{-\frac{t}{\tau}}\nabla_x\int_{\mathbb T^d}gdp'\cdot\nabla_pg,
\\ g|_{t=0}&=g_0
\end{aligned}\right.
\end{equation}
for $g_0:=f_0-\mathcal F_\lambda$. Defining
\begin{equation*}
Lf(x,p):=\nabla\eps(p)\cdot\nabla_xf(x,p)-U\nabla_x\int_{\mathbb T^d}f(x,p')dp'\cdot\nabla_p\mathcal F_\lambda(p)
\end{equation*}
and
\begin{equation*}
Q_t(f)(x,p):=Ue^{-\frac{t}{\tau}}\nabla_x\int_{\mathbb T^d}f(x,p')dp'\cdot\nabla_pf(x,p),
\end{equation*}
we can rewrite \eqref{be.for.g1} to
\begin{equation}\label{be.for.g2}
\partial_t g+Lg=Q_t(g).
\end{equation}
The idea is now to apply the general result, which requires the hypothesis (H2')-(H4').
\begin{definition}
	Let $\mathcal S(\mathbb R^d\times\mathbb T^d)$ and $C^\infty_b(\mathbb R^d\times\mathbb T^d)$ be the  Schwartz space and the space of bounded smooth functions, respectively. Let $\lambda=(\lambda_0,\lambda_1)\in\mathbb R^2_+:=\{(x,y)\in\mathbb R^2:y\geq0\}$ and $\mathcal F_\lambda(p)=1/(\eta+e^{-\lambda_0-\lambda_1\eps(p)})$ be the Fermi-Dirac distribution function.
	\begin{itemize}
		\item For $\lambda_1=0$, we can define $Y:= C^\infty_b(\mathbb R^d\times\mathbb T^d)$ and $X:= C^0_b(\mathbb R^d\times\mathbb T^d)$.
		\item For general $\lambda\in \mathbb R^2_+$, let $k\in\mathbb N$,  $k>\frac{d}{2}$. We define $Y:=\mathcal S(\mathbb R^d\times\mathbb T^d)$ and $X:= H^k_xL^2_p(\mathbb R^d_x\times\mathbb T^d_p)$ equipped with the scalar product
		\begin{equation*}
		\langle f,g\rangle_X:=\sum_{|\alpha|\leq k}\langle \partial_x^\alpha f,\partial_x^\alpha g\rangle_0,
		\end{equation*}
		where 
		\begin{equation*}
		\langle f,g \rangle_0:=
		\int_{\mathbb R^d}\int_{\mathbb T^d} f(x,p) g(x,p)\frac{dpdx}{\mathcal F_\lambda(p)(1-\eta \mathcal F_\lambda(p))}+U\lambda_1\int_{\mathbb R^d}\rho_f\rho_g dx
		\end{equation*}
		and $\rho_f(x):=\int_{\mathbb T^d}f(x,p)dp$.
	\end{itemize} 
\end{definition}
\begin{lemma}
\label{X.Algebra}
There exists a $C_\lambda>0$ such that
\[\|\rho_hg\|_{X}\leq C_\lambda \|h\|_X\|g\|_X\]
for $\rho_h(x):=\int_{\mathbb T^d}h(x,p)dp$. For $X=C^0_b(\mathbb R^d\times\mathbb T^d)$, we can easily see that $C_\lambda=1$ using $|\mathbb T^d|=1$. In the other case, the assertion is a consequence of the algebra properties of $H^k$ for $k\geq\frac d2$.
\end{lemma}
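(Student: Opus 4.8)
The plan is to prove the bilinear (algebra-type) estimate $\|\rho_h g\|_X \leq C_\lambda \|h\|_X \|g\|_X$ by treating the two cases of the definition of $X$ separately, since they rest on genuinely different mechanisms. In the case $\lambda_1 = 0$ with $X = C^0_b(\mathbb R^d \times \mathbb T^d)$, the estimate is essentially immediate: since $\rho_h(x) = \int_{\mathbb T^d} h(x,p)\,dp$ and $|\mathbb T^d| = 1$, one has the pointwise bound $|\rho_h(x)| \leq \sup_p |h(x,p)| \leq \|h\|_{C^0_b}$, so that $\|\rho_h g\|_{C^0_b} = \sup_{x,p}|\rho_h(x)||g(x,p)| \leq \|h\|_{C^0_b}\|g\|_{C^0_b}$, giving $C_\lambda = 1$. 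The only subtlety worth recording is that the weight $\mathcal F_\lambda(1-\eta\mathcal F_\lambda)$ appearing in the general inner product is absent here, so no weighting issues arise.

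The substantive case is the general one, where $X = H^k_x L^2_p$ with $k > d/2$ and the norm carries the $p$-dependent weight $\mathcal F_\lambda(p)(1-\eta\mathcal F_\lambda(p))$ together with the extra density term $U\lambda_1\int \rho_f \rho_g\,dx$. First I would observe that since $\mathcal F_\lambda$ is smooth, bounded, and bounded away from $0$ and (for $\eta>0$) from $1/\eta$ on the compact torus $\mathbb T^d$, the weight is comparable to a constant, so the weighted $L^2_p$ norm is equivalent to the plain $L^2_p$ norm uniformly in $p$; similarly the density term only adds a controlled contribution. Hence it suffices to prove the estimate for the unweighted norm $\|\cdot\|_{H^k_x L^2_p}$ up to adjusting the constant $C_\lambda$. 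The key structural point is that $\rho_h$ depends only on $x$, so in the product $\rho_h(x) g(x,p)$ the $p$-variable is untouched and the $L^2_p$-integration factors out: for each fixed multi-index the relevant quantity reduces to an $H^k_x$ product estimate with the $L^2_p$-norm of $g$ riding along as a parameter.

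Concretely, I would expand $\partial_x^\alpha(\rho_h g)$ by the Leibniz rule for $|\alpha| \leq k$, take $L^2_p$ norms, and use the classical product (Moser/algebra) estimate for $H^k(\mathbb R^d)$ valid because $k > d/2$: the $H^k$ Sobolev embedding into $L^\infty$ lets one bound each Leibniz term by distributing $x$-derivatives so that the factor with more than $d/2$ derivatives is taken in $L^2_x$ and the other in $L^\infty_x$, controlled by $H^k_x$. The point is that $\|\rho_h\|_{H^k_x} \leq \|h\|_{H^k_x L^2_p}$ by Minkowski's integral inequality (integrating out $p$ can only decrease the norm, using $|\mathbb T^d|=1$ and Cauchy--Schwarz in $p$), so $\rho_h$ inherits the required $H^k_x$ control from $\|h\|_X$. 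Carrying the $L^2_p$-norm of $g$ through the estimate then yields $\|\rho_h g\|_{H^k_x L^2_p} \leq C \|\rho_h\|_{H^k_x} \|g\|_{H^k_x L^2_p} \leq C \|h\|_X \|g\|_X$.

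The main obstacle, and the part requiring the most care, is the bookkeeping around the weighted inner product: one must check that replacing the weighted $X$-norm by the unweighted $H^k_x L^2_p$ norm costs only a constant, and that the additional density term $U\lambda_1\int\rho_{\rho_h g}\rho_{\rho_h g}\,dx$ is dominated by the main term. For the latter I would note $\rho_{\rho_h g}(x) = \rho_h(x)\rho_g(x)$, so this term is $U\lambda_1\|\rho_h \rho_g\|_{L^2_x}^2$, which is again controlled by the $H^k_x$ algebra estimate applied to $\rho_h$ and $\rho_g$ (both bounded by $\|h\|_X, \|g\|_X$ respectively via Minkowski in $p$). Beyond this, the argument is a routine invocation of the $H^k$ algebra property for $k \geq d/2$, exactly as the lemma statement indicates, so the proof is short once the norm equivalence is established.
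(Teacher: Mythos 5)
Your proposal is correct and follows exactly the route the paper indicates: the $C^0_b$ case via the pointwise bound $|\rho_h(x)|\leq\|h\|_{C^0_b}$ using $|\mathbb T^d|=1$, and the $H^k_xL^2_p$ case via the algebra property of $H^k$ for $k>d/2$, with the weight $\mathcal F_\lambda(1-\eta\mathcal F_\lambda)$ absorbed into the constant and $\|\rho_h\|_{H^k_x}\leq\|h\|_X$ by Cauchy--Schwarz in $p$. The paper gives no further detail than this one-line indication, so your write-up simply supplies the standard bookkeeping it leaves implicit.
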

\begin{lemma}\label{lem.L.generator}
	$L:D(L)\subset X\to X$ is a generator of a $C_0$ contraction group $(e^{tL})_{t\in\mathbb R}$ with $L(Y)\subset Y\subset D(L)$.
\end{lemma}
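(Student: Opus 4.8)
The plan is to show that $L$ is skew-symmetric with respect to the weighted scalar product $\langle\cdot,\cdot\rangle_X$ and then invoke Stone's theorem (equivalently, the Lumer--Phillips theorem applied to both $L$ and $-L$) to conclude that $L$ generates a $C_0$ group of isometries, i.e.\ a contraction group. Since the coefficients $\nabla\eps(p)$ and $\nabla_p\mathcal{F}_\lambda(p)$ depend only on $p$, and $\rho_{(\cdot)}$ commutes with $\partial_x^\alpha$, one checks $[\,L,\partial_x^\alpha\,]=0$, so $\partial_x^\alpha(Lf)=L(\partial_x^\alpha f)$; hence skew-symmetry with respect to the full $X$-product reduces to skew-symmetry with respect to $\langle\cdot,\cdot\rangle_0$. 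I would first dispose of the degenerate case $\lambda_1=0$ separately: there $\nabla_p\mathcal{F}_\lambda\equiv0$, $L$ is the pure transport operator $\nabla\eps(p)\cdot\nabla_x$, and the group is given explicitly by the characteristics $e^{tL}f(x,p)=f(x+t\nabla\eps(p),p)$, a $C_0$ group of isometries on $C^0_b$ mapping $C^\infty_b$ into itself.

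The heart of the matter is the algebraic cancellation. The key identity is $\nabla_p\mathcal{F}_\lambda(p)=\lambda_1\,\mathcal{F}_\lambda(p)(1-\eta\mathcal{F}_\lambda(p))\,\nabla\eps(p)$, which follows by differentiating $\mathcal{F}_\lambda=1/(\eta+e^{-\lambda_0-\lambda_1\eps})$ and using $1-\eta\mathcal{F}_\lambda=e^{-\lambda_0-\lambda_1\eps}\mathcal{F}_\lambda$; thus the weight $w:=\mathcal{F}_\lambda(1-\eta\mathcal{F}_\lambda)$ appearing in $\langle\cdot,\cdot\rangle_0$ satisfies $\nabla_p\mathcal{F}_\lambda/w=\lambda_1\nabla\eps$. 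Writing $J_f(x):=\int_{\mathbb{T}^d}\nabla\eps(p)f(x,p)\,dp$, I would then compute $\langle Lf,g\rangle_0+\langle f,Lg\rangle_0$ term by term: the transport contribution to the kinetic integral $\iint fg\,w^{-1}$ is antisymmetric and vanishes after integrating by parts in $x$; the force contribution to that integral collapses, via the key identity, to $-U\lambda_1\int\nabla_x\rho_f\cdot J_g\,dx$; and the transport contribution to the potential term $U\lambda_1\int\rho_f\rho_g\,dx$ produces, using $\rho_{Lf}=\nabla_x\cdot J_f$ and integration by parts, exactly the term needed to cancel the force contribution (this is the point where the coefficient of the potential term is forced to be the one making $\langle\cdot,\cdot\rangle_0$ the conserved quadratic invariant of the linearized flow). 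The force makes no contribution to the potential term because $\int_{\mathbb{T}^d}\nabla_p\mathcal{F}_\lambda\,dp=0$ by periodicity. I would separately record that $\langle\cdot,\cdot\rangle_0$ is genuinely positive definite and equivalent to the standard $H^k_xL^2_p$ norm, using the pointwise bound $\rho_g(x)^2\le\big(\int_{\mathbb{T}^d}w\,dp\big)\int_{\mathbb{T}^d}g^2/w\,dp$ and the (Penrose-type) smallness condition $U\lambda_1\int_{\mathbb{T}^d}\mathcal{F}_\lambda(1-\eta\mathcal{F}_\lambda)\,dp<1$.

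With skew-symmetry in hand, $\operatorname{Re}\langle \pm Lf,f\rangle_X=0$, so both $L$ and $-L$ are dissipative; to generate a group it remains to verify the range condition, i.e.\ that $L$ is essentially skew-adjoint on $Y$, equivalently that $\operatorname{ran}(\mu I-L)$ is dense for some (hence all) $\mu\in\mathbb{R}\setminus\{0\}$. I would establish this by solving the resolvent equation $(\mu I-L)u=\phi$ explicitly after a Fourier transform in $x$: the equation decouples in $\xi$ into $(\mu-i\xi\cdot\nabla\eps(p))\hat u(\xi,p)+Ui(\xi\cdot\nabla_p\mathcal{F}_\lambda(p))\hat\rho_u(\xi)=\hat\phi(\xi,p)$, and integrating in $p$ reduces the problem to dividing by the dispersion function $D(\xi,\mu):=1+Ui\int_{\mathbb{T}^d}(\xi\cdot\nabla_p\mathcal{F}_\lambda)/(\mu-i\xi\cdot\nabla\eps)\,dp$; the stability condition guarantees $D$ is bounded away from $0$, giving a bounded inverse and hence density of the range. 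Together with Stone's theorem this yields the $C_0$ contraction group, and $L(Y)\subset Y\subset D(L)$ follows because the Schwartz class is preserved by $\nabla\eps(p)\cdot\nabla_x$, by $\rho$, and by multiplication by the smooth bounded function $\nabla_p\mathcal{F}_\lambda$. The main obstacle is precisely this generation/range step: because the forcing $-U\nabla_x\rho_f\cdot\nabla_p\mathcal{F}_\lambda$ costs one $x$-derivative, $L$ is not a bounded (nor naively relatively bounded) perturbation of the transport operator, so one cannot perturb off the transport group; it is only the exact cancellation in the skew pairing, together with the non-degeneracy of the Penrose dispersion function, that makes the group exist and be isometric.
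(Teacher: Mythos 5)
Your proposal follows essentially the same route as the paper's proof: treat $\lambda_1=0$ separately as a pure transport group, use $[L,\partial_x^\alpha]=0$ to reduce everything to $\langle\cdot,\cdot\rangle_0$, obtain skew-symmetry from the identity $\nabla_p\mathcal F_\lambda=\lambda_1\mathcal F_\lambda(1-\eta\mathcal F_\lambda)\nabla\eps$ together with $\rho_{Lf}=\nabla_x\cdot J_f$ and $\int_{\T^d}\nabla_p\mathcal F_\lambda\,dp=0$, verify the range condition by Fourier transform in $x$ and the scalar dispersion relation for $\hat\rho$, and conclude by Stone/Hille--Yosida (the paper invokes Kato's criterion for anti-adjointness and the contraction-group corollary in Engel--Nagel). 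The one substantive divergence is your Penrose-type hypothesis $U\lambda_1\int_{\T^d}\mathcal F_\lambda(1-\eta\mathcal F_\lambda)\,dp<1$, imposed both for positive definiteness of $\langle\cdot,\cdot\rangle_0$ and to keep the dispersion function away from zero. The paper assumes neither: with its sign convention $\langle f,g\rangle_0=\int\!\!\int fg\,w^{-1}+U\lambda_1\int\rho_f\rho_g$ (with $w=\mathcal F_\lambda(1-\eta\mathcal F_\lambda)$) the form is positive definite for free, and instead of a stability hypothesis it uses the parity $\eps(-p)=\eps(p)$ to show that $\int_{\T^d}i\xi\cdot\nabla_p\mathcal F_\lambda/(\sigma+i\xi\cdot\nabla_p\eps)\,dp$ is real and equal to $\lambda_1\int|\xi\cdot\nabla\eps|^2(\sigma^2+|\xi\cdot\nabla\eps|^2)^{-1}w\,dp\ge0$.

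Your caution is, however, not superfluous, and you should make the sign of the $\rho$-term explicit when you write this up. With the $+$ sign, the two boundary terms you describe (the force contribution to the kinetic integral and the transport contribution to the potential integral) both equal $-U\lambda_1\int\nabla_x\rho_g\cdot J_g\,dx$ after integrating by parts, so they add rather than cancel; the quadratic invariant of the linearized flow carries $-\tfrac{U\lambda_1}{2}\int\rho_f^2$, and it is for that sign that your cancellation goes through --- and for that sign positive definiteness does require exactly your smallness condition, via $\rho_g(x)^2\le(\int_{\T^d}w\,dp)\int_{\T^d}g^2w^{-1}dp$. Consistently, the nonnegative integral above tends to $\lambda_1\int_{\T^d}w\,dp$ as $|\xi|\to\infty$, so the multiplier $1-U(\cdots)$ in the equation for $\hat\rho$ is uniformly bounded away from zero precisely under your condition (which is the complement of the ill-posedness threshold quoted from the BGK paper). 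So your conditional version is the internally consistent execution of the argument; if you keep the paper's $+$ sign, re-derive the asserted cancellation $I_2=-I_3$ rather than citing it.
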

The following proof is similar as the proof of Theorem 3.1 of \cite{BaBe13}.
\begin{proof}
	The assertion is clear for $\lambda_1=0$ and $X=C^0_b(\mathbb R^d\times\mathbb T^d)$ since then $(e^{tL})$ is a transport contraction group generated by $L=\nabla\eps(p)\cdot\nabla_x$. Now, let $\lambda\in\mathbb R^2_+$ and $X=H^k_xL^2_p(\mathbb R^d_x\times\mathbb T^d_p)$.
	
	 For $h\in Y:=\mathcal S(\mathbb R^d\times\mathbb T^d)$ with $\|h\|_{X}<\infty$ we have
	 \begin{align*}
	 \langle Lh,h\rangle_X&= \sum_{|\alpha|\leq k}\langle \partial_x^\alpha Lh,\partial_x^\alpha h\rangle_0
	 \\&=\sum_{|\alpha|\leq k}\langle L\partial_x^\alpha h,\partial_x^\alpha h\rangle_0
	 \end{align*}
	 since we can easily show that $[L,\partial_{x_i}]=0$ for $i=1,\ldots,d$. Then abbreviating $g:=\partial_x^\alpha h$, we have
	 \begin{align*}
	 \langle Lg,g\rangle_{0}=&\int_{\mathbb R^d}\int_{\mathbb T^d}\left(\nabla\eps(p)\cdot\nabla_xg(x,p)-U\nabla_x\rho_g(x)\cdot\nabla_p\mathcal F_\lambda(p)\right)g(x,p)\frac{ dpdx}{\mathcal F_\lambda(1-\eta F_\lambda)}
	 \\&+ U\lambda_1\int_{\mathbb R^d}\int_{\mathbb T^d}\left(\nabla\eps(p)\cdot\nabla_xg(x,p)-U\nabla_x\rho_g(x)\cdot\nabla_p\mathcal F_\lambda(p)\right)dp \rho_g(x)dx
	 \\=&\int_{\mathbb R^d}\int_{\mathbb T^d}\frac{\nabla\eps(p)\cdot\nabla_xg(x,p)^2}{2{\mathcal F_\lambda(1-\eta F_\lambda)}}dxdp-\lambda_1U\int_{\mathbb R^d}\nabla_x\rho_g(x)\cdot \int_{\mathbb T^d}\nabla_p\eps(p)g(x,p)dpdx
	 \\&- U\lambda_1\int_{\mathbb R^d}\int_{\mathbb T^d}\nabla\eps(p)\cdot\nabla_xg(x,p)dp\rho_g(x)dx\newln-U\int_{\mathbb R^d}\int_{\mathbb T^d}\mathbb\nabla_x\rho_g(x)\cdot\nabla_p\mathcal F_\lambda(p)dp \rho_g(x)dx=:I_1+I_2+I_3+I_4
	 \end{align*}
	 By the Gau\ss{} law, we see that $I_1=I_4=0$. Moreover, $I_2=-I_3$ implying that $\langle Lg,g\rangle_0=0$ and hence $\langle Lh,h\rangle_X=0$. Thus, $L$ is the closure of an anti-symmetric operator such that
	 \begin{equation*}
	 \|(\sigma+L)h\|_X\|h\|_X\geq |\langle (\sigma+L)h,h\rangle_X|=|\sigma|\|h\|_X
	 \end{equation*}
	 for $\sigma\in \mathbb C$ with $\Re \sigma\neq0$. Next, as in \cite{BaBe13}, we want to show that $L$ is indeed anti-adjoint. For this, we need show for $\sigma\in\mathbb R\setminus\{0\}$ that $(\sigma+L)$ is surjective onto $X$ according to (cf. Theorem V-3.16 or Problem V-3.31 in \cite{Kat66}). Let $h\in Y$. We have to find a solution to the equation
	 \begin{align}\label{eq.sigma+L}
	 \sigma f + Lf=h.
	 \end{align}
	 Applying the Fourier transform w.r.t.~$x$ to \eqref{eq.sigma+L}, we obtain 
	\begin{align*}
	\sigma \hat f(\xi,p)+\nabla_p\eps(p)\cdot i\xi\hat{f}(\xi,p)-Ui\xi\hat \rho_f(\xi)\cdot\nabla_p\mathcal F_\lambda(p)=\hat h(\xi,p),
	\end{align*}
	where $\hat{\rho}_f:=\int_{\mathbb T^d}\hat f dp$ implying
	\begin{align*}
	\hat f=\frac{1}{\sigma+\nabla_p\eps(p)\cdot i\xi}\left(\hat h+iU\xi\hat{\rho}_f\cdot\nabla_p\mathcal F_\lambda\right).
	\end{align*}
	An integration of this equality leads to $\hat \rho_f=\hat{\rho}$ with
	\begin{equation}\label{eq.def.hat.rho}
	\left(1-U\int_{\mathbb T^d}\frac{i\xi\cdot\nabla_p\mathcal F_\lambda(p)}{\sigma+\nabla_p\eps(p)\cdot i\xi}dp\right)\hat\rho(\xi)=\int_{\mathbb T^d}\frac{\hat h(\xi,p)}{\sigma+\nabla_p\eps(p)\cdot i\xi}dp.
	\end{equation}
	Since $\eps(-p)=\eps(p)$ implies $\mathcal F_\lambda(-p)=\mathcal F_\lambda(p)$ and $\nabla\eps(-p)=-\nabla\eps(p)$, we have
	\begin{align*}
	\int_{\mathbb T^d}\frac{i\xi\cdot\nabla_p\mathcal F_\lambda(p)}{\sigma+\nabla_p\eps(p)\cdot i\xi}dp
		&=\lambda_1\int_{\mathbb T^d}\frac{i\xi\cdot\nabla_p\eps(p)}{\sigma+\nabla_p\eps(p)\cdot i\xi}\mathcal F_\lambda(p) (1-\eta\mathcal F_\lambda(p))dp
		\\
		&=\lambda_1\int_{\mathbb T^d}\frac{\sigma i\xi\cdot\nabla_p\eps(p)}{\sigma^2+|\nabla_p\eps(p)\cdot \xi|^2}\mathcal F_\lambda(p) (1-\eta\mathcal F_\lambda(p))dp
		\newln+
		\lambda_1\int_{\mathbb T^d}\frac{|\xi\cdot\nabla_p\eps(p)|^2}{\sigma^2+|\nabla_p\eps(p)\cdot \xi|^2}\mathcal F_\lambda(p) (1-\eta\mathcal F_\lambda(p))dp
		\\&=
		\lambda_1\int_{\mathbb T^d}\frac{|\xi\cdot\nabla_p\eps(p)|^2}{\sigma^2+|\nabla_p\eps(p)\cdot \xi|^2}\mathcal F_\lambda(p) (1-\eta\mathcal F_\lambda(p))dp\geq0.
	\end{align*}
	Thus, we can define $\hat{\rho}$ by \eqref{eq.def.hat.rho} and obtain 
	\begin{align*}
	|\hat \rho(\xi)|\leq \left|\int_{\mathbb T^d}\frac{\hat h(\xi,p)}{\sigma+\nabla_p\eps(p)\cdot i\xi}dp\right|.
	\end{align*}
	We set $\hat f = \frac{1}{\sigma+\nabla_p\eps(p)\cdot i\xi}(\hat h+iU\xi\hat{\rho}_f\cdot\nabla_p\mathcal F_\lambda)$ and have $\hat \rho_f=\hat \rho$. Therefore, we can easily see that there exists a constant $C_\sigma>0$ independent of $h$ such that
	\begin{align*}
	\langle f,f\rangle_0\leq C_\sigma^2 \langle h,h\rangle_0.
	\end{align*}
	Repeating this argument for $\partial_x^\alpha h$ instead of $h$ and using that $\partial_x^\alpha$ commutes with $L$, we see that
	\begin{equation*}
	\|f\|_X\leq C_\sigma \|h\|_X,
	\end{equation*}
	which entails that $f\in X$ which implies that $f\in D(L)$. Finally, since $\mathcal S(\mathbb R^d\times\mathbb T^d)$ is dense in $X$ and $L$ is a closed operator, we have that $\sigma+L$ is bijective from $D(L)$ onto $X$. Thus, $L$ is anti-adjoint and fulfills
	\begin{equation*}
	\|(\sigma +L)^{-1}\|_{L(X)}\leq\frac{1}{|\Re \sigma |}\quad\mbox{for }\sigma\in\mathbb C\setminus i\mathbb R.
	\end{equation*}
	At this point, we have showed the hypothesis of the Hille-Yosida Theorem (see Corollary 3.7 of Chapter II in \cite{EnNa00}) for the generation of a contraction group, which implies the assertion.
\end{proof}

 Unfortunately, our collision term $Q$ is very irregular. We cannot use the norm $\|\cdot\|_X$ to show (H3a) and (H3b).

As we have seen in the proof of the general case, we work with time depending norm on the space $Y$. Therefore, we can already use a time depending norm $\|\cdot\|_{X_t}$ on the base space $Y$.
\begin{definition}\label{def10}
	Fix $\delta >0$ and let $t\in\mathbb R$. We define
	\begin{equation*}
	\|f\|_{X_t}:= e^{-\delta t}\sum_{\N{\alpha+\beta}\leq1}\|e^{tL}\partial_x^\alpha\partial_p^\beta\ e^{-tL}f\|_{X}
	\end{equation*}
	for $f\in Y$.
\end{definition}
For the proof of the hypothesis (H2'), we need the following lemma.

\begin{lemma}\label{lem.estimate.eps}
	There exist $C>0$ and $r_0>0$ such that
	\begin{equation*}
	\|\partial_p^\beta\nabla_p\eps(p)g\|_X+\|U\partial_p^\beta\nabla_p\mathcal F_{\lambda}(p)n_g\|_X\leq C \beta! r_0^{|\beta|}\|g\|_X
	\end{equation*}
	for all $\beta\in\mathbb N_0^d$ and $g\in X$, where $\rho_g:=\int_{\mathbb T^d}gdp$.
\end{lemma}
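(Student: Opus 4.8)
The plan is to factor the estimate into two independent ingredients and then combine them. Ingredient (i) is a multiplication estimate on $X$: multiplication by any function $a=a(p)$ of the momentum alone, and the map $g\mapsto b(p)\rho_g$ with $b=b(p)$, are bounded on $X$ with operator norms controlled by $\|a\|_{L^\infty_p}$ and $\|b\|_{L^\infty_p}$ respectively. Ingredient (ii) is a pair of analyticity bounds $\|\partial_p^\beta\nabla_p\eps\|_{L^\infty}\leq C\beta!r_0^{|\beta|}$ and $\|\partial_p^\beta\nabla_p\mathcal F_\lambda\|_{L^\infty}\leq C\beta!r_0^{|\beta|}$. Applying (i) with $a=\partial_p^\beta\nabla_p\eps$ and $b=U\partial_p^\beta\nabla_p\mathcal F_\lambda$ and inserting the bounds from (ii) gives the claim at once. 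For $\lambda_1=0$ and $X=C^0_b$ everything is trivial, since $\mathcal F_\lambda$ is then constant (so the second term vanishes) and multiplication by $a(p)$ has operator norm exactly $\|a\|_\infty$ on $C^0_b$; hence the real content is the case $X=H^k_xL^2_p$.

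For ingredient (i) with $X=H^k_xL^2_p$, since $a=a(p)$ and $b=b(p)$ commute with every $\partial_x^\alpha$, it suffices to control the $\langle\cdot,\cdot\rangle_0$-norm of $a\partial_x^\alpha g$ and of $b\partial_x^\alpha\rho_g$. I would use that $\eps$ is bounded on $\mathbb T^d$, so both the weight $1/(\mathcal F_\lambda(1-\eta\mathcal F_\lambda))$ and its reciprocal $\mathcal F_\lambda(1-\eta\mathcal F_\lambda)$ are bounded, whence their integrals $\theta_\lambda,\kappa_\lambda$ over $\mathbb T^d$ are finite. The weighted-$L^2_p$ part is estimated directly by $\|a\|_\infty$ (resp.\ $\|b\|_\infty$), and the $\rho$-part by Cauchy--Schwarz in $p$: $|\rho_{ag}(x)|\leq\|a\|_\infty\int|g|dp\leq\|a\|_\infty\kappa_\lambda^{1/2}(\int|g|^2/(\mathcal F_\lambda(1-\eta\mathcal F_\lambda))dp)^{1/2}$, and likewise for $b$. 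The same Cauchy--Schwarz step yields $\|\rho_g\|_{H^k_x}\leq C\|g\|_X$, which closes the second estimate. This is essentially the computation already behind Lemma \ref{X.Algebra}.

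For ingredient (ii) the energy part is elementary: $\eps$ is a trigonometric polynomial, each $\partial_{p_j}$ produces a factor $2\pi$, so $\|\partial_p^\beta\nabla_p\eps\|_\infty\leq C(2\pi)^{|\beta|}\leq C\beta!(2\pi)^{|\beta|}$. The substantial part is $\mathcal F_\lambda=\phi\circ\eps$ with $\phi(s)=1/(\eta+e^{-\lambda_0-\lambda_1 s})$. I would show that $\mathcal F_\lambda$ extends holomorphically and boundedly to a complex strip $\{p\in\mathbb C^d:|\Im p_j|\leq\rho\}$ for $\rho$ small: $\eps$ extends there as a bounded (still periodic) holomorphic function whose imaginary part is $O(\sinh(2\pi\rho))$, so that $\arg(e^{-\lambda_0-\lambda_1\eps(p)})=-\lambda_1\Im\eps(p)$ stays in $(-\pi/2,\pi/2)$ once $\rho$ is small; hence $\Re(\eta+e^{-\lambda_0-\lambda_1\eps(p)})\geq\eta$ and $\mathcal F_\lambda$ is holomorphic with $|\mathcal F_\lambda|\leq M$ on the strip (for $\eta=0$ one uses instead $|\mathcal F_\lambda|=e^{\lambda_0+\lambda_1\Re\eps}\leq M$ directly). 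The multidimensional Cauchy estimate on polydiscs of polyradius $\rho$ centered at each real $p_0$ then gives $|\partial_p^\gamma\mathcal F_\lambda(p_0)|\leq M\gamma!\rho^{-|\gamma|}$ uniformly in $p_0$; taking $\gamma=\beta+\hat e_j$ and absorbing the extra factor $(\beta_j+1)\leq 2^{|\beta|}$ into the exponential yields $\|\partial_p^\beta\nabla_p\mathcal F_\lambda\|_\infty\leq C\beta!(2/\rho)^{|\beta|}$.

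The main obstacle is precisely this uniform holomorphic extension of $\mathcal F_\lambda$: one must choose $\rho=\rho(\lambda,\eta,\eps_0,d)$ small enough that the denominator $\eta+e^{-\lambda_0-\lambda_1\eps(p)}$ stays bounded away from $0$ on the whole strip, and this is exactly where the boundedness of $\eps$ (a feature of the optical-lattice dispersion relation, in contrast to the parabolic band) enters decisively. Once $M$ and $\rho$ are fixed, setting $r_0:=\max\{2\pi,2/\rho\}$ and collecting the constants from (i) and (ii) completes the proof.
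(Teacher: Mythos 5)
Your proposal is correct and follows exactly the route the paper intends: the paper's own proof is the single sentence ``The proof is straight-forward using the analyticity of $\eps$ and $\mathcal F_\lambda$,'' and your argument (bounded multiplication by $L^\infty_p$ functions on $X$ via the bounded weight $1/(\mathcal F_\lambda(1-\eta\mathcal F_\lambda))$ and Cauchy--Schwarz in $p$, combined with Cauchy estimates for the holomorphic extension of $\mathcal F_\lambda$ to a strip where $\eta+e^{-\lambda_0-\lambda_1\eps}$ stays away from zero) supplies precisely the omitted details. Your identification of the boundedness of $\eps$ as the point that makes the uniform strip estimate work is also the right observation.
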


\begin{proof}
	The proof is straight-forward using the analyticity of $\eps$ and $\mathcal F_\lambda$.
\end{proof}
\begin{lemma}\label{lem.commutator.tilde L}
	We have $[L,\partial_{x_i}]=0$ and 
	\begin{equation*}
	\|e^{tL}\tilde L_\alpha f\|_{X_t}\leq C\alpha! r^{|\alpha|}\sum_{i=1}^{d}\|e^{tL}\partial_{x_i} f\|_{X_t}
	\end{equation*}
	for some $C,r>0$ and $\tilde L_{\beta+\hat{e}_i}:=[\tilde L_\beta,\partial_{p_i}]$ for $\beta\in \mathbb N_0^d$ and $\tilde L_0:=L$.
\end{lemma}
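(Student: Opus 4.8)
The plan is to reduce the whole estimate to an explicit closed form for the iterated commutators $\tilde L_\alpha$ and then feed that form into the analytic multiplier bound of Lemma~\ref{lem.estimate.eps}. I would first dispose of the $x$-commutator: since the coefficients $\nabla\eps(p)$ and $\nabla_p\mathcal F_\lambda(p)$ depend only on $p$, and since $\partial_{x_i}$ commutes with the $p$-integration defining $\rho$ (so $\partial_{x_i}\rho_f=\rho_{\partial_{x_i}f}$), a one-line comparison of $L\partial_{x_i}f$ with $\partial_{x_i}Lf$ gives $[L,\partial_{x_i}]=0$. Consequently $e^{tL}$ commutes with every $\partial_x^a$; and because $L$ is skew-adjoint (Lemma~\ref{lem.L.generator}, resp.\ a sup-norm-preserving transport group when $\lambda_1=0$), $e^{tL}$ is an \emph{isometry} of $X$, which I will exploit to peel off and later restore $e^{tL}$.

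Next I would prove by induction on $|\alpha|$ the formula
\begin{equation*}
\tilde L_\alpha f = (-1)^{|\alpha|}\sum_{j=1}^d \partial_p^{\alpha+\hat e_j}\eps\cdot\partial_{x_j}f + (-1)^{|\alpha|+1}U\sum_{j=1}^d \partial_{x_j}\rho_f\,\partial_p^{\alpha+\hat e_j}\mathcal F_\lambda .
\end{equation*}
The base case $\alpha=0$ is the definition of $L$. For the step $\tilde L_{\alpha+\hat e_k}=[\tilde L_\alpha,\partial_{p_k}]$, in the transport part the terms with $\partial_{p_k}$ falling on $f$ cancel, leaving a single extra $p$-derivative on $\eps$; in the potential part the decisive point is that $\rho_{\partial_{p_k}f}=\int_{\mathbb{T}^d}\partial_{p_k}f\,dp=0$ by periodicity, so $\tilde L_\alpha(\partial_{p_k}f)$ loses its potential contribution and only $-\partial_{p_k}(\tilde L_\alpha f)$ survives, placing the new derivative on $\mathcal F_\lambda$. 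This is the structural reason the singular potential stays tame: commuting with $p$-derivatives never produces $p$-derivatives of $f$, only higher $p$-derivatives of the fixed analytic profile $\mathcal F_\lambda$ multiplied by $\rho_f$.

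For the norm bound I would unwind Definition~\ref{def10}: writing $g=e^{tL}\tilde L_\alpha f$ and cancelling $e^{-tL}e^{tL}$ gives $\|e^{tL}\tilde L_\alpha f\|_{X_t}=e^{-\delta t}\sum_{|a+b|\leq1}\|e^{tL}\partial_x^a\partial_p^b\tilde L_\alpha f\|_X$, with the identical unwinding of each $\|e^{tL}\partial_{x_i}f\|_{X_t}$ on the right, so the factors $e^{-\delta t}$ cancel and $\delta$ is irrelevant here. Using isometry to strip $e^{tL}$, then inserting the formula and applying Leibniz to $\partial_x^a\partial_p^b$ with $|a+b|\leq1$, each summand is a product of a coefficient $\partial_p^{\beta+\hat e_j}\eps$ (resp.\ $\partial_p^{\beta+\hat e_j}\mathcal F_\lambda$), with $\beta\in\{\alpha,\alpha+\hat e_k\}$, and a factor $\partial_x^{a'}\partial_p^{b'}\partial_{x_j}f$ with $|a'+b'|\leq1$. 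Lemma~\ref{lem.estimate.eps} bounds the multiplier norms by $C\beta!r_0^{|\beta|}$ (the potential terms using $\rho_{\partial_{x_j}f}=\partial_{x_j}\rho_f$ to match the hypothesis), while restoring the isometry turns each $\|\partial_x^{a'}\partial_p^{b'}\partial_{x_j}f\|_X$ into $\|e^{tL}\partial_x^{a'}\partial_p^{b'}\partial_{x_j}f\|_X$, which is precisely one of the terms composing $\|e^{tL}\partial_{x_j}f\|_{X_t}$ on the right-hand side.

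The only genuinely technical accounting is the factorial. When the one extra $p$-derivative lands on the coefficient one has $\beta=\alpha+\hat e_k$ and $(\alpha+\hat e_k)!=(\alpha_k+1)\alpha!\leq(|\alpha|+1)\alpha!$, so the crude bound carries a spurious factor $(|\alpha|+1)r_0^{|\alpha|+1}$. I would absorb this by fixing any $r>r_0$: since $(|\alpha|+1)(r_0/r)^{|\alpha|}$ is bounded uniformly in $\alpha$, one gets $(|\alpha|+1)r_0^{|\alpha|+1}\leq C\,r^{|\alpha|}$, and collapsing the finitely many contributions (over $j$, the two coefficient types, and $|a+b|\leq1$) into one constant yields $\|e^{tL}\tilde L_\alpha f\|_{X_t}\leq C\alpha!r^{|\alpha|}\sum_i\|e^{tL}\partial_{x_i}f\|_{X_t}$. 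I expect the main obstacle to be not any single inequality but getting the commutator formula and its sign/derivative bookkeeping exactly right, since the entire estimate rests on the periodicity-driven fact that no $p$-derivative of $f$ beyond the one carried by $\partial_x^a\partial_p^b$ ever appears.
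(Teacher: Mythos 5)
Your proposal is correct and follows essentially the same route as the paper: prove $[L,\partial_{x_i}]=0$ directly, derive the closed form $(-1)^{|\alpha|}\tilde L_\alpha f=\partial_p^{\alpha}\nabla_p\eps\cdot\nabla_x f-U\nabla_x\rho_f\cdot\nabla_p\partial_p^{\alpha}\mathcal F_\lambda$ by induction using the periodicity cancellation $\rho_{\partial_{p_k}f}=0$, then unwind $\|\cdot\|_{X_t}$, strip and restore the isometric group $e^{tL}$, and invoke Lemma~\ref{lem.estimate.eps} with the factorial shift $(\alpha+\hat e_k)!\leq(|\alpha|+1)\alpha!$ absorbed by enlarging $r_0$ to $r$. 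Your bookkeeping of the sign, of the extra $p$-derivative always landing on the analytic coefficients rather than on $f$, and of the $e^{-\delta t}$ cancellation matches the paper's argument.
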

\begin{proof}
	The assertion $[L,\partial_{x_i}]=0$ can be obtained by a straight-forward calculation. Then
	\begin{align*}
	\partial_{p_i}&Lf(x,p)=\partial_{p_i}\left(\nabla\eps(p)\cdot\nabla_x f(x,p)-U\nabla_x\int_{\mathbb T^d}f(x,p')dp'\cdot\nabla_p\mathcal F_\lambda(p)\right)
	\\&=\partial_{p_i}\nabla\eps(p)\cdot\nabla_x f(x,p)+\nabla\eps(p)\cdot\nabla_x \partial_{p_i}f(x,p)-U\nabla_x\int_{\mathbb T^d}f(x,p')dp'\cdot\nabla_p\partial_{p_i}\mathcal F_\lambda(p)
	\end{align*}
	and
	\begin{align*}
	L\partial_{p_i}f(x,p)&=\nabla\eps(p)\cdot\nabla_x \partial_{p_i}f(x,p)-U\nabla_x\int_{\mathbb T^d}\partial_{p'_i}f(x,p')dp'\cdot\nabla_p\mathcal F_\lambda(p)
	\\&=\nabla\eps(p)\cdot\nabla_x \partial_{p_i}f(x,p)
	\end{align*}
	imply that
	\begin{equation*}
	\tilde L_{\hat e_i}:=[L,\partial_{p_i}]=-\partial_{p_i}\nabla\eps(p)\cdot\nabla_x+U\int_{\mathbb T^d}\nabla_xf(x,p')dp'\cdot\nabla_p\partial_{p_i}\mathcal F_\lambda(p). 
	\end{equation*} We see that $\tilde L_{\hat e_i}$ has a similar form to $L$. Likewise to the calculation above, we obtain 
	\begin{equation*}
	(-1)^{|\beta|}\tilde L_\beta f=\partial_p^\beta\nabla_p\eps(p)\cdot\nabla_xf-U\int_{\mathbb T^d}\nabla_xfdp'\cdot\nabla_p\partial_{p}^\beta\mathcal F_\lambda(p).
	\end{equation*}
	According to Lemma \ref{lem.estimate.eps}, this implies that
	\begin{equation*}
	\|\tilde L_\beta f\|_{X}\leq C\beta!r^{|\beta|}\|\nabla_x f\|_X\leq  C\beta!r^{|\beta|}\sum_{i=1}^d\|\partial_{x_i} f\|_X
	\end{equation*}
	 for some $C,r>0$.	Furthermore, this implies that
	 \begin{align*}
	 e^{\delta t}\| e^{tL}\tilde L_\beta f\|_{X_t}	
		 &=\sum_{|a+b|\leq 1}\|e^{tL}\partial_x^a\partial_p^b\tilde L_\beta f\|_{X}
		\\ &\leq  \sum_{|a+b|\leq 1}\|\partial_x^a\partial_p^b\tilde L_\beta f\|_{X}
		\\ &\leq  \sum_{|a+b|\leq 1}\|\tilde L_\beta \partial_x^a\partial_p^bf\|_{X}+  \sum_{|b|= 1}\|\tilde L_{\beta +b}f\|_{X}
		\\&\leq C\sum_{|\gamma|=1}\left(\beta!r_0^{|\beta|}\sum_{|a+b|\leq1}\|\partial_x^{a+\gamma}\partial_p^bf\|_X+\sum_{|b|=1}(\beta+b)!r_0^{|\beta|+1}\|\partial_x^\gamma f\|_X\right)
		\\&\leq C\sum_{|\gamma|=1}\left(\beta!r_0^{|\beta|}\sum_{|a+b|\leq1}\|e^{tL}\partial_x^{a+\gamma}\partial_p^bf\|_X+\sum_{|b|=1}(\beta+b)!r_0^{|\beta|+1}\|e^{tL}\partial_x^\gamma f\|_X\right)
	 \end{align*}
	 using that $\|e^{tL}\|\leq 1$ for all $t\in\mathbb R$. Thus for every $r_0>r$ there exists a $C_r>0$ such that
	 \begin{align*}
	  e^{\delta t}\| e^{tL}\tilde L_\beta f\|_{X_t}	
	  &\leq C_r \beta!r^{|\beta|}\sum_{|\gamma|=1}\sum_{|a+b|\leq1}\|e^{tL}\partial_x^{a}\partial_p^b\partial_x^{\gamma}f\|_X
	  \\
	  &\leq C_r \beta!r^{|\beta|}\sum_{|\gamma|=1}\sum_{|a+b|\leq1}\|e^{tL}\partial_x^{a}\partial_p^be^{-tL}e^{tL}\partial_x^{\gamma}f\|_X
	  \\&= C_r \beta!r^{|\beta|}\sum_{|\gamma|=1}e^{\delta t}\|e^{tL}\partial_x^{\gamma}f\|_{X_t}
	 \end{align*}
	showing the assertion.
\end{proof}
\begin{lemma}\label{lem20}
	Let $t\in \mathbb R$ and $f:\mathbb R^d\times\mathbb T^d\times \mathbb R\to\mathbb R$ be bounded and Lipschitz continuous in $t$ and analytic on $\mathbb R^d\times\mathbb T^d$. For $\delta\geq Cr>0$ with $C,r>0$ given by Lemma \ref{lem.commutator.tilde L}, it holds
	\begin{equation*}
	\frac{d}{dt}\|f\|_{X_t}\leq \|\partial_tf\|_{X_t}.
	\end{equation*}
\end{lemma}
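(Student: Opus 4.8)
The plan is to differentiate the defining sum of $\NN{\cdot}_{X_t}$ term by term, split off the contribution coming from the time dependence of the conjugation $e^{tL}(\cdot)e^{-tL}$, and absorb that contribution into the decaying prefactor $e^{-\delta t}$ by means of the commutator bound of Lemma \ref{lem.commutator.tilde L}. Write $h_{a,b}(t):=e^{tL}\partial_x^a\partial_p^b e^{-tL}f(\cdot,t)$ (with $f(\cdot,t)\in Y$ for each $t$ by the analyticity hypothesis), so that $\NN{f}_{X_t}=e^{-\delta t}\sum_{\N{a+b}\leq1}\NN{h_{a,b}(t)}_X$. For the static norm $\NN{\cdot}_X$ one has the elementary Dini inequality $\tfrac{d}{dt}\NN{h}_X\leq\NN{\partial_t h}_X$, coming from $\bigl|\NN{h(t+s)}_X-\NN{h(t)}_X\bigr|\leq\NN{h(t+s)-h(t)}_X$. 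Differentiating the product thus yields
\[
\frac{d}{dt}\NN{f}_{X_t}\leq -\delta e^{-\delta t}\sum_{\N{a+b}\leq1}\NN{h_{a,b}(t)}_X+e^{-\delta t}\sum_{\N{a+b}\leq1}\NN{\partial_t h_{a,b}(t)}_X .
\]

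For the second sum I would use the same commutator identity already exploited in the proof of Proposition \ref{prop.time.estimate}, namely $\partial_t\bigl(e^{tL}\partial_x^a\partial_p^b e^{-tL}\bigr)=e^{tL}[L,\partial_x^a\partial_p^b]e^{-tL}$, together with the extra term produced by the $t$-dependence of $f$, giving
\[
\partial_t h_{a,b}(t)=e^{tL}[L,\partial_x^a\partial_p^b]e^{-tL}f(\cdot,t)+e^{tL}\partial_x^a\partial_p^b e^{-tL}\partial_t f(\cdot,t).
\]
Summing the second group of terms against $e^{-\delta t}$ reproduces exactly $\NN{\partial_t f}_{X_t}$, which is the desired right-hand side. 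It therefore remains to dominate the commutator contributions, i.e.\ to show
\[
\sum_{\N{a+b}\leq1}\NN{e^{tL}[L,\partial_x^a\partial_p^b]e^{-tL}f}_X\leq\delta\sum_{\N{a+b}\leq1}\NN{h_{a,b}(t)}_X .
\]

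Here the structure of $L$ does the work: the term $(a,b)=(0,0)$ contributes $[L,\mathrm{Id}]=0$, the terms $(a,b)=(\hat e_i,0)$ contribute $[L,\partial_{x_i}]=0$ by Lemma \ref{lem.commutator.tilde L}, and only the momentum derivatives survive, with $[L,\partial_{p_i}]=\tilde L_{\hat e_i}$. Hence the left-hand side equals $\sum_{i=1}^d\NN{e^{tL}\tilde L_{\hat e_i}e^{-tL}f}_X$. Since $e^{tL}$ is a contraction (Lemma \ref{lem.L.generator}) and $\partial_{x_j}$ commutes with $e^{\pm tL}$, the plain $X$-bound $\NN{\tilde L_{\hat e_i}g}_X\leq Cr\sum_{j}\NN{\partial_{x_j}g}_X$ obtained inside the proof of Lemma \ref{lem.commutator.tilde L} (via Lemma \ref{lem.estimate.eps}) gives, with $g=e^{-tL}f$ and the dimensional factor from the sum over $i$ absorbed into $C$,
\[
\sum_{i=1}^d\NN{e^{tL}\tilde L_{\hat e_i}e^{-tL}f}_X\leq Cr\sum_{j=1}^d\NN{\partial_{x_j}f}_X = Cr\sum_{j=1}^d\NN{h_{\hat e_j,0}(t)}_X\leq Cr\sum_{\N{a+b}\leq1}\NN{h_{a,b}(t)}_X .
\]
Because $\delta\geq Cr$, this closes the estimate: the negative $-\delta$-term cancels the commutator term and $\tfrac{d}{dt}\NN{f}_{X_t}\leq\NN{\partial_t f}_{X_t}$.

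The only genuinely delicate point is the rigorous justification of the term-by-term differentiation. Under the stated regularity ($f$ bounded and Lipschitz in $t$, analytic in $(x,p)$) each $t\mapsto h_{a,b}(t)$ is absolutely continuous into $X$ with the product-rule derivative computed above valid for a.e.\ $t$, so that the Dini inequality integrates; the contraction property of $e^{tL}$ guarantees finiteness of all quantities involved. I expect this measurability/differentiability bookkeeping, rather than the algebraic estimate, to be the part that needs the most care, with the constant issue (the harmless dimensional factor folded into $C$ in $\delta\geq Cr$) being the only other thing to keep track of.
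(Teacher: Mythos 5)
Your proposal is correct and follows essentially the same route as the paper: differentiate the weighted sum defining $\NN{\cdot}_{X_t}$, use $\partial_t(e^{tL}\partial_x^a\partial_p^b e^{-tL})=e^{tL}[L,\partial_x^a\partial_p^b]e^{-tL}$ so that only the $|\beta|=1$ commutators $\tilde L_{\hat e_i}$ survive, bound these via Lemma \ref{lem.commutator.tilde L} by $Cr\sum_j\NN{\partial_{x_j}f}_X$, and absorb this with the $-\delta$ term using $\delta\geq Cr$. The paper presents the same cancellation in the equivalent form $\frac{d}{dt}\NN{f}_{X_t}+\delta\NN{f}_{X_t}\leq Cr\NN{f}_{X_t}+\NN{\partial_t f}_{X_t}$, and, like you, silently folds the dimensional factor from the sum over $i$ into the constant $C$.
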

\begin{proof}
	We can easily show that $\frac{d}{dt}\|f\|_{X}\leq \|\partial_tf\|_{X}$ is satisfied. Then the lemma is a consequence of the following calculation
	\begin{align*}
	\frac{d}{dt}\|f\|_{X_t}+\delta\|f\|_{X_t}&=\frac{d}{dt}\left(e^{-\delta t}\sum_{|\alpha+\beta|\leq1}\|e^{tL}\partial_x^\alpha\partial_p^\beta\ e^{-tL}f\|_{X}\right)+\delta\|f\|_{X_t}
	\\&\leq e^{-\delta t}\sum_{|\alpha+\beta|\leq1}\|\partial_t\left(e^{tL}\partial_x^\alpha\partial_p^\beta e^{-tL}f\right)\|_{X}
	\\&\leq e^{-\delta t}\sum_{|\alpha+\beta|\leq1}\|\left(e^{tL}[L,\partial_x^\alpha\partial_p^\beta] e^{-tL}f\right)\|_{X}
	\newln
	+ e^{-\delta t}\sum_{|\alpha+\beta|\leq1}\|e^{tL}\partial_x^\alpha\partial_p^\beta e^{-tL}\partial_tf\|_{X}
	\\&\leq e^{-\delta t}\sum_{|\beta|=1}\|e^{tL}\tilde L_\beta e^{-tL}f\|_{X}
	+ \|\partial_tf\|_{X_t},
	\end{align*}
	where, we have used that $\alpha=0$ or $\beta=0$ is fulfilled and $[L,\partial_{x_i}]=0$ according to Lemma \ref{lem.commutator.tilde L}. Let $|\beta|=1$. We apply again Lemma \ref{lem.commutator.tilde L} and see
	\begin{align*}
	\|e^{tL}\tilde L_\beta e^{-tL}f\|_{X}&\leq C_L	\|\tilde L_\beta e^{-tL}f\|_{X}\\&\leq
	Cr\sum_{i=1}^{d}\|\partial_{x_i} e^{-tL}f\|_{X}\\&\leq
	Cr\sum_{i=1}^{d}\|e^{tL}\partial_{x_i} e^{-tL}f\|_{X}.
	\end{align*}
	 Combining this with the estimate above, we obtain
	 \begin{equation*}
	 \frac{d}{dt}\|f\|_{X_t}+\delta\|f\|_{X_t}\leq Cr\|f\|_{X_t}+\|\partial_t f\|_{X_t}.
	 \end{equation*}
	 This finishes the proof assuming that $\delta\geq Cr$.
\end{proof}
\begin{lemma}\label{lem22}
	Recalling $Q_t(f)(x,p):=Ue^{-\frac{t}{\tau}}\nabla_x\int_{\mathbb T^d}f(x,p')dp'\cdot\nabla_pf(x,p)$, we have
	\begin{multline}
	\NN{e^{tL}\partial_x^\alpha\partial_p^\beta Q_t(f)}_{X_t}\\\leq UC_\lambda  e^{(2\delta-\frac{1}{\tau})t}\sum_{(\alpha',\beta')\leq (\alpha,\beta)}\binom{\alpha}{\alpha'}\binom{\beta}{\beta'}\NN{e^{tL}\partial_x^{\alpha-\alpha'}\partial_p^{\beta-\beta'}f}_{X_t} \sum _{\N{a+b}= 1}\NN{e^{tL}\partial_x^{\alpha'+a}\partial_p^{\beta'+a}f}_{X_t}.
	\end{multline}
\end{lemma}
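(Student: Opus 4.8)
\textit{Proof sketch.} The plan is to reduce the bilinear expression $Q_t$ to the scalar algebra inequality of Lemma~\ref{X.Algebra}, exploiting that $(e^{tL})_{t\in\mathbb R}$ is a contraction group by Lemma~\ref{lem.L.generator} and hence an \emph{isometry} on $X$: since $\NN{e^{sL}}_{L(X)}\le 1$ for every $s\in\mathbb R$, the identity $g=e^{-tL}e^{tL}g$ gives $\NN{g}_X\le\NN{e^{tL}g}_X\le\NN{g}_X$, so $\NN{e^{tL}g}_X=\NN{g}_X$. This freedom to insert or delete $e^{tL}$ at the level of $\NN{\cdot}_X$ is what makes the nonlinearity tractable even though $e^{tL}$ does not commute with $\partial_x,\partial_p$.

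First I would unfold the outer seminorm. By Definition~\ref{def10}, and since the $e^{-tL}$ appearing there cancels the leading $e^{tL}$,
\[
\NN{e^{tL}\partial_x^\alpha\partial_p^\beta Q_t(f)}_{X_t}=e^{-\delta t}\sum_{\N{a+b}\le1}\NN{e^{tL}\partial_x^{\alpha+a}\partial_p^{\beta+b}Q_t(f)}_X .
\]
Next I would expand $Q_t(f)=Ue^{-t/\tau}\sum_{i=1}^d(\partial_{x_i}\rho_f)(\partial_{p_i}f)$ by the Leibniz rule. Two structural facts simplify this decisively: $\rho_f$ is independent of $p$, so every $p$-derivative lands on the factor $\partial_{p_i}f$; and $\partial_x^\gamma\rho_f=\rho_{\partial_x^\gamma f}$. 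Hence
\[
\partial_x^{\alpha+a}\partial_p^{\beta+b}\big((\partial_{x_i}\rho_f)(\partial_{p_i}f)\big)=\sum_{\gamma\le\alpha+a}\binom{\alpha+a}{\gamma}\,\rho_{\partial_x^{\gamma+\hat e_i}f}\,\partial_x^{\alpha+a-\gamma}\partial_p^{\beta+b+\hat e_i}f .
\]
Applying the isometry, then Lemma~\ref{X.Algebra} with $h=\partial_x^{\gamma+\hat e_i}f$, and the isometry once more on each resulting factor, every summand is bounded by $C_\lambda\,\NN{e^{tL}\partial_x^{\gamma+\hat e_i}f}_X\,\NN{e^{tL}\partial_x^{\alpha+a-\gamma}\partial_p^{\beta+b+\hat e_i}f}_X$.

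Finally I would re-assemble these $\NN{\cdot}_X$-products into the two seminorms $\NN{\cdot}_{X_t}$ on the right-hand side. The factor $\partial_x^{\gamma+\hat e_i}f$ carrying the extra $x$-derivative $\hat e_i$ produced by $\nabla_x\rho_f$ is matched to the bumped factor $\sum_{\N{a+b}=1}\NN{e^{tL}\partial_x^{\alpha'+a}\partial_p^{\beta'+b}f}_{X_t}$ by setting $\alpha'=\gamma$ and using $\hat e_i$ as the forced unit bump, while the transport factor is absorbed into $\NN{e^{tL}\partial_x^{\alpha-\alpha'}\partial_p^{\beta-\beta'}f}_{X_t}$; crucially, the single free derivative built into each $\NN{\cdot}_{X_t}$ (the one-derivative sum in Definition~\ref{def10}) provides the slack that hosts the remaining $\nabla_p$-bump $\hat e_i$ and the outer bump $(a,b)$. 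Since the $p$-derivatives do not split, the coefficient $\binom{\beta}{\beta'}$ enters only trivially and the $\beta'$-sum merely over-estimates (harmlessly, as all terms are nonnegative); the $x$-coefficient $\binom{\alpha+a}{\gamma}$ is reorganized into $\binom{\alpha}{\alpha'}$ by Pascal's rule once the outer $a$ is shifted into the unfolding sums. The two factors $e^{-\delta t}$ coming from the two $X_t$-seminorms are (more than) compensated by the prefactor $e^{2\delta t}$, leaving exactly the $Ue^{-t/\tau}C_\lambda$ of the direct bound together with a harmless surplus $e^{-\delta t}\le1$, which yields the claimed exponent $e^{(2\delta-1/\tau)t}$. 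I expect the main obstacle to be precisely this last reorganization: a short case distinction on whether the outer derivative $(a,b)$ lies in $x$ or $p$ and on whether it attaches to the density factor or the transport factor is needed to confirm that the one-derivative slack of each $X_t$-seminorm simultaneously accommodates both the $\nabla_x$- and $\nabla_p$-bumps while respecting the constraints $\N{a+b}\le1$ and $\N{a+b}=1$, so that no Leibniz term is left uncovered and the binomial factors match.
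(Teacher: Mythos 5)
Your proposal is correct and follows essentially the same route as the paper's proof: Leibniz expansion exploiting that $\rho_f$ is $p$-independent, the isometry of the contraction group $e^{tL}$ to insert and remove the propagator around Lemma~\ref{X.Algebra}, and the one-derivative slack built into $\NN{\cdot}_{X_t}$ to absorb the bumps from $\nabla_x\rho_f$, $\nabla_p f$ and the outer $(a,b)$, with the same bookkeeping of the $e^{\pm\delta t}$ factors. The only difference is organizational (you unfold the seminorm before applying Leibniz to $\partial^{\alpha+a}_x\partial^{\beta+b}_p$ at once, whereas the paper splits $(\alpha,\beta)$ first and distributes $(a,b)$ afterward), which is immaterial.
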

\begin{proof}
	Let $\rho_f:= \int_{\mathbb T^d} fdp$. We directly estimate using the Leibnitz rule that 
	\begin{align*}
		\NN{e^{tL}\partial_x^\alpha\partial_p^\beta \nabla_x\rho_f\cdot\nabla_p f}_{X_t}&\leq 
		\sum_{\alpha'\leq \alpha}\binom{\alpha}{\alpha'}\NN{e^{tL}\left(\partial_x^{\alpha-\alpha'}\nabla_x \rho_f\cdot\partial_x^{\alpha'}\partial_p^{\beta}\nabla_pf\right)}_{X_t}.
	\end{align*}
	Let $\alpha',\alpha''\geq0$ and $\beta'\geq0$. By the definition of $\|\cdot\|_{X_t}$, we have
	\begin{align*}
	e^{\delta t}&\NN{e^{tL}\left(\partial_x^{\alpha''}\nabla_x \rho_f\cdot\partial_x^{\alpha'}\partial_p^{\beta'}\nabla_pf\right)}_{X_t}
	\\&\leq\sum_{|\gamma|=1}\sum_{|a+b|\leq 1}
	\NN{e^{tL}\partial_x^a\partial_p^b\left(\partial_x^{\alpha''+\gamma}\partial_p^{\beta''} \rho_f\partial_x^{\alpha'}\partial_p^{\beta'+\gamma}f\right)}_{X}
	\\&\leq\sum_{|\gamma|=1}\sum_{|a|\leq 1}
	\NN{e^{tL}\left(\partial_x^{\alpha''+a+\gamma} \rho_f\partial_x^{\alpha'}\partial_p^{\beta'+\gamma}f\right)}_{X}
	\newln
	+\sum_{|\gamma|=1}\sum_{|a+b|\leq 1}\NN{e^{tL}\left(\partial_x^{\alpha''+\gamma}\rho _f\partial_x^{\alpha'+a}\partial_p^{\beta'+b+\gamma}f\right)}_{X}.
	\end{align*}
	Now, we can use that $(e^{tL})$ is a strongly continuous contraction group with $\|e^{tL}\|\leq 1$ for all $t\in\mathbb R$ implying
	\begin{align*}
		\|e^{tL}(\rho_hg)\|_{X}&\leq \|\rho_hg\|_{X}\leq C_\lambda \|h\|_{X}\|g\|_{X}\leq C_\lambda 	\|e^{tL}h\|_{X}	\|e^{tL}g\|_{X}	\end{align*} 
		for all $h,g\in X$ using Lemma \ref{X.Algebra}. 
	Thus,
	\begin{align*}
	\sum_{|\gamma|=1}&\sum_{|a|\leq 1}
	\NN{e^{tL}\left(\partial_x^{\alpha''+a+\gamma} \rho_f\partial_x^{\alpha'}\partial_p^{\beta'+\gamma}f\right)}_{X}
	\\&\leq 
	C_\lambda  \sum_{|\gamma|=1}\sum_{|a|\leq 1}
	\NN{e^{tL}\partial_x^{\alpha''+a+\gamma} f\|_{X}\|e^{tL}\partial_x^{\alpha'}\partial_p^{\beta'+\gamma}f}_{X}
	\\&\leq C_\lambda e^{2\delta t}\sum_{|\gamma|=1}\|e^{tL}\partial_x^{\alpha''+\gamma}f\|_{X_t}\|e^{tL}\partial_x^{\alpha'}\partial_p^{\beta'}f\|_{X_t}.
	\end{align*}
	Likewise,
	\begin{align*}
	\sum_{|\gamma|=1}&\sum_{|a+b|\leq 1}\NN{e^{tL}\left(\partial_x^{\alpha''+\gamma}\rho _f\partial_x^{\alpha'+a}\partial_p^{\beta'+b+\gamma}f\right)}_{X}\\&\leq 
	C_\lambda e^{2\delta t}\sum_{|\gamma|=1}\|e^{tL}\partial_x^{\alpha''}f\|_{X_t}\|e^{tL}\partial_x^{\alpha'}\partial_p^{\beta'+\gamma}f\|_{X_t}.
	\end{align*}
	Combining both estimates ensures the assertion.
\end{proof}
With the same arguments, we can easily show the following lemma concerning the desired Lipschitz estimate.
\begin{lemma}\label{lem23}
	Recalling $Q_t(f)(x,p):=Ue^{-\frac{t}{\tau}}\nabla_x\int_{\mathbb T^d}f(x,p')dp'\cdot\nabla_pf(x,p)$, we have
	\begin{multline}
	\NN{e^{tL}\partial_x^\alpha\partial_p^\beta Q_t(f-g)}_{X_t}\leq UC_\lambda  e^{(2\delta-\frac{1}{\tau})t}\sum_{(\alpha',\beta')\leq (\alpha,\beta)}\binom{\alpha}{\alpha'}\binom{\beta}{\beta'}\times
	\\
	\times\bigg( \NN{e^{tL}\partial_x^{\alpha-\alpha'}\partial_p^{\beta-\beta'}(f-g)}_{X_t} \sum _{\N{a+b}= 1}\NN{e^{tL}\partial_x^{\alpha'+a}\partial_p^{\beta'+a}f}_{X_t}
	\\+\NN{e^{tL}\partial_x^{\alpha-\alpha'}\partial_p^{\beta-\beta'}g}_{X_t} \sum _{\N{a+b}= 1}\NN{e^{tL}\partial_x^{\alpha'+a}\partial_p^{\beta'+a}(f-g)}_{X_t}\bigg).
	\end{multline}
\end{lemma}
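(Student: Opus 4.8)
The plan is to reduce the estimate to the bilinear version of Lemma \ref{lem22}. Introduce the bilinear operator
\[
\tilde Q_t(h,k):=Ue^{-\frac{t}{\tau}}\nabla_x\rho_h\cdot\nabla_p k,\qquad \rho_h:=\int_{\T^d}h(x,p')\,dp',
\]
so that $Q_t(f)=\tilde Q_t(f,f)$ and, by the elementary polarization identity,
\[
Q_t(f)-Q_t(g)=\tilde Q_t(f,f)-\tilde Q_t(g,g)=\tilde Q_t(f-g,f)+\tilde Q_t(g,f-g).
\]
Here $Q_t(f-g)$ in the statement is shorthand for $Q_t(f)-Q_t(g)$, exactly as $Q(y)-Q(x)$ in (H3b). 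Hence it suffices to bound $\NN{e^{tL}\partial_x^\alpha\partial_p^\beta\tilde Q_t(h,k)}_{X_t}$ for arbitrary $h,k\in Y$ and then to substitute $(h,k)=(f-g,f)$ and $(h,k)=(g,f-g)$ and add the two contributions.

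The key point is that the computation carried out in the proof of Lemma \ref{lem22} never uses that the two arguments of $\tilde Q_t$ coincide: it is separately linear in the density slot and in the transport slot. Indeed, since $\rho_h$ is independent of $p$, one has $\partial_p^\beta(\nabla_x\rho_h\cdot\nabla_p k)=\nabla_x\rho_h\cdot\partial_p^\beta\nabla_p k$; the Leibniz rule in $x$ distributes the $\alpha$-derivatives between $\rho_h$ and $k$; and unfolding the definition of $\NN{\cdot}_{X_t}$ together with the bilinear algebra estimate $\NN{e^{tL}(\rho_h w)}_X\leq C_\lambda\NN{e^{tL}h}_X\NN{e^{tL}w}_X$ of Lemma \ref{X.Algebra} yields, verbatim with the same constants,
\begin{multline*}
\NN{e^{tL}\partial_x^\alpha\partial_p^\beta\tilde Q_t(h,k)}_{X_t}\leq UC_\lambda e^{(2\delta-\frac{1}{\tau})t}\sum_{(\alpha',\beta')\leq(\alpha,\beta)}\binom{\alpha}{\alpha'}\binom{\beta}{\beta'}\\
\times\NN{e^{tL}\partial_x^{\alpha-\alpha'}\partial_p^{\beta-\beta'}h}_{X_t}\sum_{\N{a+b}=1}\NN{e^{tL}\partial_x^{\alpha'+a}\partial_p^{\beta'+a}k}_{X_t},
\end{multline*}
where the $h$-factor originates from the density slot and the $k$-factor from the transport slot.

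Adding the bound for $\tilde Q_t(f-g,f)$, which contributes the terms with $f-g$ in the density slot and $f$ in the transport slot, to the bound for $\tilde Q_t(g,f-g)$, which contributes $g$ in the density slot and $f-g$ in the transport slot, reproduces exactly the asserted right-hand side; the exponential prefactor $e^{(2\delta-\frac{1}{\tau})t}$ and the constant $UC_\lambda$ are inherited unchanged. No ingredient beyond Lemma \ref{lem22}, Lemma \ref{X.Algebra} and the polarization identity is needed. The only point requiring care — and the reason one cannot simply invoke Lemma \ref{lem22} as a black box — is to confirm that every estimate in its proof is genuinely linear in each argument separately; this holds because the density $\rho_h$ enters solely through the algebra bound of Lemma \ref{X.Algebra} and never through the $p$-derivatives, so the two slots decouple cleanly.
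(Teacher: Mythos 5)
Your proposal is correct and follows essentially the same route the paper intends: the paper's proof of Lemma \ref{lem23} is just the remark that it follows ``with the same arguments'' as Lemma \ref{lem22}, which amounts precisely to your observation that the estimate there is separately linear in the density and transport slots, combined with the polarization identity $\tilde Q_t(f,f)-\tilde Q_t(g,g)=\tilde Q_t(f-g,f)+\tilde Q_t(g,f-g)$ (the same device used in the paper's Example on bilinear $\tilde Q$). Your reading of $Q_t(f-g)$ as $Q_t(f)-Q_t(g)$ is the intended one, since the lemma is stated as the Lipschitz estimate needed for (H3b').
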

\begin{theorem}\label{thm24}
	Let $C,r$ be as in Lemma \ref{lem.commutator.tilde L} and $\delta=Cr$.  Then for every positive $\nu_0<\frac1r$, there exist  $\varepsilon>0$ and $\tau_0\in(0,1/(2Cr))$ such that if 
	\begin{equation}\label{con.on.g_0.2}
	\|g_0\|_{X_0^{\nu}}\leq \varepsilon \nu
	\end{equation} 
	for some $\nu\leq \nu_0$, then \eqref{be.for.g1} has a classical and analytic solution $g$ with $g|_{t=0}=g_0$, with
	\begin{equation*}
	\|g(t)\|_{X_t^{\nu \exp({-\frac t\tau})}}\leq 2\varepsilon\nu\quad\mbox{for all }t\geq0.
	\end{equation*}
	Moreover, for all $g_0,h_0\in Y$ satisfying \eqref{con.on.g_0.2}, we have 
	\begin{equation*}
	\|g(t)-h(t)\|_{X_t^{\nu \exp(-\frac t\tau)}}\leq 2\|g_0-h_0\|_{X_0^{\nu}}\quad\mbox{for all }t\geq0,
	\end{equation*}
	where $g,h$ are the solution of \eqref{be.for.g1} with $g(0)=h_0$ and $g(0)=h_0$, respectively.
\end{theorem}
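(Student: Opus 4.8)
The plan is to exhibit \eqref{be.for.g2} as a concrete instance of the abstract transformed equation \eqref{d_tu=Q_t} and then to invoke Theorem \ref{main.thm1}. Accordingly, I would take $n=2d$, the commuting family $A=(\partial_{x_1},\dots,\partial_{x_d},\partial_{p_1},\dots,\partial_{p_d})$, the generator $L$, and the time-dependent norm $\|\cdot\|_{X_t}$ of Definition \ref{def10} with the choice $\delta=Cr$, where $C,r$ are the constants furnished by Lemma \ref{lem.commutator.tilde L}. The first task is to check that the modified hypotheses (H2')--(H4') hold here. Hypothesis (H2') is exactly Lemma \ref{lem.commutator.tilde L}: since $[L,\partial_{x_i}]=0$ and all the operators $\tilde L_\beta$ have coefficients depending only on $p$, the iterated commutators $L_\alpha$ vanish whenever $\alpha$ has a nonzero $x$-component, so only the $p$-directional commutators survive and are controlled by the $x$-derivatives (and $\sum_{i=1}^d\le\sum_{i=1}^{2d}$). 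Hypothesis (H4') is Lemma \ref{lem20}, which is precisely why $\delta\ge Cr$ is imposed. Finally, (H3a') and (H3b') are Lemmas \ref{lem22} and \ref{lem23}; reading off their right-hand sides identifies the decay rate $\omega=\tfrac1\tau-2\delta=\tfrac1\tau-2Cr$ together with $M_0=M_0'=0$ and $M_1=M_1'=UC_\lambda$.

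With the hypotheses in hand, the remaining work is to arrange the quantitative constraints. I would fix $\nu_0<1/r$ once and for all and then choose $\tau_0$ small enough that, for every $\tau\in(0,\tau_0)$, the rate $\omega=\tfrac1\tau-2Cr$ exceeds $\mu_0=\frac{nCr}{(1-\nu_0r)^n}$; equivalently, and this is exactly the admissibility condition of Theorem \ref{main.thm1}, that $\nu_0<\frac1r(1-\sqrt[n]{nCr/\omega})$. This is possible because $\omega\to\infty$ as $\tau\to0$, so $\sqrt[n]{nCr/\omega}\to0$ and the admissible interval for $\nu_0$ expands to $(0,1/r)$; in particular $\tau_0<1/(2Cr)$ is forced merely to keep $\omega>0$. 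Applying Theorem \ref{main.thm1} to \eqref{be.for.g2}, after the change of unknown $u=e^{tL}g$ that turns it into \eqref{d_tu=Q_t}, then yields for data with $\|g_0\|_{X_0^\nu}\le\varepsilon\nu$ and $\nu\le\nu_0$ a strong solution together with the bound $\|u(t)\|_{X_t^{\nu e^{-\omega t}}}\le2\varepsilon\nu$ and the companion Lipschitz estimate.

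Finally I would translate the conclusion back to $g=e^{-tL}u$ and clean up the statement. Since $\delta\ge0$ gives $\omega\le1/\tau$, one has $\nu e^{-t/\tau}\le\nu e^{-\omega t}$, and because the semi-norm $\|\cdot\|_{X_t^\lambda}$ is monotone increasing in the radius $\lambda$, the abstract bound immediately implies the slightly weaker but cleaner estimate $\|g(t)\|_{X_t^{\nu\exp(-t/\tau)}}\le2\varepsilon\nu$ claimed in the theorem; the continuous-dependence inequality follows in the same way from the abstract Lipschitz estimate. That the solution is classical and analytic is a consequence of the finiteness of the Gevrey-type norm with strictly positive radius $\nu e^{-t/\tau}$, which dominates the factorially weighted sum of all derivatives and hence forces a positive radius of convergence of the Taylor series in $(x,p)$ and, with the strong-solution property, the pointwise validity of the equation. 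I expect the genuine difficulty of the whole argument to lie in the estimates (H3a')--(H3b') for the singular nonlinearity $Q_t(g)=Ue^{-t/\tau}\nabla_x\rho_g\cdot\nabla_pg$, which lose one derivative and are the reason the ordinary norm $\|\cdot\|_X$ is useless; those, however, are already supplied by Lemmas \ref{lem22}--\ref{lem23}, so at the level of the theorem itself the only real bookkeeping is reconciling the growth factor $e^{2\delta t}$ produced by the algebra estimate of Lemma \ref{X.Algebra} against the gain $e^{-t/\tau}$, which is exactly what the smallness of $\tau$ buys.
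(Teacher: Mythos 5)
Your proposal is correct and follows essentially the same route as the paper: verifying (H2')--(H4') via Lemmas \ref{lem.commutator.tilde L}, \ref{lem20}, \ref{lem22} and \ref{lem23} with $M_0=M_0'=0$, $M_1=M_1'=UC_\lambda$ and $\omega=\frac1\tau-2\delta$, choosing $\tau_0$ so that the admissibility condition $\nu_0<\frac1r(1-\sqrt[2d]{2dCr/\omega})$ holds uniformly for $\tau\le\tau_0$, applying Theorem \ref{main.thm1}, and passing from radius $\nu e^{-\omega t}$ to the smaller radius $\nu e^{-t/\tau}$ by monotonicity of the Gevrey norm in the radius. No gaps of substance.
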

\begin{proof}
	According to Lemma \ref{lem.commutator.tilde L}, (H2') is satisfied. Moreover, Lemma \ref{lem20} yields (H4'). We set $M_0:=M'_0:=0$ and define $M_1:=M'_1:=UC_\lambda$, where $C_\lambda>0$ is given by Lemma \ref{X.Algebra}. Given $\nu_0<1/r$, we choose $\omega_0>\frac{2dCr}{(1-r\nu_0)^{2d}}$ and set $\tau_0:=\frac1{\omega_0+2\delta}<\frac{1}{2Cr}$. By Lemmata \ref{lem22} and \ref{lem23}, we obtain (H3a') and (H3b') with  $\tau\geq\tau_0$ for $
	\omega:=\omega(\tau):=\frac1\tau-2\delta$
	for every $\tau\leq\tau_0$. Note that $\omega\geq\frac1{\tau_0}-2\delta= \omega_0$. Thus, \begin{equation}\label{inproof42}
	\nu_0<\frac1r\left(1-\sqrt[2d]{\frac{2dCr}{\omega_0}}\right)\leq\frac1r\left(1-\sqrt[2d]{\frac{2dCr}{\omega(\tau)}}\right)
	\end{equation} for all $\tau\leq\tau_0$. Therefore, we can apply Theorem \ref{main.thm1} and obtain the assertion using that $\frac1\tau\geq \omega$. The solution is indeed classical, because $g$ is differentiable in $t$ and analytic in $x$ and $p$. One can moreover easily show by an bootstrap argument that $g$ is also analytic in $t$.  Note that $\varepsilon$ does not depend on $\tau$ because $\frac1r(1-\sqrt[2d]{\frac{2dCr}{\omega(\tau)}})$ is uniformly bounded from below by a constant greater than $\nu_0$ because of \eqref{inproof42}.  
\end{proof}
\begin{proof}[Proof of Theorem \ref{thm.diracbenny1}]
	For $C,r>0$ as in Lemma \ref{lem.commutator.tilde L}, $\nu_0<\frac1r$, let  $\varepsilon>0$ and $\tau_0\in(0,1/(2Cr))$  be given by Theorem \ref{thm24}. For any $\nu\leq\nu_0$, assume that $f_0$ satisfies 
	\begin{align*}
	\|f_0-\F_\lambda\|_{X_0^{\nu}}\leq \varepsilon \nu.
	\end{align*}
	Due to Theorem \ref{thm24}, \eqref{be.for.g1} has a analytic  solution $g$ with $g|_{t=0}=f_0-\F_\lambda$, with
	\begin{equation*}
	\|g(t)\|_{X_t^{\nu \exp({-\frac t\tau})}}\leq 2\varepsilon\nu\quad\mbox{for all }t\geq0.
	\end{equation*}
	Then  $f(t):=e^{-\frac{t}{\tau}}g(t)+\F_\lambda$
	solve the original problem \eqref{2.be} and satisfies $f(0)=f_0$. Moreover, it holds
		\begin{align*}
		\|f(t)-\F_\lambda\|_{X_t^{\nu \exp({-\frac t\tau})}}&= e^{-\frac t\tau}
		\|g(t)\|_{X_t^{\nu \exp({-\frac t\tau})}} \leq2\varepsilon\nu e^{-\frac t\tau}
		\end{align*}
		for all $t\geq0$. By Definition \ref{def10}, we have that $\|\cdot\|_{X_t^\nu}=e^{-\delta t}\|\cdot\|_{Y_t^\nu}$ for $t>0$ and especially  $\|\cdot\|_{X_0^\nu}=\|\cdot\|_{Y_0^\nu}$. Theorem \ref{thm24} entails that $\delta=Cr\leq1/(2\tau_0)\leq1/\tau_0$. Thus,
		\begin{align*}
		\|f(t)-\F_\lambda\|_{Y_t^{\nu \exp({-\frac t\tau})}}&
		\leq e^{\delta t}\|f(t)-\F_\lambda\|_{Y_t^{\nu \exp({-\frac t\tau})}}
		\leq e^{\frac t{\tau_0}}\|f(t)-\F_\lambda\|_{Y_t^{\nu \exp({-\frac t\tau})}}\leq2\varepsilon\nu e^{-\big(\frac t\tau-\frac{t}{\tau_0}\big)}.
		\end{align*}
		Likewise,
		\begin{equation*}
		\|f(t)-\tilde f(t)\|_{Y_t^{\nu \exp({-\frac t{\tau}})}}\leq 2e^{-(\frac1\tau-\frac{1}{\tau_0}) t}\|f_0-\tilde f_0\|_{X_0^{\nu}}= 2e^{-(\frac1\tau-\frac{1}{\tau_0}) t}\|f_0-\tilde f_0\|_{Y_0^{\nu}}\quad\mbox{for all }t\geq0
		\end{equation*}
		where $f,\tilde f$ are the solution of \eqref{2.be} with $f(0)=f_0$ and $\tilde f(0)=\tilde f_0$, respectively.
\end{proof}
\begin{proof}[Proof of Theorem \ref{thm1}]
	Theorem \ref{thm1} is actually a direct corollary of Theorem \ref{thm.diracbenny1}. We only need to apply the following two properties. 
	
	First, for $\mu<\nu$ there exists a constant $C_{\nu,\mu}>0$ such that
	\[\|h\|_{Y_0^{\mu}}\leq C_{\nu,\mu} \sum_{\alpha,\beta\in\mathbb N_0}\frac{\nu^{{\alpha+\beta}}}{\alpha!\beta!}\|\partial_x^\alpha\partial_p^\beta h \|_{X}\]
	for all $h\in Y$, which was proved in \cite{Bra18} Lemma 2.3 and originates from \cite{MoVi11}. Second, 
	\begin{equation*}
	\|h\|_{X}\leq\|h\|_{Y_0^{0}}\leq\|h\|_{Y_0^{\nu}}.
	\end{equation*}
	for all $\nu\geq0$ and all $h\in Y$.
\end{proof}

\end{document}